\theoremstyle{plain}
\newtheorem{Theorem}{Theorem}[section]
\newtheorem{Lemma}[Theorem]{Lemma}
\newtheorem{Corollary}[Theorem]{Corollary}
\newtheorem{Proposition}[Theorem]{Proposition}
\theoremstyle{remark}
\newtheorem{Remark}[Theorem]{Remark}
\newtheorem{Example}[Theorem]{Example}
\newcommand{\hhb}[1]{{\hbox to#1pt{}}}
\newcommand{\mfp}{\mathfrak{p}}
\newcommand{\mfP}{\mathfrak{P}}
\newcommand{\mfQ}{\mathfrak{Q}}
\newcommand{\mcS}{\mathcal{S}}
\address{Jeremiah Horrocks Institute, University of Central Lancashire, Preston PR1 2HE, United Kingdom}
\email{sanscombe@uclan.ac.uk}
\address{Afdeling Algebra, KU Leuven, Celestijnenlaan 200b, 3001 Leuven, Belgium}
\email{philip.dittmann@kuleuven.be}
\address{Institut f\"{u}r Algebra, Technische Universit\"{a}t Dresden, 01062 Dresden, Germany}
\email{arno.fehm@tu-dresden.de}
\begin{document}

\title{Denseness results in the theory of algebraic fields}
\author{Sylvy Anscombe, Philip Dittmann and Arno Fehm}
\thanks{\today}
\maketitle

\begin{abstract}
We study when the property that a field is dense in its real and $p$-adic closures is elementary in the language of rings
and deduce that all models of the theory of algebraic fields 
have this property.
\end{abstract}

\section{Introduction}

The completions of the field of rational numbers are $\mathbb{R}$ and the $\mathbb{Q}_p$ for prime numbers $p$,
and completions of number fields are accordingly finite extensions of $\mathbb{R}$ and $\mathbb{Q}_p$.
The importance of these completions in number theory (for example in the form of local-global principles)
and the success of local algebra has motivated 
the attempt to apply these concepts also to other fields.
Here the problem arises that the completion of an ordered field or a (suitably) valued field
need in general not share any of the crucial properties of $\mathbb{R}$ or the finite extensions of the $\mathbb{Q}_p$.
This has led to the definitions of real closures and $p$-adic closures 
(of $p$-rank $1$, corresponding to $\mathbb{Q}_p$, and of higher $p$-rank corresponding to finite extensions of $\mathbb{Q}_p$)
of arbitrary fields with orderings respectively so-called $p$-valuations.
But here the converse problem arises: These closures are not complete, but do they at least satisfy other crucial properties 
of the completions - in particular, is the field dense in its closures?

This question has been answered positively for several classes of fields, in particular for fields that satisfy a local-global principle for rational points on varieties:
Prestel \cite{Prestel} shows that pseudo-real closed (PRC) fields are dense in all their real closures,
and the analogue for pseudo-$p$-adically closed (P$p$C) fields was proven in \cite{Grob}.
The most general result in this direction is by Pop \cite{Pop},
who proves that arbitrary so-called pseudo classically closed (PCC) fields are dense in all their real closures and all their $p$-adic closures (of arbitrary $p$-rank).
Ershov \cite{Ershov} has a similar result for so-called ${\rm RC}_\pi$-fields.

In this note we study when denseness in real and $p$-adic closures is an elementary property.
We show that this is not always the case (Section \ref{sec:counterexample}),
but we also prove some general principles that allow us to exhibit a large class of fields dense in all their real and $p$-adic closures,
and which likely includes all the classes listed in the previous paragraph:

\begin{Theorem}\label{thm:intro}
Every model of the theory of algebraic fields of characteristic zero
is dense in all its real closures and all its $p$-adic closures {\rm(}of arbitrary $p$-rank{\rm)}.
Equivalently (see Proposition \ref{prop:equiv_denseness_completions}), the completion of any such model with respect to any ordering is real closed, and the completion of any such model with respect to any $p$-valuation is $p$-adically closed.
\end{Theorem}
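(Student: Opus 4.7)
The plan is to prove the theorem by reducing to the algebraic case via elementary equivalence. First, I would verify the statement for algebraic extensions of $\mathbb{Q}$ in characteristic zero: if $K \subseteq \bar{\mathbb{Q}}$ is such a field and $<$ is any ordering on $K$, then the real closure $R$ of $(K,<)$ is algebraic over $\mathbb{Q}$, so every element of $R$ is bounded by an integer in $<$, whence $R$ is archimedean and order-embeds into $\mathbb{R}$; since $\mathbb{Q} \subseteq K$ is dense in $\mathbb{R}$, $K$ is dense in $R$. For a $p$-valuation on $K$ of any $p$-rank $d$, the analogous argument embeds the $p$-adic closure into a finite extension of $\mathbb{Q}_p$ of $p$-rank $d$, and density of $K$ in that completion reduces to density of the prime field.

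The harder step is to transfer this property from the algebraic case to all models of $\mathrm{Th}(\text{algebraic fields})$. The plan is to isolate a first-order axiom scheme $T \subseteq \mcLr$ that (i) implies denseness in all real and $p$-adic closures, and (ii) is satisfied by every algebraic field. For (i), in the real case I would use Sturm's theorem to encode in a quantifier-free way the number of roots of $f \in K[X]$ in an interval $(a,b)$ relative to a choice of sign pattern on a Sturm chain for $f$; the axioms then assert that for every such sign pattern arising from some ordering and every $\epsilon > 0$, any root of $f$ in the corresponding real closure can be localized in an interval of length $<\epsilon$ with endpoints in $K$. For $p$-adic closures of given $p$-rank, Hensel's lemma plays the analogous role, detecting roots inside $p$-adic balls by finitely many coefficient conditions.

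The main obstacle is that orderings and $p$-valuations are not first-order objects in $\mcLr$, so $T$ cannot quantify over them directly. I would address this by phrasing each instance of the scheme in terms of finite tuples of parameters from $K$ encoding the relevant sign or valuation data on a finite list of polynomials; consistency of such data with some ordering (resp.\ $p$-valuation) is itself first-order expressible, so the whole scheme is purely in $\mcLr$. Since any ordering restricts to such data on any finite set, every instance yields denseness in the corresponding closure. Step~1 then shows that every algebraic field satisfies $T$, so $T$ lies in $\mathrm{Th}(\text{algebraic fields})$ and every model of the latter is dense in all its real and $p$-adic closures. The completion reformulation follows from Proposition~\ref{prop:equiv_denseness_completions}.
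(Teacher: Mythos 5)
Your overall architecture (verify the claim for algebraic fields, then isolate a first-order scheme $T\subseteq{\rm Th}(\mathcal{A})$ that implies denseness) is the same as the paper's, and your first step is fine. But there are two genuine gaps, both located exactly where the paper does its real work. First, your claim that ``consistency of such data with some ordering (resp.\ $p$-valuation) is itself first-order expressible'' is false for general fields. By Artin--Schreier, compatibility of a finite sign condition with an ordering amounts to $-1$ not lying in the preordering generated by the given elements, and this involves sums of squares of unbounded length; it becomes first-order only when the Pythagoras number is finite (and the $p$-adic analogue only when the $p$-Pythagoras number is finite). This is not a removable technicality: Section \ref{sec:counterexample} of the paper constructs a field $K_\infty$ with $\pi_\infty(K_\infty)=\infty$ that is dense in all its real closures but has an elementary extension that is not, so no $\mathcal{L}_{\rm ring}$-scheme of the kind you describe can both follow from and imply denseness for arbitrary fields. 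The paper escapes this by invoking Siegel's theorem and its recent $p$-adic analogue \cite{ADF2} to get a \emph{uniform} bound on the ($p$-)Pythagoras number over all algebraic fields, which makes the holomorphy domain $R_\mfp^\tau(F)=\bigcap_{\mfP}\mathcal{O}_\mfP$ existentially definable in $\mathcal{L}_{\rm ring}$ uniformly across all models of ${\rm Th}(\mathcal{A})$. Your proposal never identifies this input, and without it the scheme $T$ does not live in $\mathcal{L}_{\rm ring}$.

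Second, even granting definability of positivity, your axioms quantify over an $\epsilon$ and an interval ``in the corresponding real closure,'' but a finite sign pattern is compatible with many orderings whose real closures locate the roots of $f$ differently; a first-order sentence must produce a single approximating element working simultaneously for all primes extending the given finite data. This passage from the pointwise condition ${\rm D}_S$ to the uniform condition ${\rm UD}_S$ is Proposition \ref{prop:DimpliesUD}, and it is substantive: it uses the compactness of the relevant spaces of primes, the pairwise independence of primes established in Section \ref{sec:independence}, and the approximation theorems of \cite{ADF1}. Your proposal does not address this uniformization at all, and in the $p$-adic case it also glosses over the additional bookkeeping the paper needs (relative types, uniformizers, residue generators, and the axiomatization of value groups being $\mathbb{Z}$-groups) with a reference to Hensel's lemma. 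As written, the proof does not go through.
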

Here, by an algebraic field of characteristic zero we mean an algebraic extension of $\mathbb{Q}$.
Note that this result applies in particular to elementary extensions of number fields
and even non-standard number fields in the most general sense (see \cite{Cherlin}).
By results of Jarden \cite{Jarden, Jarden2} every PRC field and every P$p$C field is a model of the theory of algebraic fields
and this suggests that also arbitrary PCC fields might be of that kind.
The only other result about models of the theory of algebraic fields we are aware of 
concerns their cohomological dimension \cite{Chatzidakis}.
To appreciate the non-triviality of Theorem \ref{thm:intro}
note that algebraic fields are dense in their henselization with respect to {\em any} valuation (not just with respect to $p$-valuations),
which however no longer holds true for arbitrary models of the theory of algebraic fields, see Remark \ref{rem:dense_henselization}.

While Theorem \ref{thm:intro} is interesting for its own sake,
our motivation for it came from a concrete application in the area of definable valuation rings based on \cite{AnscombeFehm},
as we explain in Section \ref{sec:algebraic}. As one consequence we obtain:

\begin{Corollary}
Let $F$ be an algebraic extension of $\mathbb{Q}$ properly contained in the algebraic part of $\mathbb{R}$ or of some $\mathbb{Q}_p$.
Then there exists an existential formula $\phi(x)$ in the language of rings which defines in $F((t))$ the valuation ring $F[[t]]$.
Equivalently, there exists a polynomial $g\in\mathbb{Z}[X_1,\dots,X_n]$ such that the projection of the zero set of $g$ in $F((t))$ onto the first coordinate is precisely $F[[t]]$.
\end{Corollary}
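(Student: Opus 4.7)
My plan is to derive this as a direct consequence of Theorem~\ref{thm:intro} combined with the existential-definability transfer principle from \cite{AnscombeFehm} that is the subject of Section~\ref{sec:algebraic}. The hypothesis that $F$ is an algebraic extension of $\mathbb{Q}$ already makes $F$ tautologically a model of the theory of algebraic fields of characteristic zero, while the stipulation that $F$ is \emph{properly} contained in the algebraic part of $\mathbb{R}$ (respectively of some $\mathbb{Q}_p$) simultaneously equips $F$ with an ordering (resp.\ a $p$-valuation) inherited from the embedding and ensures that the resulting real (resp.\ $p$-adic) closure of $F$ is a proper extension. Invoking Theorem~\ref{thm:intro} then produces exactly the model-theoretic input one wants: $F$ is dense in some proper real or proper $p$-adic closure.

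With this in hand, I would next invoke the main application developed in \cite{AnscombeFehm} and recalled in Section~\ref{sec:algebraic}: for any characteristic-zero field $F$ that is dense in some proper real closure or some proper $p$-adic closure, the $t$-adic valuation ring $F[[t]]$ is existentially $\emptyset$-definable in $F((t))$. The internal machinery there packages the density hypothesis into a diophantine test for $v_t(x)\ge 0$ via a Kochen-style operator (in the $p$-adic case) or a sum-of-squares construction (in the real case), in each case producing an explicit existential $\mcLr$-formula $\phi(x)$. Composing with Step~1 yields the required $\phi$.

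The equivalence between such an existential $\mcLr$-formula without parameters and the projection of the zero set of a single $g\in\mathbb{Z}[X_1,\dots,X_n]$ is a standard manipulation: one writes $\phi$ in disjunctive normal form, replaces each inequation $q\neq 0$ by $\exists z\colon qz-1=0$, merges disjunctions $g_1=0\lor g_2=0$ into a single equation $g_1 g_2=0$ using disjoint blocks of auxiliary variables, and collapses conjunctions of equations into a single one (by $g_1^2+g_2^2=0$ when $F\subseteq\mathbb{R}$ so that $F((t))$ is formally real, and by analogous tricks available in the $p$-adic case). Parameter-freeness of $\phi$ ensures the coefficients of the resulting $g$ lie in $\mathbb{Z}$.

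The principal obstacle is Step~2: the real content of the Corollary lies in the result from \cite{AnscombeFehm} that converts a density condition on $F$ into a uniform existential description of $F[[t]]$ inside $F((t))$. One must verify that the precise hypotheses of the cited result match exactly what Theorem~\ref{thm:intro} delivers for our $F$ — in particular, that ``dense in \emph{some} proper real or $p$-adic closure'' is sufficient, that the two cases (real and $p$-adic) can be handled uniformly enough to cover both clauses of the Corollary, and that the existential formula produced is indeed parameter-free so that $g$ may be chosen in $\mathbb{Z}[X_1,\dots,X_n]$.
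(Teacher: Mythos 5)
There is a genuine gap in Step~2: the transfer principle you attribute to \cite{AnscombeFehm} does not exist in the form you state. The relevant result, \cite[Corollary 5.3]{AnscombeFehm}, does not take as input ``$F$ is dense in some proper real or $p$-adic closure''; its hypothesis is that $F$ (as a $\mathbb{Z}$-field) has no \emph{embedded residue}, a condition quantifying over all elementary extensions $F^*\succ F$ and all non-trivial valuations $v$ on $F^*$, namely that $F^*v$ never embeds into $F^*$. This is a property of ${\rm Th}(F)$, not of $F$ alone, and it cannot be reduced to a denseness statement about $F$ itself: for an algebraic field $F$, denseness in its closures is classical and trivial (all orderings are archimedean and all $p$-valuations have value group $\mathbb{Z}$), so your Step~1 extracts no usable content from Theorem \ref{thm:intro}. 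The entire point of Theorem \ref{thm:intro} being about all \emph{models of the theory} of algebraic fields is precisely to control the elementary extensions $F^*$ that appear in the embedded-residue condition.

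The paper's actual route is: Corollary \ref{cor:denseness} (denseness for all models of ${\rm Th}(\mathcal{A})$) yields Corollary \ref{cor:application_1}, which says that for any model of the theory of algebraic fields and any non-trivial valuation $v$, a formally real (resp.\ formally $p$-adic) residue field is already real closed (resp.\ $p$-adically closed). Applying this to an elementary extension $F^*\succ F$: if $F^*v$ embedded into $F^*$, then $F^*v$ would inherit a prime from $F^*$, hence be real or $p$-adically closed, hence $F^*$ would contain a real or $p$-adically closed subfield, forcing $F=F^*\cap\mathbb{Q}^{\rm alg}$ itself to be real or $p$-adically closed --- and this is exactly what the hypothesis $F\subsetneqq\mathbb{R}_{\rm alg}$ (or $F\subsetneqq\mathbb{Q}_{p,\rm alg}$) rules out. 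Only then does \cite[Corollary 5.3]{AnscombeFehm} deliver the existential formula. Your proposal skips the embedded-residue bridge entirely, so the step from denseness to definability of $F[[t]]$ is unsupported. (Your final paragraph converting a parameter-free existential formula into the projection of the zero set of a single $g\in\mathbb{Z}[X_1,\dots,X_n]$ is the standard manipulation and is unproblematic, as is the observation that the introduction's hypothesis is a special case of the one in Section \ref{sec:algebraic}.)
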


Although Theorem \ref{thm:intro} is a result in the model theory of fields, its proof relies heavily on recent number theoretic results:
While in the case of orderings it can build on Euler's well-known Four Squares Theorem
and Siegel's generalization to number fields,
which implies the finiteness of the Pythagoras number for algebraic fields,
in the $p$-adic case we 
make use of the recent $p$-adic analogue of Siegel's theorem \cite{ADF2}
for the so-called $p$-Pythagoras number,
where the squares are replaced by a certain rational function, the Kochen operator.
The other crucial observation involved is that for algebraic fields, denseness in real or $p$-adic closures
implies a certain uniform denseness statement, which then allows an axiomatization.
In fact we show more generally that denseness in real or $p$-adic closures 
transfers to elementarily equivalent fields as soon as Pythagoras number respectively $p$-Pythagoras number are finite.
In this generality the equivalence of denseness and uniform denseness is less obvious and builds 
on our approximation theorems from \cite{ADF1}.

\section{Statement of general result}

We recall definitions from \cite{LGP} that we will use, see also \cite{PR} for basic notions regarding $p$-valuations.
Let $F$ be a field of characteristic $0$.
A {\bf prime} $\mfP$ of $F$ is either an ordering on $F$, 
in which case we also denote it by $\leq_\mfP$,
or an equivalence class of $p$-valuations on $F$ (of arbitrary $p$-rank),
in which case we write $v_\mfP$ for a representative of $\mfP$ which has $\mathbb{Z}$ as smallest non-trivial convex subgroup
of the value group.
However, for brevity, we will freely say `$\mfP$ is a $p$-valuation' to mean that $\mfP$ is an equivalence class of $p$-valuations.
We denote by $\mathcal{O}_\mfP$ the valuation ring of $v_\mfP$, if $\mfP$ is a $p$-valuation, or the positive cone of $\leq_\mfP$, if $\mfP$ is an ordering.\footnote{Note that in the case of orderings this definition agrees with \cite{LGP} but disagrees with a definition we made in \cite{ADF1}, where for an ordering $\mfP$ we put $\mathcal{O}_\mfP = \{ x \in F \colon -1 \leq_\mfP x \leq_\mfP 1 \}$.}
The set of all primes of $F$ is denoted $\mathcal{S}(F)$.
For example, we write $\mathcal{S}(\mathbb{Q})=\{2,3,\dots,\infty\}$.
A {\bf closure} of $(F,\mfP$) is a real closure if $\mfP$ is an ordering, or is a $p$-adic closure if $\mfP$ is a $p$-valuation.
Thus the closure of $(F,\mfP)$ is unique up to isomorphism over $F$ if and only if $\mfP$ is an ordering or $\mfP$ is a $p$-valuation such that $v_{\mfP}(F^\times)$ is a $\mathbb{Z}$-group
(see Section \ref{sec:Z} for the definition of a $\mathbb{Z}$-group).
Note also that every real closed field and every $p$-adically closed field carries a unique prime, 
which induces a canonical topology.

We consider two conditions on a set $S\subseteq\mathcal{S}(F)$:

\noindent
{\bf Denseness} is the condition
\begin{enumerate}
\item[${\rm D}_S$]
For every monic $g\in F[X]$ and all $\mfP\in S$
for which $g$ has a zero in some closure of $(F,\mfP)$, 
for every $a\in F^\times$ there exists $x\in F$
with $1-g(x)^2a^{-2}\in\mathcal{O}_\mfP$.
\end{enumerate}
{\bf Uniform denseness} is the condition
\begin{enumerate}
\item[${\rm UD}_S$] 
For every monic $g\in F[X]$ and every $a\in F^\times$ there exists $x\in F$
such that for all $\mfP\in S$ 
for which $g$ has a zero in some closure of $(F,\mfP)$,
we have 
$1-g(x)^2a^{-2}\in\mathcal{O}_\mfP$.
\end{enumerate}

\begin{Lemma}\label{lem:UDimpliesD}
${\rm UD}_S$ implies ${\rm D}_S$.
\end{Lemma}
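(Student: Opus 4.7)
The plan is to observe that the implication is essentially a quantifier-reordering argument, with no real content beyond unpacking the two definitions. Comparing the two conditions, the only difference between ${\rm UD}_S$ and ${\rm D}_S$ is the position of the universal quantifier over $\mfP\in S$: in ${\rm UD}_S$ a single $x\in F$ must work simultaneously for all admissible primes $\mfP$, whereas in ${\rm D}_S$ one is allowed to choose $x$ depending on $\mfP$. So the stronger condition trivially implies the weaker one.

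Concretely, I would start by fixing arbitrary data as in the hypothesis of ${\rm D}_S$: a monic polynomial $g\in F[X]$, a prime $\mfP\in S$ such that $g$ has a zero in some closure of $(F,\mfP)$, and an element $a\in F^\times$. Then I would apply ${\rm UD}_S$ to this $g$ and $a$ to obtain some $x\in F$ with the property that $1-g(x)^2a^{-2}\in\mathcal{O}_\mfQ$ for every $\mfQ\in S$ for which $g$ has a zero in some closure of $(F,\mfQ)$. Since our fixed $\mfP$ is one such $\mfQ$, specialising gives $1-g(x)^2a^{-2}\in\mathcal{O}_\mfP$, which is the required conclusion of ${\rm D}_S$.

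There is no genuine obstacle here; the only thing worth double-checking is that the hypothesis on $\mfP$ in ${\rm D}_S$ (namely that $g$ has a zero in some closure of $(F,\mfP)$) is exactly the condition used to select the relevant primes in ${\rm UD}_S$, so the $x$ produced by ${\rm UD}_S$ is guaranteed to satisfy the inclusion at our chosen $\mfP$. The whole argument amounts to one or two sentences.
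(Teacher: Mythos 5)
Your argument is correct and is exactly the quantifier-specialisation the paper has in mind; the paper's own proof simply reads ``This is trivial.'' Nothing further is needed.
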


\begin{proof}
This is trivial.
\end{proof}

The following lemma is a variant of \cite[Proposition 1.6.1]{Ershov_multivalued}.
For the proof we use the \emph{completion} of a field with respect to a valuation or ordering. This can be defined in terms of transfinite Cauchy sequences, see \cite[Theorem 2.4.3]{EP} respectively \cite{Hau} (written $\overline{F}_{X(F)}$ in the latter). Many alternative definitions are also available in the literature, see for instance \cite[III §6, No 8]{Bourbaki_TG} and \cite{Bae}.
\begin{Lemma}
$F$ satisfies ${\rm D}_S$ if and only if $F$ is dense in every closure
at elements of $S$.
\end{Lemma}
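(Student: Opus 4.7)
My plan is to prove each implication separately, using the completion $\hat F$ of $F$ with respect to $\mfP$ as the key auxiliary tool.

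The direction ``denseness $\Rightarrow {\rm D}_S$'' is by continuity. Fix monic $g \in F[X]$, $\mfP \in S$, $a \in F^\times$, and suppose $g$ has a zero $\alpha$ in a closure $F'$ of $(F, \mfP)$. The function $y \mapsto 1 - g(y)^2 a^{-2}$ is $\mfP$-continuous on $F'$ and takes the value $1$ at $\alpha$; since $1$ lies in the $\mfP$-interior of $\mathcal{O}_\mfP$ in both cases (the valuation ring is open in the $p$-valuation case; the positive cone contains a $\mfP$-neighborhood of $1$ in the ordering case), the preimage under this map is a $\mfP$-neighborhood of $\alpha$ in $F'$, and denseness of $F$ in $F'$ supplies some $x \in F$ in this neighborhood, witnessing ${\rm D}_S$.

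For the reverse direction, I would reformulate denseness of $F$ in a closure $F'$ as the existence of an $F$-embedding $F' \hookrightarrow \hat F$ of $\mfP$-fields: one direction sends $\alpha \in F'$ to the limit in $\hat F$ of an approximating Cauchy sequence from $F$, and the converse uses that $F$ is already dense in $\hat F$ by construction. Given ${\rm D}_S$, I would build such an embedding incrementally. For each $\alpha \in F'$ with minimal polynomial $g \in F[X]$, choosing $x_n \in F$ with $v_\mfP(g(x_n))$ tending to $\infty$ (respectively $|g(x_n)|$ tending to $0$) forces $(x_n)$ into a bounded region (since $g$ is monic), so $v_\mfP(g'(x_n))$ stays bounded; therefore the Hensel condition $v_\mfP(g(x_n)) > 2 v_\mfP(g'(x_n))$ eventually holds. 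Hensel's lemma in $\hat F$, which is henselian in the $p$-valuation case and Cauchy-complete (hence supporting an intermediate value argument) in the ordering case, then produces a root of $g$ in $\hat F$ close to $x_n$. A transfinite induction through finite subextensions of $F'/F$ upgrades this to a full $F$-embedding $F' \hookrightarrow \hat F$.

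I expect the main obstacle to be compatibility of the root choices during the induction: Hensel supplies \emph{some} root of $g$ in $\hat F$, not necessarily one corresponding under the fixed embedding to the distinguished $\alpha \in F'$. In the ordering case this is not an issue because closures of $(F,\mfP)$ are unique up to $F$-isomorphism, so any embedding of \emph{some} closure into $\hat F$ suffices. In the $p$-valuation case one must argue with more care, using that every closure of $(F,\mfP)$ contains the henselization $F^h$ (which is itself canonically embedded in $\hat F$), and that the extension of an $F$-embedding from $F(\alpha_1,\dots,\alpha_n)$ to $F(\alpha_1,\dots,\alpha_{n+1})$ reduces to the same Hensel-style approximation argument applied to the image of the minimal polynomial of $\alpha_{n+1}$ over the partial embedding built so far.
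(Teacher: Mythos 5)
Your forward direction, and the skeleton of your reverse direction (approximate roots in $F$ $\Rightarrow$ an actual root of $g$ in the completion $\hat F$ $\Rightarrow$ an $F$-embedding $F'\hookrightarrow\hat F$ $\Rightarrow$ denseness of $F$ in $F'$), coincide with the paper's proof. The gap is in how you produce the root of $g$ in $\hat F$. You invoke ``Hensel's lemma in $\hat F$, which is henselian in the $p$-valuation case and Cauchy-complete (hence supporting an intermediate value argument) in the ordering case,'' and neither assertion is available to you. A $p$-valuation may have rank greater than $1$ (only its smallest nontrivial convex subgroup need be $\mathbb{Z}$), and the completion of a higher-rank valued field need not be henselian; by Proposition \ref{prop:equiv_denseness_completions}, henselianity (indeed $p$-adic closedness) of $\hat F$ is \emph{equivalent} to the denseness statement being proved, so assuming it is circular. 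Concretely, Newton iteration from a single good approximation yields $v(g(x_k))\geq 2^k\gamma_0$, and such values are not cofinal in a value group of rank $>1$, so the iterates need not form a Cauchy sequence and need not converge in $\hat F$. The ordering case has the same defect: Cauchy completeness of an ordered field does not imply the intermediate value property for polynomials --- that property for $\hat F$ is again equivalent, via Proposition \ref{prop:equiv_denseness_completions}(3) and \cite{Hau}, to the conclusion.

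The correct argument must use the full strength of ${\rm D}_S$: for \emph{every} $a\in F^\times$ there is $x_a$ with $v_\mfP(g(x_a))\geq v_\mfP(a)$ (respectively $|g(x_a)|\leq_\mfP|a|$), i.e.\ a net of approximate roots whose quality is cofinal in the value group (respectively coinitial in the positive elements), and one shows that this net itself accumulates at a root of $g$ in $\hat F$ using only completeness with respect to transfinite Cauchy sequences. This is exactly \cite[Theorem 2.4.5]{EP} in the valuation case; in the ordering case the paper factors $g$ into linear and quadratic factors over a real closure of $\hat F$, bounds the quadratic factors below by positive constants, and pigeonholes over the finitely many real roots to conclude that one of them is approximated arbitrarily well by the $x_a$ and hence lies in $\hat F$. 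Your B\'ezout-type observation that $v_\mfP(g'(x_a))$ stays bounded is correct, but it feeds into the wrong continuation. The remaining steps of your plan --- the embedding of $F'$ into $\hat F$ (the paper simply cites \cite[Lemma 20.6.3(a)]{FJ} in place of your induction) and the compatibility of that embedding with the primes, which follows automatically because $F'$ carries a unique prime --- are fine.
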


\begin{proof}
Let $\mfP\in S$.
First note that 
if $\mfP$ is a $p$-valuation,
then $1-g(x)^2a^{-2}\in\mathcal{O}_\mfP$ if and only if $v_\mfP(g(x))\geq v_\mfP(a)$,
and if $\mfP$ is an ordering, 
then $1-g(x)^2a^{-2}\in\mathcal{O}_\mfP$ if and only if $|g(x)|\leq_\mfP|a|$.
Now let $(F',\mfP')$ be a closure of $(F,\mfP)$.
Of course, if $F$ is dense in $F'$, then any zero $y\in F'$ of $g\in F[X]$ can be approximated
arbitrarily well by elements $x\in F$.

Conversely, we suppose that $F$ satisfies ${\rm D}_{S}$.
If an irreducible polynomial $g\in F[X]$ has a zero $y$ in $F'$ then we may find $x\in F$ with $v_\mfP(g(x))$ arbitrarily large if $\mfP$ is a $p$-valuation, or with $|g(x)|$ arbitrarily small if $\mfP$ is an ordering.
Then $g$ has a zero in the completion 
$\hat{F}$ of $F$ with respect to $\mfP$: this follows from \cite[Theorem 2.4.5]{EP} in the case of valuations, and is proved analogously in the case of orderings, by factoring $g$ into linear and quadratic factors over a real closure of $\hat{F}$.
Since this holds for all such $g$, and $F'/F$ is algebraic, there is an $F$-embedding of $F'$ into $\hat{F}$, see~\cite[Lemma 20.6.3(a)]{FJ}.
As $F'$ carries a unique prime, this embedding is in fact an isometry, and so $F$ is dense also in $F'$. 
\end{proof}

The main idea of the proof of the generalization of Theorem \ref{thm:intro} is that for sufficiently nice sets of primes $S$,
${\rm D}_S$ in fact implies ${\rm UD}_S$, which in turn is an elementary statement given a predicate for the holomorphy domain:

Fix a number field $K$ and a pair $\tau=(e,f)\in\mathbb{N}^{2}$, where $\mathbb{N}$ is the set of positive integers.
For a prime $\mfp$ of $K$ and an extension $F/K$, we denote by $\mcS_\mfp^{\tau}(F)$ the set of primes of $F$ lying above $\mfp$,
where in the case of $p$-valuations we restrict to 
relative initial ramification $e'$ at most $e$ and relative residue degree $f'$ dividing $f$.
The pair $\tau'=(e',f')$ is called the {\bf relative type} of $\mfP$, and we write $\tau'\leq\tau$ if and only if
$e'\leq e$ and $f'|f$.
The {\bf holomorphy domain} corresponding to $\mathcal{S}_\mfp^\tau(F)$ is 
\begin{align*}
     R_\mfp^\tau(F) = \bigcap_{\mfP\in\mathcal{S}_\mfp^\tau(F)}\mathcal{O}_\mfP.
\end{align*}
We adopt the convention that the relative type $\tau=(1,1)$ can always be omitted from any notation.
We write $\mathcal{S}_\mfp^{\tau,\mathbb{Z}}(F)$ for the set of those $\mfP\in\mathcal{S}_\mfp^\tau(F)$ that are either orderings, or are $p$-valuations for which $v_{\mfP}(F^\times)$ is a $\mathbb{Z}$-group.
We write $\mathcal{S}_\mfp^{=\tau}(F)$ for the set of those $\mfP\in\mathcal{S}_\mfp^\tau(F)$ that are either orderings, or are $p$-valuations of exact relative type $\tau$.
For $t,s\in F$ we denote by $\mathcal{S}_\mfp^{=\tau}(F,t,s)$ the set of those $\mfP\in\mathcal{S}_\mfp^{=\tau}(F)$ 
that are either orderings, or are $p$-valuations
for which $t$ is a uniformizer and $s$ is a unit whose residue generates the multiplicative group of the residue field.
Finally, we let $\mathcal{S}_\mfp^{=\tau,\mathbb{Z}}(F,t,s)=\mathcal{S}_\mfp^{=\tau}(F,t,s)\cap\mathcal{S}_\mfp^{\tau,\mathbb{Z}}(F)$.

To state the general results
we denote by $\mathcal{L}_{\rm ring}=\{+,\cdot,0,1\}$ the language of rings
and
we define $\mathcal{L}_R=\mathcal{L}_{\rm ring}\cup\{R\}$, where $R$ is a unary predicate,
as well as expansions $\mathcal{L}_{\rm ring}(K)$ and $\mathcal{L}_R(K)$ by constants for the elements of $K$.
Then the following holds:

\begin{Proposition}\label{prop:DimpliesUD}
${\rm D}_{\mathcal{S}_\mfp^\tau(F)}$ holds if and only if ${\rm UD}_{\mathcal{S}_\mfp^{=\tau'}(F,t,s)}$ holds for every $t,s\in F$ and every $\tau'\leq\tau$.
\end{Proposition}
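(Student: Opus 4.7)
My plan is to establish the two implications separately. The easier direction is that if ${\rm UD}_{\mcS_\mfp^{=\tau'}(F,t,s)}$ holds for every $\tau' \leq \tau$ and every $t,s \in F$, then ${\rm D}_{\mcS_\mfp^\tau(F)}$ holds. Given $\mfP \in \mcS_\mfp^\tau(F)$, a monic $g \in F[X]$ having a zero in a closure of $(F,\mfP)$, and $a \in F^\times$, I would produce data $(\tau',t,s)$ placing $\mfP$ inside $\mcS_\mfp^{=\tau'}(F,t,s)$. Concretely, I take $\tau'$ to be the exact relative type of $\mfP$ (automatically $\leq \tau$); in the $p$-valuation case I pick a uniformizer $t \in F$, which exists because $v_\mfP(F^\times)$ admits $\mathbb{Z}$ as its least non-trivial convex subgroup, and pick $s \in F$ whose $\mfP$-residue generates the multiplicative group of the (finite) residue field; in the ordering case any $t,s$ works by the stated convention. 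Then ${\rm UD}$ on $\mcS_\mfp^{=\tau'}(F,t,s)$ yields a single $x \in F$ approximating at every prime of that set, $\mfP$ in particular.

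For the converse, I would fix $\tau' \leq \tau$, $t,s \in F$, a monic $g \in F[X]$ and $a \in F^\times$, and let $S_g \subseteq \mcS_\mfp^{=\tau'}(F,t,s)$ be the subset of primes where $g$ has a zero in a closure. The hypothesis ${\rm D}_{\mcS_\mfp^\tau(F)}$ supplies, for each $\mfP \in S_g$ individually, some $x_\mfP \in F$ with $1 - g(x_\mfP)^2 a^{-2} \in \mcO_\mfP$; equivalently, $v_\mfP(g(x_\mfP)) \geq v_\mfP(a)$ or $|g(x_\mfP)| \leq_\mfP |a|$ according as $\mfP$ is a $p$-valuation or an ordering. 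The task is to upgrade these pointwise data into a single $x \in F$ that does the job at every $\mfP \in S_g$ simultaneously.

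The key tool I would invoke is the simultaneous approximation theorem from \cite{ADF1}, foreshadowed in the introduction. Its applicability here rests on the fact that the set $\mcS_\mfp^{=\tau'}(F,t,s)$ is parameterized uniformly: the same element $t$ is a uniformizer, and $s$ plays the same residue-field role, at every prime in the set. Consequently the local condition $g(x) \in a\mcO_\mfP$ is not a separate constraint per prime but a single polynomial-style constraint in $x$ that can be imposed across all of $\mcS_\mfp^{=\tau'}(F,t,s)$ at once. Combining the local solutions supplied by ${\rm D}_{\mcS_\mfp^\tau(F)}$ with this approximation statement should then yield the desired global $x$.

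The main obstacle I anticipate is controlling the (typically infinite) family of primes $\mfP \in S_g$ at once: classical weak approximation handles only finitely many valuations and one cannot in general reduce to a finite subset, since the set of primes in play need not be of finite character. The ADF1 approximation theorem is designed precisely for this setting, leveraging the uniform $(t,s)$-parameterization and the exact-type restriction $\tau'$ to pass from local denseness to a single global element, which is exactly what is needed to close the gap between ${\rm D}$ and ${\rm UD}$ here.
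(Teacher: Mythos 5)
Your overall architecture matches the paper's: the direction from uniform denseness to denseness is exactly the covering $\mcS_\mfp^\tau(F)=\bigcup_{\tau'\leq\tau}\bigcup_{t,s\in F}\mcS_\mfp^{=\tau'}(F,t,s)$ together with the trivial implication ${\rm UD}\Rightarrow{\rm D}$, and the converse is reduced to the approximation theorem from \cite{ADF1} (Proposition \ref{prop:approxfunctions}). The first direction of your argument is complete and correct.

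In the converse direction, however, you have not verified the hypotheses under which the approximation theorem applies, and these are precisely where the real work lies. Proposition \ref{prop:approxfunctions} requires (i) that the set of primes to which it is applied be \emph{compact} and (ii) that its elements be \emph{pairwise independent}. Your stated justification --- that the constraint is ``uniformly parameterized'' by $(t,s)$ --- addresses neither. For (ii), pairwise independence is not automatic; it is deduced from the hypothesis ${\rm D}_{\mcS_\mfp^\tau(F)}$ via Proposition \ref{lem:D_implies_independence}, whose proof occupies essentially all of Section \ref{sec:independence} (the trichotomy for dependent valuations with dense henselizations, convex valuations attached to orderings, Baer--Krull). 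For (i), the set you must feed to the approximation theorem is not $\mcS_\mfp^{=\tau'}(F,t,s)$ itself but the subset $S_g$ of primes at which $g$ has a zero in a closure, since at the other primes no local solution $x_\mfP$ is available. Compactness of $\mcS_\mfp^{=\tau'}(F,t,s)$ does follow from the uniform $(t,s)$-parameterization (Lemmas \ref{lem:SFcompact} and \ref{lem:SFsw_clopen}), but the fact that $S_g$ is open-closed in it is a separate, nontrivial point: it rests on model completeness of real closed and $p$-adically closed fields (Lemma \ref{prop:universal} and Proposition \ref{prop:local_condition_clopen}), which in turn requires knowing that the value groups of the primes involved are $\mathbb{Z}$-groups --- again a consequence of ${\rm D}$, via Lemma \ref{lem:denseness_preserves_val_grp_res_fld}. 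Without these two verifications the appeal to the approximation theorem does not go through; supplying them turns your sketch into the paper's Proposition \ref{prop:D_implies_UD}, of which the statement at hand is then an immediate corollary.
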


\begin{Proposition}\label{prop:axiom}
There exists an $\mathcal{L}_R(K)$-theory $T_\mfp^\tau$ such that
$(F,R_\mfp^\tau(F))\models T_\mfp^\tau$ if and only if
${\rm UD}_{\mathcal{S}_\mfp^{=\tau'}(F,t,s)}$ holds for every $t,s\in F$ and every $\tau'\leq\tau$.
\end{Proposition}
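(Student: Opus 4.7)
The plan is to construct $T_\mfp^\tau$ explicitly as a scheme of $\mathcal{L}_R(K)$-sentences indexed by polynomial degree $d \in \mathbb{N}$ and relative type $\tau' \leq \tau$. Since $\tau = (e,f)$ allows only finitely many pairs $\tau' = (e', f')$ with $e' \leq e$ and $f' \mid f$, only $d$ ranges infinitely; the resulting scheme is recursive. Each sentence will transcribe the $(d, \tau')$-instance of UD, quantifying universally over the coefficients of a monic $g$ of degree $d$, over $a \in F^\times$, and over the parameters $t, s \in F$ that mark a putative uniformizer and residue-generator.

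For fixed $d$ and $\tau'$, the candidate axiom is
\[
\forall c_0, \ldots, c_{d-1} \, \forall a, t, s \, \bigl( a \neq 0 \to \exists x \, \phi_{d, \tau'}(c_0, \ldots, c_{d-1}, a, t, s, x) \bigr),
\]
where $\phi_{d, \tau'}$ expresses, for $g(X) = X^d + \sum_{i<d} c_i X^i$, the statement ``for every $\mfP \in \mathcal{S}_\mfp^{=\tau'}(F, t, s)$ at which $g$ has a zero in some closure of $(F,\mfP)$, we have $1 - g(x)^2 a^{-2} \in \mathcal{O}_\mfP$''. The crux is that the inner $\forall \mfP$, ranging over a subset of $\mathcal{S}_\mfp^\tau(F)$, can be eliminated in $\mathcal{L}_R(K)$ using the predicate $R$, which is interpreted as $R_\mfp^\tau(F) = \bigcap_{\mfP \in \mathcal{S}_\mfp^\tau(F)} \mathcal{O}_\mfP$, together with the parameters $t, s$.

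Two definability facts drive the construction of $\phi_{d, \tau'}$. First, for fixed $t, s$ and $\tau'$, the restricted intersection $\bigcap_{\mfP \in \mathcal{S}_\mfp^{=\tau'}(F, t, s)} \mathcal{O}_\mfP$ is $\mathcal{L}_R(K)$-definable in $(F, R_\mfp^\tau(F))$ with parameters $t, s$: Kochen-operator-type rational functions adapted to the marked type $\tau'$ yield an expression $\kappa_{\tau', t, s}(y)$ such that $y$ lies in the restricted intersection iff $R(\kappa_{\tau', t, s}(y))$ holds. This builds on the holomorphy apparatus of \cite{LGP} and \cite{AnscombeFehm}. Second, the condition ``$g$ has a zero in some closure of $(F, \mfP)$'' is, for $\mfP \in \mathcal{S}_\mfp^{=\tau'}(F, t, s)$, reducible to a quantifier-free algebraic condition on $\bar c, t, s$: since the closure type is fixed by $\tau'$, Macintyre-style model-completeness of $p$-adically closed fields (respectively, of real-closed fields when $\mfp$ is archimedean) converts zero-existence into an elementary statement in the parameters.

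The main obstacle will be the first definability step: verifying that the rational function $\kappa_{\tau', t, s}$ tracks precisely the intended primes and no others, i.e.\ that integrality of $\kappa_{\tau', t, s}(y)$ at every prime of type $\leq \tau$ lying over $\mfp$ is equivalent to integrality of $y$ at the subfamily of primes of exact relative type $\tau'$ with the prescribed $(t,s)$-behaviour. Granting this, $\phi_{d, \tau'}$ assembles as the implication ``if $g$ has a zero in the $\tau'$-closure (the condition from step two) then $R(\kappa_{\tau', t, s}(1 - g(x)^2 a^{-2}))$'', and we take $T_\mfp^\tau$ to be the union over all $d \in \mathbb{N}$ and $\tau' \leq \tau$ of the resulting sentences. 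The biconditional in the proposition is then immediate by unpacking the axioms.
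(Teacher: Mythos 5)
There are two genuine gaps. First, your assembly of $\phi_{d,\tau'}$ as a single implication ``if $g$ has a zero in the $\tau'$-closure then $R(\kappa_{\tau',t,s}(1-g(x)^2a^{-2}))$'' cannot express ${\rm UD}$: the hypothesis ``$g$ has a zero in some closure of $(F,\mfP)$'' genuinely depends on $\mfP$, not only on the coefficients, $t,s$ and the type $\tau'$ (for $\mfp=\infty$, whether $X^2-c$ has a root in the real closure of $(F,\mfP)$ depends on the sign of $c$ at $\mfP$, which varies over the orderings in $\mathcal{S}_\infty(F)$). UD is a prime-by-prime implication: if your single antecedent means ``for all $\mfP$ in the family'' the axiom is too weak, and if it means ``for some $\mfP$'' the conclusion (integrality of $1-g(x)^2a^{-2}$ at \emph{every} prime of the restricted family) is too strong. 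The paper instead writes the implication as an $\mathcal{L}_R$-formula to be evaluated \emph{inside each closure} $(F',\mathcal{O}_{\mfP'})$ separately, and then uses the quantification-over-closures machinery of Proposition \ref{quantifylocalLemma} (i.e.\ \cite[Prop.~8.4]{LGP}) to convert ``the formula holds in all closures of all $\mfP\in\mathcal{S}_\mfp^{\tau}(F)$'' into one sentence about $(F,R_\mfp^\tau(F))$. This replaces your attempted definition of the restricted holomorphy domain $\bigcap_{\mfP\in\mathcal{S}_\mfp^{=\tau'}(F,t,s)}\mathcal{O}_\mfP$, which even if carried out would not suffice for the reason above; what is actually needed (and what Lemma \ref{lem:defining_sets_primes} provides) is a formula $\chi_\mfp^\tau(t,s)$ detecting, prime by prime, whether $\mfP$ belongs to $\mathcal{S}_\mfp^{=\tau}(F,t,s)$.

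Second, you never address the $\mathbb{Z}$-group issue. The model-completeness transfer you invoke between $F$ and its closures only works when $v_\mfP(F^\times)$ is a $\mathbb{Z}$-group; otherwise the closure of $(F,\mfP)$ is not even unique and Proposition \ref{quantifylocalLemma}(2) fails. Consequently an axiom scheme of your shape yields, in the direction ``model $\Rightarrow$ UD'', only uniform denseness on the restricted sets $\mathcal{S}_\mfp^{=\tau',\mathbb{Z}}(F,t,s)$ (this is exactly the asymmetry in Proposition \ref{lem:TStau}). The paper closes this gap by adjoining a separate theory $T_{\mathbb{Z},\mfp}^\tau$ (Proposition \ref{prop:axioms_Z_groups}) forcing $\mathcal{S}_\mfp^\tau(F)=\mathcal{S}_\mfp^{\tau,\mathbb{Z}}(F)$; constructing it is itself nontrivial, requiring the minimum-detecting polynomials of Lemma \ref{lem:one_value_zero_p_valns} and the approximation theorem (Proposition \ref{prop:value_approximation}). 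In the converse direction the hypothesis is harmless, since UD for all $(t,s,\tau')$ implies ${\rm D}_{\mathcal{S}_\mfp^\tau(F)}$ and hence that all value groups are $\mathbb{Z}$-groups by Lemma \ref{lem:denseness_preserves_val_grp_res_fld}. Your overall outline --- a recursive scheme indexed by degree and $\tau'$, with $t,s$ as additional universal parameters and the family $\mathcal{S}_\mfp^{=\tau'}(F,t,s)$ cut out by $R^\times$-conditions --- is otherwise in the spirit of the paper's construction, but without these two ingredients the biconditional is not ``immediate by unpacking the axioms''.
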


\begin{Corollary}\label{cor:Tp}
$F$ is dense in every closure at elements of $\mathcal{S}_\mfp^\tau(F)$
if and only if $(F,R_\mfp^\tau(F))\models T_\mfp^\tau$. 
\end{Corollary}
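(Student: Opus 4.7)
The plan is simply to chain together the three preceding equivalences, since Corollary \ref{cor:Tp} is a direct consequence of the unnamed denseness lemma together with Propositions \ref{prop:DimpliesUD} and \ref{prop:axiom}.

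First, I would apply the lemma characterizing ${\rm D}_S$: taking $S = \mathcal{S}_\mfp^\tau(F)$, the statement that $F$ is dense in every closure at elements of $\mathcal{S}_\mfp^\tau(F)$ is equivalent to the condition ${\rm D}_{\mathcal{S}_\mfp^\tau(F)}$. Next, I would invoke Proposition \ref{prop:DimpliesUD} to rewrite ${\rm D}_{\mathcal{S}_\mfp^\tau(F)}$ as the conjunction of ${\rm UD}_{\mathcal{S}_\mfp^{=\tau'}(F,t,s)}$ over all $t,s\in F$ and all $\tau'\leq\tau$. Finally, Proposition \ref{prop:axiom} identifies this conjunction with the assertion that $(F,R_\mfp^\tau(F))\models T_\mfp^\tau$, completing the chain.

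Since each of the three steps is a biconditional already established in the preceding results, no further work is required and there is no real obstacle; the only thing to be careful about is matching the index sets, i.e.\ verifying that Proposition \ref{prop:DimpliesUD} is applied with the same $\mfp$ and $\tau$ as are used in the definition of $T_\mfp^\tau$ in Proposition \ref{prop:axiom}, and that the quantification over $t,s\in F$ and $\tau'\leq\tau$ agrees on both sides.
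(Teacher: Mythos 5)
Your proposal is correct and is exactly the intended argument: the corollary follows by chaining the lemma identifying ${\rm D}_S$ with denseness in closures (applied to $S=\mathcal{S}_\mfp^\tau(F)$), Proposition \ref{prop:DimpliesUD}, and Proposition \ref{prop:axiom}, all of which are biconditionals with matching quantification over $t,s\in F$ and $\tau'\leq\tau$. The paper treats the corollary as immediate from these three results, so there is nothing further to add.
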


We prove Proposition \ref{prop:DimpliesUD} in Section \ref{sec:DimpliesUD} and Proposition \ref{prop:axiom} in Section \ref{sec:axiom}.
In the case of algebraic fields (the setting of Theorem \ref{thm:intro}) we obtain an $\mathcal{L}_{\rm ring}(K)$-theory,
since the holomorphy domain is uniformly definable.
This part is carried out in Section \ref{sec:algebraic},
where we also draw some consequences of the results.
In Section \ref{sec:counterexample} we show that 
the $\mathcal{L}_R$-theory $T_\infty$
cannot be replaced by an $\mathcal{L}_{\rm ring}$-theory
with the same property.
We start in Section \ref{sec:independence} with some purely algebraic characterizations of denseness conditions.

\section{Denseness in Henselizations, $p$-adic Closures and Real Closures}
\label{sec:independence}

Let $F$ be a field.
For a valuation $v$ on $F$, we write $\mathcal{O}_{v}$ for its valuation ring, $vF$ for its value group, and $Fv$ for its residue field.
A pair of valuations on $F$ is {\bf independent} if they induce distinct topologies on $F$.
Likewise for a pair of orderings,
or a pair consisting of an ordering and a valuation.
In particular, this applies to primes of $F$.
A valuation $u$ on $F$ is a {\bf refinement} of $v$, and $v$ is a {\bf coarsening} of $u$, if $\mathcal{O}_{u}\subseteq\mathcal{O}_{v}$.
If $u$ is a coarsening or refinement of $v$, then $u$ and $v$ are {\bf comparable}.
Note that a $p$-valuation and a $q$-valuation,
for primes $p$ and $q$, are never comparable
as soon as they are distinct.

\begin{Lemma}\label{lem:Ershov}\label{lem:Dense_in_henselization_polynomials}
Let $v$ be a valuation on $F$.
The following are equivalent:
\begin{enumerate}
\item $F$ is dense in a henselization of $(F,v)$ {\rm(}with respect to the valuation topology{\rm)}
\item For every non-trivial coarsening $w$ of $v$,
the valuation $\bar{v}$ induced by $v$ on the residue field $Fw$ is henselian.
\item For any separable monic polynomial $g \in \mathcal{O}_v[X]$ for which there exists $a_0 \in \mathcal{O}_v$ with $v(g(a_0)) > 0$ and $v(g'(a_0)) = 0$, the set $\{ v(g(a)) \colon a \in F, g(a) \neq 0 \}$ is unbounded in $vF$.
\end{enumerate}
\end{Lemma}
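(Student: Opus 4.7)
The plan is to establish the equivalences by proving $(1) \Leftrightarrow (3)$, then $(1) \Rightarrow (2)$, and finally the main implication $(2) \Rightarrow (1)$; this last step is the main obstacle and is carried out essentially as in \cite[Proposition 1.6.1]{Ershov_multivalued}.

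For $(1) \Leftrightarrow (3)$, I use the fact that $F^h$ is generated over $F$ by \emph{Hensel-lifts}, i.e., elements $\alpha \in F^h$ arising as the unique root in $F^h$ of a monic separable polynomial $g \in \mathcal{O}_v[X]$ lying in the residue class of some $a_0 \in \mathcal{O}_v$ with $v(g(a_0))>0 = v(g'(a_0))$. For any such $g$, $\alpha$, and any $x \in F$ with $v(x - a_0) > 0$, a standard Taylor-expansion estimate (together with the observation that $v^h(g'(\alpha)) = v(g'(a_0)) = 0$) yields $v^h(x - \alpha) = v(g(x))$. Thus condition (3) asserts precisely that each such $\alpha$ can be approximated by elements of $F$ to arbitrary precision, which by continuity of polynomial operations in the valuation topology is equivalent to $F$ being dense in $F^h$.

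For $(1) \Rightarrow (2)$, assume $F$ is dense in $F^h$ and let $w$ be a non-trivial coarsening of $v$, corresponding to a proper convex subgroup $\Delta \subsetneq vF$. Since $\Delta$ is proper and convex, there exists $\gamma \in vF$ exceeding every element of $\Delta$. Let $w^h$ be the coarsening of $v^h$ corresponding to $\Delta$. For any $\bar\alpha \in F^h w^h$, lift to some $\alpha \in \mathcal{O}_{w^h}$; by density, find $x \in F$ with $v^h(x-\alpha) > \gamma$, whence $w^h(x-\alpha) > 0$ and $\bar x = \bar\alpha$ in $F^h w^h$. This shows that the canonical map $Fw \to F^h w^h$ is surjective, hence an isomorphism. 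Since $v^h$ on $F^h$ is henselian, the induced valuation $\bar v^h$ on the coarsening residue field $F^h w^h$ is henselian (a standard inheritance property), and hence so is $\bar v$ on $Fw$.

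The hard direction $(2) \Rightarrow (1)$ is the main obstacle. The plan is to deduce (3) from (2) by working along the chain of convex subgroups of $vF$. Given $g$ and $a_0$ as in (3), Newton's iteration $a_{n+1} = a_n - g(a_n)/g'(a_n)$ produces $a_n \in F$ with $v(g(a_n)) \geq 2^n v(g(a_0))$, which is unbounded within the convex subgroup of $vF$ generated by $v(g(a_0))$. If this subgroup is all of $vF$, we are done; otherwise, consider the non-trivial coarsening $w$ corresponding to some larger proper convex subgroup $\Delta$. Condition (2) provides henselianity of $\bar v$ on $Fw$, which allows one to lift an approximate Hensel root found inside $Fw$ back to a new starting point $a_0' \in F$ whose value $v(g(a_0'))$ lies strictly outside $\Delta$. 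Iterating this procedure transfinitely along the ascending chain of convex subgroups of $vF$ exhausts all values, yielding (3). The principal technical difficulty lies in the lifting step: one must carefully arrange that the new starting point $a_0'$ still satisfies the Hensel condition for the original polynomial $g$, so that Newton's iteration can resume. This bookkeeping is the substance of Ershov's argument, which we adapt.
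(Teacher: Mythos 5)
The paper does not prove this lemma at all: it simply cites \cite[Theorem 1.6.1, Proposition 1.6.1]{Ershov_multivalued}. So you are supplying an argument where the authors supply a reference, and the question is whether your argument is sound. Your implications $(1)\Rightarrow(3)$, $(1)\Rightarrow(2)$ and $(2)\Rightarrow(3)$ are essentially correct: in particular your lifting step in $(2)\Rightarrow(3)$ does work out (reducing $g$ modulo $\mathfrak{m}_w$ gives a Hensel datum over $(Fw,\bar v)$; its root $\bar\rho$ satisfies $\bar v(\bar\rho-\bar a_0)>0$, so any lift $a_0'$ has $v(g'(a_0'))=0$ and $v(g(a_0'))>\Delta$), and in fact no transfinite iteration is needed: taking $\Delta$ to be the convex subgroup generated by the positive values $v(g(a))$, which is proper if $(3)$ fails, the single lifting step already produces a value outside $\Delta$, a contradiction.

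The genuine gap is in $(3)\Rightarrow(1)$, on which your entire proof of the hard direction $(2)\Rightarrow(1)$ also rests. Your Taylor estimate $v^h(x-\alpha)=v(g(x))$ is valid only for $x$ with $v(x-a_0)>0$, whereas condition $(3)$ quantifies over \emph{all} $a\in F$. An element $a$ with $v(g(a))$ large need only be close to \emph{some} root of $g$: writing $v(g(a))=\sum_\beta \tilde v(a-\beta)$ over the roots in $\widetilde F$, a large value forces $a$ to be very close to exactly one root $\beta$, but nothing forces $\beta=\alpha$ (note that $v(g'(a_0))=0$ makes all other roots lie at distance $0$ from $\alpha$, i.e.\ in different residue classes, so approximants of $\beta\neq\alpha$ tell you nothing about $\alpha$). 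Thus $(3)$ for a single datum does \emph{not} "assert precisely that $\alpha$ can be approximated"; it only yields that some root of $g$ in $F^h$ is approximated cofinally, and the obvious attempts to upgrade this (passing to the minimal polynomial of $\alpha$, or arguing that the subfield of approximable elements is henselian) run into the same which-root ambiguity. The standard repair is to prove $(2)\Rightarrow(1)$ directly, without passing through $(3)$: henselianity of the induced valuations on all residue fields $Fw$ identifies $F^h$ with the henselization of $F$ with respect to a suitable non-trivial coarsening of smaller rank, comparable non-trivial valuations induce the same topology, and one concludes by (transfinite) induction on the chain of convex subgroups --- this is the substance of Ershov's Proposition 1.6.1. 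Alternatively, one can strengthen the statement you actually prove, namely that $(1)$ is equivalent to the variant of $(3)$ in which $a$ is restricted to the residue class of $a_0$, and then show that $(3)$ as stated implies that variant via $(3)\Rightarrow(2)\Rightarrow(1)$; either way, an additional argument is required.
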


\begin{proof}
See
\cite[Theorem 1.6.1]{Ershov_multivalued} and
\cite[Proposition 1.6.1]{Ershov_multivalued}.
\end{proof}

\begin{Lemma}\label{lem:denseness_preserves_val_grp_res_fld}
  Let $F'$ be a field extension of $F$ with a valuation $v'$ extending $v$. 
  If $F$ is dense in $(F',v')$, then $F'v' = Fv$ and $v'F' = vF$.
\end{Lemma}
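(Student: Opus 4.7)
The plan is to exploit the ultrametric inequality: once $F$ approximates an element of $F'$ strictly better than the value of that element, the approximating element of $F$ must share both its $v'$-value and its $v'$-residue. Density is to be read with respect to the valuation topology, i.e.\ for every $x \in F'$ and every $\gamma \in v'F'$ there is $a \in F$ with $v'(x-a) > \gamma$.

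For the value-group statement, I would take an arbitrary $x \in F'^{\times}$, set $\gamma = v'(x) \in v'F'$, and use density to produce $a \in F$ with $v'(x-a) > v'(x)$. Writing $a = x - (x-a)$ and applying the ultrametric inequality then forces $v'(a) = v'(x)$; in particular $a \neq 0$, and since $v' \restriction F = v$ we get $v'(x) = v(a) \in vF$. The reverse inclusion $vF \subseteq v'F'$ is immediate.

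For the residue field, I would take $y \in \mathcal{O}_{v'}$ and apply density to find $a \in F$ with $v'(y-a) > 0$. The ultrametric inequality gives $v'(a) \geq \min(v'(y), v'(y-a)) \geq 0$, so $a \in F \cap \mathcal{O}_{v'} = \mathcal{O}_v$, while $y - a$ lies in the maximal ideal of $v'$, so the residue of $y$ in $F'v'$ equals the image of $\bar a \in Fv$ under the natural embedding $Fv \hookrightarrow F'v'$. Thus $F'v' \subseteq Fv$, and again the reverse containment is trivial. There is no real obstacle here: the whole argument is a one-line application of the ultrametric property in each case, and the only point requiring attention is simply that the approximating element can be chosen in $F$ itself rather than in some intermediate field.
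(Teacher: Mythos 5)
Your proof is correct and is essentially the same as the paper's: both use the ultrametric inequality to show that an approximant $a\in F$ with $v'(x-a)>v'(x)$ shares the value of $x$, and one with $v'(y-a)>0$ shares the residue of $y\in\mathcal{O}_{v'}$. Nothing further is needed.
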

\begin{proof}
  Given $x \in \mathcal{O}_{v'}$, any $y \in F$ with $v'(x-y) > 0$ has the same residue as $x$; this proves the equality of residue fields.
  Given $x \in(F')^{\times}$, any $y \in F$ with $v'(x-y) > v'(x)$ has the same valuation as $x$; this proves the equality of value groups.
\end{proof}

\begin{Lemma}\label{lem:trichotomy}
Let $v_{1},v_{2}$ be valuations on $F$.
Suppose that $F$ is dense in henselizations of both $(F,v_{1})$ and $(F,v_{2})$.
Then
\begin{enumerate}
\item $v_{1}$ and $v_{2}$ are comparable, or
\item $v_{1}$ and $v_{2}$ are independent, or
\item both $Fv_{1}$ and $Fv_{2}$ are algebraically closed.
\end{enumerate}
\end{Lemma}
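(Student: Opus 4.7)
My plan is to assume that conditions (1) and (2) both fail and deduce (3). Suppose therefore that $v_1, v_2$ are incomparable and dependent. Dependence means that the compositum $\mathcal{O}_{v_1} \cdot \mathcal{O}_{v_2}$ is a proper subring of $F$; it is then automatically a valuation ring of $F$, and the corresponding valuation $w$ is a non-trivial common coarsening of $v_1$ and $v_2$. In fact $w$ is the \emph{finest} such, since any common coarsening $w'$ satisfies $\mathcal{O}_{w'} \supseteq \mathcal{O}_{v_1} \cdot \mathcal{O}_{v_2} = \mathcal{O}_w$. The incomparability of $v_1, v_2$ then forces $\mathcal{O}_{v_i} \subsetneq \mathcal{O}_w$ for both $i$, so the valuation $\bar{v}_i$ induced by $v_i$ on the residue field $Fw$ is non-trivial.

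Next I would check that $\bar{v}_1$ and $\bar{v}_2$ are \emph{independent} on $Fw$: any non-trivial common coarsening of them would, via the standard correspondence between valuations on $Fw$ and valuation rings of $F$ containing $\mathfrak{m}_w$, pull back to a common coarsening of $v_1, v_2$ on $F$ with valuation ring strictly contained in $\mathcal{O}_w$, contradicting the minimality of $w$. Since $w$ is itself a non-trivial coarsening of $v_i$, Lemma~\ref{lem:Ershov}(2), applied to the hypothesis that $F$ is dense in a henselization of $(F, v_i)$, yields that $\bar{v}_i$ is henselian on $Fw$ for $i = 1, 2$.

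Thus $Fw$ admits two non-trivial independent henselian valuations, and the classical theorem of F.~K.~Schmidt implies that $Fw$ is separably closed, hence algebraically closed in the characteristic zero setting of the paper. A short Hensel-lifting argument then shows that the residue field of any henselian valuation on an algebraically closed field is itself algebraically closed; applied to $\bar{v}_i$ on $Fw$ this gives that $Fv_i = (Fw)\bar{v}_i$ is algebraically closed, which is conclusion~(3). The main obstacle is the passage from $F$ to $Fw$: F.~K.~Schmidt does not apply to the dependent pair $v_1, v_2$ on $F$ itself, so the argument must descend to the residue field of the finest common coarsening, and the technical heart of the proof is verifying at that level that the induced valuations are simultaneously non-trivial, independent, and henselian.
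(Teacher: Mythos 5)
Your proposal is correct and follows essentially the same route as the paper: pass to the residue field of the finest common coarsening $w$ (non-trivial by dependence, a proper coarsening of each $v_i$ by incomparability), apply Lemma \ref{lem:Ershov} to see the induced valuations are henselian, invoke F.~K.~Schmidt to get $Fw$ separably closed, and descend to $Fv_i$. The only difference is that you prove by hand (via the compositum of valuation rings, and via integral closedness at the end) what the paper delegates to \cite[Theorem 2.3.4]{EP} and \cite[Theorem 3.2.11]{EP}.
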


\begin{proof}
Suppose that $v_{1}$ and $v_{2}$ are incomparable and dependent.
Then both $v_{1}$ and $v_{2}$ are non-trivial.
Let $w$ be the finest common coarsening of $v_{1}$ and $v_{2}$.
Since $v_{1}$ and $v_{2}$ are dependent,
$w$ is also non-trivial, see \cite[Theorem 2.3.4]{EP}.
Since $v_{1}$ and $v_{2}$ are incomparable,
$w$ is a proper coarsening of both $v_{1}$ and $v_{2}$.
Thus $v_{1}$ and $v_{2}$ induce independent non-trivial valuations $\bar{v}_{1}$ and $\bar{v}_{2}$ on $Fw$.
By $(1)\Rightarrow(2)$ of Lemma \ref{lem:Ershov}, both $\bar{v}_{1}$ and $\bar{v}_{2}$ are henselian.
Therefore, by F.\ K.\ Schmidt's theorem \cite[Theorem 4.4.1]{EP}, $Fw$ is separably closed,
and so both $Fv_{1}$ and $Fv_{2}$ are algebraically closed,
see \cite[Theorem 3.2.11]{EP}.
\end{proof}

Let $F$ now be a field of characteristic $0$.
Given a valuation $u$ and an ordering $\leq$ on $F$, we say that $u$ is {\bf $\leq$-convex} if its valuation ring $\mathcal{O}_{u}$ is a convex subset of $F$ with respect to $\leq$.
We may now say that $u$ is a {\bf coarsening} of a prime $\mfP\in\mathcal{S}(F)$ if
$u$ is a coarsening of $v_{\mfP}$, in the usual sense for valuations, if $\mfP$ is a $p$-valuation, or if $u$ is $\leq_{\mfP}$-convex, if $\mfP$ is an ordering.

Given an ordering $\leq$ on $F$,
we denote by $v_{\leq}$ the valuation whose valuation ring is the convex hull of the prime field $\mathbb{Q}$ with respect to $\leq$.
Thus
$v_{\leq}$ is the finest valuation which is $\leq$-convex,
and
$u$ is $\leq$-convex if and only if $u$ is a coarsening of $v_{\leq}$.
Note that 
$\leq$ is archimedean if and only if $v_{\leq}$ is the trivial valuation.
Also note that
if $\leq$ is non-archimedean then $\leq$ and $v_{\leq}$ are dependent,
whereas
if $\leq$ is archimedean then it is independent from all valuations and all other orderings.

If $u$ is $\leq$-convex,
then $\leq$ induces an ordering on $Fu$, which we denote by $\leq_{u}$.
In general, $\leq_u$ does not uniquely determine $\leq$:
by the Baer--Krull theorem \cite[Theorem 2.2.5]{EP}, there is a {family} of orderings on $F$ each of which induces $\leq_{u}$ on $Fu$.

We can now give an equivalent statement of denseness conditions in terms of completions.
\begin{Proposition}\label{prop:equiv_denseness_completions}
  Let $F$ be a field.
  \begin{enumerate}
  \item If $v$ is a valuation on $F$ and $(F', v')$ a henselization of $(F,v)$, then $F$ is dense in $(F', v')$ if and only if the completion $(\hat F, \hat v)$ of $(F,v)$ is henselian.
  \item If $v$ is a $p$-valuation on $F$ and $(F', v')$ a $p$-adic closure of $(F, v)$, then $F$ is dense in $(F', v')$ if and only if the completion $(\hat F, \hat v)$ of $(F,v)$ is $p$-adically closed.
  \item If $\leq$ is an ordering on $F$ and $(F', \leq')$ a real closure of $(F, \leq)$, then $F$ is dense in $(F', \leq')$ if and only if the completion $(\hat F, \hat\leq)$ of $(F,v)$ is real closed.
  \end{enumerate}
\end{Proposition}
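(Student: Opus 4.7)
The plan is to prove all three parts together, using (1) as the foundation for (2) and (3).

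The common forward-direction template is this: density of $F$ in $F'$ identifies $F'$ with a subfield of $\hat F$ via Cauchy limits (canonically, using uniqueness of extensions of the prime from $F$ to $F'$). Lemma \ref{lem:denseness_preserves_val_grp_res_fld} and its straightforward ordering analogue then give $v'F' = vF = \hat v\hat F$ and $F'v' = Fv = \hat F\hat v$. Combined with henselianity of $\hat F$ (obtained from (1), applied to $v$ or to the natural valuation $v_\leq$), these invariants promote $\hat F$ from henselian to $p$-adically closed or real closed: the extra defining conditions ($\mathbb{Z}$-group value group, correct residue field, or divisibility of value group together with real closure of residue field in the ordering case) hold in $F'$ by assumption, hence by transfer also in $\hat F$.

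The backward direction exploits universal properties together with the fact that $F$ is always dense in $\hat F$ by construction. If $\hat F$ is henselian, the henselization $F^h$ embeds into $\hat F$ over $F$, and density of $F$ in $\hat F$ restricts to density in $F^h$; this handles (1). For (2) and (3), if $\hat F$ is $p$-adically closed or real closed, I would take the relative algebraic closure $F''$ of $F$ inside $\hat F$: $F''$ is henselian (algebraic subfields of henselian fields are henselian), has the same value group and residue field as $\hat F$, and is therefore itself $p$-adically closed or real closed and thus a closure of $(F,\mfP)$. Density of $F$ in $\hat F$ restricts to density in $F''$, and uniqueness of closures up to $F$-isomorphism---forced for $p$-adic closures by the fact that $\hat F$ being $p$-adically closed implies $vF$ is a $\mathbb{Z}$-group, and for real closures by Artin--Schreier---transfers density to the given $F'$.

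The main technical step is the forward direction of (1): from $F$ dense in $F^h$, deduce that $\hat F$ is henselian. My plan is to verify condition (3) of Lemma \ref{lem:Dense_in_henselization_polynomials} for $(\hat F,\hat v)$. Given a monic separable $g \in \mathcal{O}_{\hat v}[X]$ and $a_0 \in \mathcal{O}_{\hat v}$ with $\hat v(g(a_0))>0$ and $\hat v(g'(a_0))=0$, I would approximate $g$ and $a_0$ by a polynomial $\tilde g \in \mathcal{O}_v[X]$ and an element $\tilde a_0 \in \mathcal{O}_v$ with the same combinatorial behaviour, using density of $F$ in $\hat F$; the hypothesis on $F$ then yields $a \in F$ with $v(\tilde g(a))$ arbitrarily large in $vF = \hat v\hat F$, which by proximity translate to elements with $\hat v(g(a))$ arbitrarily large. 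This shows $\hat F$ is dense in its henselization $\hat F^h$. Since $\hat F$ is Cauchy complete and $\hat F^h/\hat F$ is algebraic (so every element of $\hat F^h$ is realised as a Cauchy limit from $\hat F$), this density forces $\hat F^h = \hat F$, i.e., $\hat F$ is henselian.
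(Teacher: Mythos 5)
Your proposal is correct, and for parts (1) and (2) it is essentially the paper's argument: the same verification of condition (3) of Lemma \ref{lem:Dense_in_henselization_polynomials} for $(\hat F,\hat v)$ by approximating the separable polynomial over $\mathcal{O}_{\hat v}$ by one over $\mathcal{O}_v$, the same ``complete and dense in its henselization implies henselian'' conclusion, and the same reduction of (2) to (1) via Lemma \ref{lem:denseness_preserves_val_grp_res_fld} forcing $vF$ to be a $\mathbb{Z}$-group so that henselization and $p$-adic closure coincide. The genuine divergence is in (3): the paper simply cites Hauschild (Satz 13, 14), whereas you derive it from (1) by passing to the natural valuation $v_\leq$ and transferring divisibility of the value group and real closedness of the residue field between $F'$ and $\hat F$; this is in effect a re-proof of the content of Lemma \ref{lem:orderings.A} and Lemma \ref{lem:EP.4}, which the paper already has, so it is a legitimate self-contained route. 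Two small points to make explicit if you write it up: you must split off the archimedean case (where $v_\leq$ is trivial and (1) says nothing, but the completion is $\mathbb{R}$ and both sides of (3) hold automatically), and you need that $\hat v_\leq$ is $\hat\leq$-convex on $\hat F$ before invoking the Baer--Krull/EP.4 machinery. Also, your backward-direction detour through the relative algebraic closure of $F$ in $\hat F$ is sound (a relatively separably closed subfield of a henselian field is henselian) but heavier than needed: once $\hat F$ is $p$-adically closed or real closed, the closure $F'$ embeds into $\hat F$ over $F$ directly (uniqueness of real closures, respectively uniqueness of $p$-adic closures once $vF$ is a $\mathbb{Z}$-group), and density restricts.
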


\begin{proof}
  For the first point, observe that if the completion $(\hat F, \hat v)$ is henselian, then the henselization $(F', v')$ embeds into $\hat{F}$, and therefore $F$ is dense in it.
  For the converse direction we imitate an argument from the proof of \cite[Proposition 1.6.1]{Ershov_multivalued}: if $(F, v)$ is dense in its henselization, then $(\hat F, \hat v)$ is in turn dense in its own henselization, as the condition from Lemma \ref{lem:Dense_in_henselization_polynomials}(3) transfers from $(F, v)$ to $(\hat F, \hat v)$ by approximating a separable polynomial in $\mathcal{O}_{\hat v}[X]$ by a separable polynomial in $\mathcal{O}_v[X]$.
  By completeness, this means that $(\hat F, \hat v)$ must already be henselian.

  The second point follows from the first by observing that by Lemma \ref{lem:denseness_preserves_val_grp_res_fld} $F$ can only be dense in a $p$-adic closure if $vF$ is a $\mathbb{Z}$-group, in which case the henselization of $(F, v)$ is already a $p$-adic closure.

  The third point is immediate from \cite[Satz 13, Satz 14]{Hau}.  
\end{proof}

For lack of a convenient reference, we give a proof of the following simple lemma.

\begin{Lemma}\label{lem:compatible_prolongation}
Let $F'/F$ be any field extension.
\begin{enumerate}
\item
If $v'$ is a valuation on $F'$ prolonging a valuation $v$ on $F$,
and if $w$ is a coarsening of $v$,
then there is a coarsening $w'$ of $v'$ which is a prolongation of $w$.
\item
If $\leq'$ is an ordering on $F'$ extending an ordering $\leq$ on $F$,
and if $w$ is a coarsening of $\leq$,
then there is a coarsening $w'$ of $\leq'$ which is a prolongation of $w$.
\end{enumerate}
\end{Lemma}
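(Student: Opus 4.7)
The plan is to handle both parts by taking an appropriate convex hull in the extension; this exploits the correspondence between coarsenings of a valuation and convex subgroups of its value group for part (1), and the general fact that convex subrings of an ordered field are valuation rings for part (2).

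For part (1), I would first recall the bijection $H \mapsto w_H$ between convex subgroups $H$ of $vF$ and coarsenings of $v$, and let $H \leq vF$ be the convex subgroup corresponding to $w$. Since $v'$ prolongs $v$, the group $vF$ embeds as an ordered subgroup of $v'F'$, and I take $H'$ to be the convex hull of $H$ inside $v'F'$. A short check shows $H'$ is a subgroup: if $|a|,|b| \leq h$ for some $h \in H$ with $h \geq 0$, then $|a+b| \leq 2h \in H$. Hence the coarsening $w'$ of $v'$ corresponding to $H'$ is well defined. The key point is $H' \cap vF = H$: any $g \in vF$ with $-h \leq g \leq h$ for some $h \in H$ lies in $H$ by convexity of $H$ in $vF$. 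Translating back through the correspondence, this says precisely that $w'|_F = w$.

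For part (2), I would define $\mathcal{O}_{w'}$ to be the $\leq'$-convex hull of $\mathcal{O}_w$ in $F'$, i.e.\ the set of $x \in F'$ satisfying $-c \leq' x \leq' c$ for some $c \in \mathcal{O}_w$. Given two such elements $x,y$ with common bound $c$, one has $|x+y| \leq' 2c$ and $|xy| \leq' c^2$, and since $2c,c^2 \in \mathcal{O}_w$ this shows $\mathcal{O}_{w'}$ is a subring of $F'$. A convex subring of an ordered field is automatically a valuation ring, since for any non-zero $x \in F'$ either $|x| \leq 1$ or $|x^{-1}| \leq 1$ and $\pm 1 \in \mathcal{O}_{w'}$, so $\mathcal{O}_{w'}$ determines a valuation $w'$ on $F'$, which is $\leq'$-convex by construction. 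Finally $\mathcal{O}_{w'} \cap F = \mathcal{O}_w$: given $x \in \mathcal{O}_{w'} \cap F$, the witnesses $-c \leq' x \leq' c$ actually hold in $F$ under $\leq$ (since $\leq'$ extends $\leq$), so the $\leq$-convexity of $\mathcal{O}_w$ forces $x \in \mathcal{O}_w$.

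I do not anticipate a serious obstacle: the only thing to watch is that the convex hull genuinely preserves the relevant algebraic structure (group in (1), ring in (2)), but in both cases this follows immediately from triangle-type bounds combined with the subgroup/subring property of the data we started with.
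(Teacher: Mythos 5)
Your proof is correct. Part (1) is essentially identical to the paper's argument: both pass to the convex subgroup $\Delta\leq vF$ corresponding to $w$ and take its convex hull in $v'F'$ (the paper does not even spell out the verification that the hull is a subgroup meeting $vF$ in $\Delta$, which you rightly check). For part (2) you take a genuinely different, more direct route: you form the $\leq'$-convex hull of $\mathcal{O}_w$ in $F'$ and verify by hand that it is a convex valuation subring restricting to $\mathcal{O}_w$, whereas the paper simply applies part (1) with $v=v_{\leq}$ and $v'=v_{\leq'}$, using the earlier observation that the $\leq$-convex valuations are exactly the coarsenings of $v_{\leq}$. The paper's reduction is shorter but silently relies on $v_{\leq'}$ prolonging $v_{\leq}$ and on the identification of coarsenings of $\leq'$ with coarsenings of $v_{\leq'}$; your argument is self-contained and makes the valuation-ring verification explicit. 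One cosmetic point: when showing closure under addition you should take the common bound to be $c+c'$ (or $\max$) for the two given bounds rather than assuming a single $c$ works for both elements, but this is immediate since the bounds are automatically nonnegative and $\mathcal{O}_w$ is a ring.
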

\begin{proof}
Recalling the correspondence between coarsenings of $v$ and convex subgroups of $vF$,
we denote by $\Delta$ the subgroup corresponding to $w$.
In the analogous correspondence for coarsenings of $v'$,
we choose $w'$ to correspond to the convex hull of $\Delta$ in $v'F'$.
Then $w'$ is both a coarsening of $v'$ and a prolongation of $w$.
This proves (1).
For (2), we apply (1) in the case $v=v_{\leq}$ and $v'=v_{\leq'}$.
\end{proof}

\begin{Lemma}\label{lem:EP.4}
Let $\leq$ be an ordering on $F$ and let $v$ be a $\leq$-convex valuation.
Then $(F,\leq)$ is real closed if and only if
\begin{enumerate}
\item $vF$ is divisible,
\item $(Fv,\leq_{v})$ is real closed, and
\item $(F,v)$ is henselian.
\end{enumerate}
\end{Lemma}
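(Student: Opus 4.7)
The plan is to prove the two implications separately. The forward direction directly verifies (1), (2), (3) from the defining properties of a real closed field. The reverse direction proceeds by showing that a real closure of $(F,\leq)$ must equal $F$, using the defectlessness of henselian extensions in residue characteristic zero.

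For the forward direction, suppose $(F,\leq)$ is real closed. Condition (1) follows because every positive element of $F$ has an $n$-th root for every $n\geq 1$, so every element of $vF$ is divisible by $n$. For (2), I would verify the two defining properties of a real closed field on $(Fv,\leq_v)$: every positive element of $Fv$ lifts to a positive unit $u\in\mathcal{O}_v$, whose positive square root in $F$ automatically lies in $\mathcal{O}_v$ and reduces to a square root of $\bar u$; every monic odd-degree polynomial over $Fv$ lifts to a monic polynomial $g\in\mathcal{O}_v[X]$, any root of which in $F$ is integral over $\mathcal{O}_v$, hence in $\mathcal{O}_v$, and reduces to a root of the original. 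For (3), given $g\in\mathcal{O}_v[X]$ with $v(g(a_0))>0$ and $v(g'(a_0))=0$, the Taylor expansion at $a_0$ together with $\leq$-convexity of $v$ shows that $g$ takes opposite signs at $a_0$ and at the nearby point $a_0-2g(a_0)/g'(a_0)$; the intermediate value property for polynomials in $(F,\leq)$ then produces an exact root in $F$.

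For the reverse direction, assume (1), (2), (3) and let $(R,\leq_R)$ be a real closure of $(F,\leq)$, which is algebraic over $F$. It suffices to show that every finite ordered extension $(F',\leq')/(F,\leq)$ satisfies $F'=F$; writing $R/F$ as the union of its finite subextensions then yields $R=F$. By Lemma \ref{lem:compatible_prolongation}(2), $v$ extends to a $\leq'$-convex valuation $v'$ on $F'$. Since $(F,v)$ is henselian by (3) and has residue characteristic zero (because $(Fv,\leq_v)$ is formally real by (2)), the extension $(F',v')/(F,v)$ is defectless, giving $[F':F]=e\cdot f$ with $e=[v'F':vF]$ and $f=[F'v':Fv]$. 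By (1), $vF$ is divisible, so for any $\gamma\in v'F'$ we can find $\delta\in vF$ with $e\delta=e\gamma$; as value groups are torsion-free, $\gamma=\delta\in vF$, forcing $e=1$. By (2), $(Fv,\leq_v)$ is real closed, and $(F'v',\leq'_{v'})$ is an algebraic ordered extension of it, so $f=1$. Hence $[F':F]=1$.

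The main obstacle is invoking the equality $[F':F]=ef$ for finite extensions of henselian valued fields in residue characteristic zero, a non-trivial classical fact (the general defect being a power of the residue characteristic), which I would cite from \cite{EP}. The remaining steps reduce to short algebraic manipulations, the one persistent subtlety being that $\leq$-convexity of $v$ must be used to translate between sign conditions in $\leq$ and valuation conditions in $v$ (for example to deduce that a unit with positive residue is itself positive).
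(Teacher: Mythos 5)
Your proof is correct. The paper itself gives no argument here: it simply cites \cite[Theorem 4.3.7]{EP}, so any self-contained proof is necessarily ``different from the paper''. Your forward direction is the standard one (divisibility from $n$-th roots of positive elements, real closedness of the residue field via integrally closed lifting of square roots and odd-degree roots, and henselianity via the sign change at $a_0$ and $a_0-2g(a_0)/g'(a_0)$ combined with the intermediate value property and $\leq$-convexity). For the converse, your route through the real closure is a nice packaging: instead of directly verifying that positive elements are squares (via Hensel plus divisibility of $vF$) and that odd-degree polynomials have roots (via reduction to $Fv$), as one would in the textbook argument, you kill any finite ordered subextension of the real closure at once using $[F':F]=ef$ for henselian fields of residue characteristic zero. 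The price is invoking the Lemma of Ostrowski (defectlessness when the residue characteristic is $0$), which is heavier machinery than the direct verification but is correctly quoted and available in \cite{EP}; the small supporting facts you use (units with equal nonzero residue have equal sign under a convex valuation, uniqueness of the extension $v'$ and the induced ordering on $F'v'$ extending $\leq_v$, and torsion-freeness of value groups to get $e=1$) all check out.
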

\begin{proof}
See
{\cite[Theorem 4.3.7]{EP}}.
\end{proof}

\begin{Lemma}\label{lem:orderings.A}
Let $\leq$ be an ordering on $F$ and let $v$ be a non-trivial $\leq$-convex valuation.
Then $F$ is dense in a real closure of $(F,\leq)$ if and only if
\begin{enumerate}
\item $vF$ is divisible,
\item $(Fv,\leq_{v})$ is real closed, and
\item $F$ is dense in a henselization of $(F,v)$.
\end{enumerate}
\end{Lemma}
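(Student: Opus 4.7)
The plan is to apply Lemma \ref{lem:EP.4} in both directions, noting that the order topology of $\leq$ coincides with the topology of the finest $\leq$-convex valuation $v_\leq$ on $F$ (the convex hull of $\mathbb{Q}$), while the given $v$ may be strictly coarser than $v_\leq$.

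For the forward direction, suppose $F$ is dense in a real closure $(R,\leq_R)$ of $(F,\leq)$, and let $v_R$ be the unique prolongation of $v$ to $R$, which is $\leq_R$-convex. Since the $v_R$-topology on $R$ is coarser than the $\leq_R$-topology, $F$ is also $v_R$-dense in $R$, and Lemma \ref{lem:denseness_preserves_val_grp_res_fld} identifies $v_R R = vF$ and $Rv_R = Fv$ with matching induced orderings. Lemma \ref{lem:EP.4} applied to $(R,\leq_R,v_R)$ yields (1) and (2); the henselianness of $(R,v_R)$ embeds a henselization $(F,v)^h$ into $R$, in which $F$ is still $v^h$-dense, giving (3).

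The backward direction is the main obstacle, because (3) only provides $v$-density, while density in a real closure requires density with respect to the finer $v_\leq$-topology. My plan is first to upgrade (3) to the stronger statement (3'): $F$ is $v_\leq$-dense in a henselization $(F,v_\leq)^h$. By Lemma \ref{lem:Ershov}, this reduces to showing, for every non-trivial coarsening $w$ of $v_\leq$, that the valuation induced by $v_\leq$ on $Fw$ is henselian. The coarsenings of $v_\leq$ form a chain (via convex subgroups of $v_\leq F$) containing $v$, so every such $w$ is either a coarsening or a refinement of $v$. If $w$ is a coarsening of $v$, then the valuation induced by $v_\leq$ on $Fw$ decomposes as the composition of the valuation induced by $v$ on $Fw$ (henselian by (3) and Lemma \ref{lem:Ershov}) with the valuation induced by $v_\leq$ on $Fv$ (henselian by Lemma \ref{lem:EP.4} applied to the real closed $(Fv,\leq_v)$ from (2)), and composition preserves henselianness. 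If $w$ is a non-trivial refinement of $v$, writing $w=\rho\circ v$ for some valuation $\rho$ on $Fv$ realises the henselian valuation induced by $v_\leq$ on $Fv$ as the composition of $\rho$ with the valuation induced by $v_\leq$ on $Fw$, forcing both factors to be henselian.

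With (3') in place, a short exact sequence of value groups shows $v_\leq F$ is divisible (the subgroup $vF$ is divisible by (1) and the quotient by Lemma \ref{lem:EP.4} applied to $(Fv,\leq_v)$), and iterating Lemma \ref{lem:EP.4} shows $(Fv_\leq,\leq_{v_\leq})$ is real closed. By the Baer-Krull theorem, the divisibility of $v_\leq F$ yields a unique ordering $\leq^h$ on $(F,v_\leq)^h$ that is compatible with $v_\leq^h$ and extends $\leq$, and Lemma \ref{lem:EP.4} applied to $((F,v_\leq)^h,\leq^h,v_\leq^h)$ identifies this as a real closure of $(F,\leq)$. Finally, $v_\leq^h$ is by definition the unique extension of $v_\leq$ to $(F,v_\leq)^h$, and the finest $\leq^h$-convex valuation on $(F,v_\leq)^h$ also restricts to $v_\leq$ on $F$; hence the two coincide, so the $\leq^h$-topology equals the $v_\leq^h$-topology, and the $v_\leq$-density of $F$ from (3') upgrades to $\leq^h$-density of $F$ in the real closure.
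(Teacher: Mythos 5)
Your argument is correct, and the forward direction is essentially the paper's (prolong $v$ convexly to the real closure, transfer density, apply Lemmas \ref{lem:denseness_preserves_val_grp_res_fld} and \ref{lem:EP.4}, embed the henselization). The backward direction, however, rests on a false premise that sends you on a long but ultimately valid detour: you assert that the $v_\leq$-topology is \emph{finer} than the $v$-topology, whereas the two coincide. Indeed $v$ and $v_\leq$ are comparable non-trivial valuations, hence dependent, hence induce the same topology (\cite[Theorem 2.3.4]{EP} and the surrounding discussion); the same holds for $v^h$ and $v_{\leq^h}$ upstairs. The paper exploits exactly this: it henselizes with respect to $v$ itself, uses Baer--Krull and the divisibility of $vF$ to put the unique compatible ordering $\leq^h$ on $(F^h,v^h)$ extending $\leq$, checks via Lemma \ref{lem:EP.4} and hypotheses (1)--(2) that $(F^h,\leq^h)$ is real closed, and then $v$-density is already order-density. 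Your route instead henselizes with respect to $v_\leq$, which forces you to first upgrade (3) to $v_\leq$-density via the coarsening criterion of Lemma \ref{lem:Ershov} and a case analysis on the chain of coarsenings of $v_\leq$ relative to $v$, using composition and decomposition of henselian valuations. That analysis is correct (and is in spirit close to how Proposition \ref{prp:dense.in.real.closure} is later deduced), but none of it is needed for this lemma.

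One further point to tighten: at the very end you claim $v_\leq^h$ equals the finest $\leq^h$-convex valuation $v_{\leq^h}$ ``because both restrict to $v_\leq$ on $F$.'' That inference is not valid on its own --- a valuation generally has several extensions to an algebraic extension, all restricting to the same thing. What saves you is that $v_\leq^h$ is $\leq^h$-convex and hence a \emph{coarsening} of $v_{\leq^h}$; two comparable non-trivial valuations induce the same topology, which is all you need to convert $v_\leq^h$-density into $\leq^h$-density. (The two valuation rings do in fact coincide, but proving that uses immediacy of the henselization, not merely equality of restrictions.)
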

\begin{proof}
$(\Longrightarrow)$
Let $(F',\leq')$ be a real closure of $(F,\leq)$,
and let $(F^{h},v^{h})$ be a henselization of $(F,v)$.
By Lemma \ref{lem:compatible_prolongation}(2),
there is a valuation $v'$ on $F'$ which is a prolongation of $v$ and which is a coarsening of $v_{\leq'}$,
i.e.~$v'$ is $\leq'$-convex.
As $F$ is dense in $(F',\leq')$ and $v'$ induces the same topology as $\leq'$, $F$ is also dense in $(F',v')$, in particular (by Lemmas \ref{lem:denseness_preserves_val_grp_res_fld} and \ref{lem:EP.4})
$Fv=F'v'$ is real closed and $vF=v'F'$ is divisible.
Also by Lemma \ref{lem:EP.4},
$(F',v')$ is henselian,
hence $(F^{h},v^{h})$ embeds into $(F',v')$ over $(F,v)$,
so $F$ is dense also in $(F^{h},v^{h})$.

$(\Longleftarrow)$
By the Baer--Krull theorem,
and since $vF$ is divisible,
$\leq$ is the unique ordering on $F$ which induces $\leq_{v}$ on $Fv$.
Any henselization $(F^{h},v^{h})$ of $(F,v)$
is an immediate extension,
hence the value group $v^{h}F^{h}=vF$ is still divisible,
and the residue field is still $Fv$.
By the Baer--Krull theorem, there is a unique ordering $\leq^{h}$ on $F^{h}$ which induces $\leq_{v}$ on $Fv$
and obviously extends $\leq$.
As $\leq^h$ and $v^h$ induce the same topology on $F^h$,
$F$ is dense in $(F^h,\leq^h)$.
Applying Lemma \ref{lem:EP.4} and assumptions (1) and (2), in fact $(F^{h},\leq^{h})$ is real closed,
hence is a real closure of $(F,\leq)$.
\end{proof}

The next proposition is an analogue of Lemma \ref{lem:Ershov}(2) for non-archimedean orderings.

\begin{Proposition}\label{prp:dense.in.real.closure}
Let $\leq$ be an ordering on $F$.
Then $F$ is dense in a real closure of $(F,\leq)$ if and only if
for all non-trivial $\leq$-convex valuations $v$ on $F$ we have
\begin{enumerate}
\item $vF$ is divisible and
\item $(Fv,\leq_{v})$ is real closed.
\end{enumerate}
\end{Proposition}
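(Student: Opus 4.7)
My plan is to reduce both directions to Lemma \ref{lem:orderings.A}, treating the archimedean case separately. For the forward direction, if $\leq$ is non-archimedean then any non-trivial $\leq$-convex valuation $v$ satisfies the hypothesis of Lemma \ref{lem:orderings.A}, whose conclusion yields conditions (1) and (2) directly. If $\leq$ is archimedean then $v_{\leq}$ is trivial, so there are no non-trivial $\leq$-convex valuations and the statement holds vacuously.

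For the backward direction, I would split into two cases. In the archimedean case, $(F,\leq)$ embeds order-preservingly into $\mathbb{R}$; a real closure of $(F,\leq)$ can be realised as the relative algebraic closure of $F$ in $\mathbb{R}$, and density is immediate from the fact that $\mathbb{Q}\subseteq F$ is already dense in $\mathbb{R}$. In the non-archimedean case, $v_{\leq}$ is non-trivial and $\leq$-convex, so I aim to apply Lemma \ref{lem:orderings.A} with $v=v_{\leq}$: conditions (1) and (2) of that lemma follow at once from applying the hypothesis to $v_{\leq}$ itself, so what remains is to verify condition (3), namely that $F$ is dense in a henselization of $(F,v_{\leq})$.

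To verify this condition, I would invoke Lemma \ref{lem:Ershov}(2), which reduces the problem to showing that for every non-trivial coarsening $w$ of $v_{\leq}$, the induced valuation $\bar{v}_{\leq}$ on $Fw$ is henselian. Given such a $w$, I observe that $w$ is itself $\leq$-convex (since coarsenings of $\leq$-convex valuations are $\leq$-convex), so the hypothesis applies to $w$ and gives that $(Fw,\leq_{w})$ is real closed. Since $v_{\leq}$ is a refinement of $w$ and is $\leq$-convex, the induced valuation $\bar{v}_{\leq}$ on $Fw$ is $\leq_{w}$-convex; Lemma \ref{lem:EP.4} applied to the real closed field $(Fw,\leq_{w})$ with this $\leq_{w}$-convex valuation then delivers that $(Fw,\bar{v}_{\leq})$ is henselian, as required.

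The main subtlety I expect is precisely the archimedean case: the stated hypothesis becomes vacuous there, so density cannot be extracted from it directly and one must appeal separately to the classical fact that every archimedean ordered field embeds densely into $\mathbb{R}$, hence into its real closure. Beyond that, the only minor point to check carefully is that convexity of a valuation with respect to an ordering descends through coarsenings to the residue field, which makes Lemma \ref{lem:EP.4} applicable on $Fw$.
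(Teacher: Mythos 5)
Your proposal is correct and follows essentially the same route as the paper: the forward direction is read off from Lemma \ref{lem:orderings.A}, and the converse reduces to verifying condition (3) of that lemma for $v=v_{\leq}$ via Lemma \ref{lem:Ershov}(2) and Lemma \ref{lem:EP.4} applied to the real closed residue fields $(Fw,\leq_w)$, with the archimedean case handled separately by the classical embedding into $\mathbb{R}$.
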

\begin{proof}
If $F$ is dense in a real closure of $(F,\leq)$, then (1) and (2) follow from Lemma \ref{lem:orderings.A}.
For the converse,
we suppose (1) and (2) hold.
First observe that if $\leq$ is archimedean then $F$ is dense in every real closure of $(F,\leq)$.
Thus we may suppose $\leq$ to be non-archimedean, and so $v_{\leq}$ is non-trivial.
If $w$ is any non-trivial $\leq$-convex valuation on $F$ (equivalently, any non-trivial coarsening of $v_{\leq}$),
then
$(Fw,\leq_{w})$ is real closed, by (2).
In particular $(Fw,\overline{v_{\leq}})$ is henselian,
by Lemma \ref{lem:EP.4}.
Since $w$ was an arbitrary non-trivial coarsening of $v_{\leq}$,
it follows from $(2)\Rightarrow(1)$ of Lemma \ref{lem:Ershov} that $F$ is dense in a henselization of $(F,v_{\leq})$.
This verifies condition (3) of Lemma \ref{lem:orderings.A} for $v=v_{\leq}$.
Therefore $F$ is dense in a real closure of $(F,\leq)$, as required.
\end{proof}

\begin{Proposition}\label{lem:D_implies_independence}
Let $S\subseteq\mathcal{S}(F)$.
If ${\rm D}_S$ holds then the elements of $S$ are pairwise independent.
\end{Proposition}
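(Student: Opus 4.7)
The plan is to deduce the statement from Proposition~\ref{prop:equiv_denseness_completions}. The characterization of ${\rm D}_S$ via denseness in closures (the lemma in Section~2 immediately after Lemma~\ref{lem:UDimpliesD}), combined with Proposition~\ref{prop:equiv_denseness_completions}, shows that the hypothesis ${\rm D}_S$ forces the completion $(\hat F,\hat\mfP)$ of $(F,\mfP)$ to be closed in the appropriate sense for every $\mfP\in S$---real closed when $\mfP$ is an ordering, $p$-adically closed when $\mfP$ is a $p$-valuation.

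I will then argue by contradiction: suppose $\mfP_1\neq\mfP_2$ in $S$ induce the same topology on $F$. Since Cauchy sequences depend only on the topology, the completions with respect to $\mfP_1$ and $\mfP_2$ are canonically identified as a single topological field $\hat F$, on which both $\mfP_i$ extend to primes $\hat\mfP_i$ making $\hat F$ closed. The remark in Section~2 that every real closed field and every $p$-adically closed field carries a unique prime then forces $\hat\mfP_1=\hat\mfP_2$, and restricting back to the dense subfield $F$ gives the contradiction $\mfP_1=\mfP_2$.

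The step requiring the most care is the identification of the two completions and of the two prime extensions on the common $\hat F$: concretely, one uses that the completion of a topological field is determined by the topology alone, and that each $\mfP_i$ extends to a unique topology-inducing prime on $\hat F$ whose restriction to $F$ recovers $\mfP_i$. Both points are standard from the density of $F$ in $\hat F$ together with the existence built into Proposition~\ref{prop:equiv_denseness_completions}. I do not expect a substantial obstacle here; in the mixed case (one ordering and one $p$-valuation) one even obtains an immediate clash of types---a field cannot simultaneously be real closed and $p$-adically closed---without invoking uniqueness of the prime.
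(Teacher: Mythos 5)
Your proof is correct, but it follows a genuinely different route from the one in the paper. The paper works entirely inside $F$: it first establishes a trichotomy for pairs of valuations in whose henselizations $F$ is dense (Lemma \ref{lem:trichotomy}, resting on F.~K.~Schmidt's theorem), and then runs a three-way case analysis (two $p$-adic primes, mixed, two orderings), using Lemma \ref{lem:orderings.A} and the Baer--Krull theorem to reduce the ordering cases to the natural valuation $v_{\leq}$ and to rule out comparability. You instead pass to the completion: dependence of $\mfP_1$ and $\mfP_2$ means they induce the same topology, hence the same additive uniformity and the same completion $\hat F$ (the ring structures agreeing by density and continuity); Proposition \ref{prop:equiv_denseness_completions}, combined with the lemma characterizing ${\rm D}_S$ via denseness in closures, makes $\hat F$ real closed resp.\ $p$-adically closed with respect to both extended primes, and the uniqueness of the prime on such a field (the remark in Section 2) gives $\hat\mfP_1=\hat\mfP_2$ and hence $\mfP_1=\mfP_2$. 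Your argument is shorter and avoids the case distinction, but it shifts the weight onto two nontrivial inputs: the identification of the transfinite-Cauchy-sequence completions of two primes inducing the same topology, including higher-rank valuations and non-archimedean orderings (correct, but worth spelling out, since the paper defines the completion prime by prime), and the Prestel--Roquette fact that a $p$-adically closed field carries no prime besides its $p$-valuation, which is of comparable depth to the F.~K.~Schmidt input of the paper's proof. The paper's longer route also yields Lemma \ref{lem:orderings.A} and Proposition \ref{prp:dense.in.real.closure} as by-products, which are reused later (e.g.\ in Corollary \ref{cor:application_1} and in the remarks of Section \ref{sec:axiom}).
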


\begin{proof}
Let $\mfP$ and $\mfQ$ be distinct primes in $S$, and suppose that $F$ is dense in every closure of
$(F,\mfP)$ and
$(F,\mfQ)$.
Note that if $\mfP$ is a $p$-valuation, this implies in particular
that $F$ is dense in a henselization of $(F,v_\mfP)$.

As a first case, suppose that $\mfP$ is a $p$-valuation
and $\mfQ$ is a $q$-valuation, for prime numbers $p$ and $q$.
Since such valuations are incomparable, and the residue field of a $p$-valuation is finite, by Lemma \ref{lem:trichotomy} we have that $\mfP$ and $\mfQ$ are independent.

Next, suppose that $\mfP$ is an ordering and $\mfQ$ is a $p$-valuation.
If $\leq_{\mfP}$ is archimedean, then it is independent from all valuations and other orderings;
in particular it is independent from $v_{\mfQ}$.
If $\leq_{\mfP}$ is non-archimedean, then the finest $\leq_{\mfP}$-convex valuation
$v_{1}=v_{\leq_{\mfP}}$ is non-trivial.
By Lemma \ref{lem:orderings.A}, $F$ is dense in a henselization of $(F,v_{1})$,
$v_{1}F$ is divisible,
and
$(Fv_{1},\leq_{1})$ is real closed,
where $\leq_{1}$ is the ordering induced on $Fv_{1}$ by $\leq_{\mfP}$.
Since a real closed field admits no $p$-valuations, $v_{1}$ and $v_{\mfQ}$ are incomparable.
Thus, by Lemma \ref{lem:trichotomy},
$v_{1}$ and $v_{\mfQ}$ are independent,
and it follows that
$\mfP$ and $\mfQ$ are independent.

Finally, suppose that $\mfP$ and $\mfQ$ are both orderings.
If either is archimedean, then they are independent;
thus we may assume that neither is archimedean,
and so both $v_{1}=v_{\leq_{\mfP}}$ and $v_{2}=v_{\leq_{\mfQ}}$ are non-trivial.
By Lemma \ref{lem:orderings.A},
$F$ is dense in henselizations of $(F,v_{1})$ and $(F,v_{2})$,
both $v_{1}F$ and $v_{2}F$ are divisible,
and both $(Fv_{1},\leq_{1})$ and $(Fv_{2},\leq_{2})$ are real closed,
where $\leq_{1}$ is the ordering induced on $Fv_{1}$ by $\leq_{\mfP}$, and likewise $\leq_{2}$ is induced on $Fv_{2}$ by $\leq_{\mfQ}$.
We claim that $v_{1}$ and $v_{2}$ are incomparable.
To see this:
suppose without loss of generality that
$v_{1}$ is a (not necessarily proper) refinement of $v_{2}$.
Then $v_{2}$ is $\leq_{\mfP}$-convex, and so $\leq_{\mfP}$ induces an ordering $\leq$ on $Fv_{2}$.
Since the real closed field $Fv_{2}$ admits a unique ordering, we must have $\leq=\leq_{2}$.
Since $v_{2}F$ is divisible, by the Baer--Krull theorem,
we have $\leq_{\mfP}=\leq_{\mfQ}$, i.e.\ $\mfP=\mfQ$,
which is a contradiction, proving the claim.
Since neither $Fv_{1}$ nor $Fv_{2}$ is algebraically closed,
and together with the claim,
it follows from Lemma \ref{lem:trichotomy} that $v_{1}$ and $v_{2}$ are independent,
and therefore $\mfP$ and $\mfQ$ are independent, as required.
\end{proof}

\section{Uniform denseness}
\label{sec:DimpliesUD}

Let $K$ be a number field, $\mfp$ a prime of $K$, $\tau=(e,f)\in\mathbb{N}^{2}$ and $F$ an extension of $K$.
If $\mfp$ is a $p$-valuation let $t_\mfp$ be a uniformizer of $v_\mfp$, 
and let $q_{\mfp}=|Kv_{\mfp}|$ be the size of the residue field.
We equip the set $\mathcal{S}(F)$ of all primes of $F$ with the topology with subbasis of open-closed sets
$$
 \{\mfP\in\mathcal{S}(F) : a\in\mathcal{O}_\mfP \}, \quad a\in F.
 $$
This clearly makes $\mathcal{S}(F)$ a totally disconnected Hausdorff space.

\begin{Lemma}\label{lem:SFcompact}
$\mathcal{S}_\mfp^\tau(F)$ is compact.
\end{Lemma}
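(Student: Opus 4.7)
The plan is to realize $\mcS(F)$ as a subspace of $\{0,1\}^F$ via the embedding $\mfP\mapsto\chi_{\mcO_\mfP}$, observing that the given subbasis $\{\mfP:a\in\mcO_\mfP\}$ coincides with the pullback of the standard subbasis of the product topology. By Tychonoff's theorem $\{0,1\}^F$ is compact, so it suffices to show that the image of $\mcS_\mfp^\tau(F)$ is closed, which I would do by cutting it out as an intersection of closed conditions on $\chi\in\{0,1\}^F$.

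The routine conditions are: the valuation-ring or positive-cone axioms (depending on whether $\mfp$ is non-archimedean or archimedean), each clopen on finitely many coordinates; the restriction condition $\chi(a)=\chi_{\mcO_\mfp}(a)$ for $a\in K$; and, in the $p$-adic case, the bound $f'\mid f$ on the relative residue degree, expressible as the universal statement that $a\in\mcO_\mfP$ implies $v_\mfP(a^{q_\mfp^f}-a)>0$ --- a conjunction over $a\in F$ of disjunctions of clopen conditions.

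The delicate step is the ramification bound $e'\le e$ together with the $\mathbb{Z}$-normalization of $v_\mfP F^\times$, since the existence of a smallest positive value appears at first existential. The key move is to collapse both into the single universal sentence
\[
\forall a\in F^{\times}\colon\;\big(a\in\mcO_\mfP\wedge t_\mfp a^{-(e+1)}\in\mcO_\mfP\big)\;\Longrightarrow\;a^{-1}\in\mcO_\mfP,
\]
whose contrapositive reads $v_\mfP(a)>0\Rightarrow(e+1)v_\mfP(a)>v_\mfP(t_\mfp)$. Given that $v_\mfP$ extends $v_\mfp$ (so $v_\mfP(t_\mfp)>0$), a brief case analysis on the convex-subgroup structure of $v_\mfP F^\times$ shows this is equivalent to: $v_\mfP(t_\mfp)$ lies in the smallest nontrivial convex subgroup $H$ of $v_\mfP F^\times$, $H$ possesses a smallest positive element (hence $H\cong\mathbb{Z}$ by H\"older), and $v_\mfP(t_\mfp)\le e$ in that $\mathbb{Z}$-normalization. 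Verifying this equivalence is the main obstacle; it relies on the chain structure of convex subgroups in an ordered abelian group and on the observation that any non-discrete archimedean value group would produce arbitrarily small positive $\gamma$ violating $(e+1)\gamma>v_\mfP(t_\mfp)$. Once in place, $\mcS_\mfp^\tau(F)$ is the intersection of closed subsets of $\{0,1\}^F$, hence closed and compact.
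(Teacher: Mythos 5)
Your proof is correct, and it is genuinely more self-contained than the paper's: the paper gives no argument at all, but simply cites \cite[Lemma 10.3]{LGP}, which is quoted as saying (i) that the coarser topology on $\mathcal{S}_\mfp^\tau(F)$ generated by the sets $\{\mfP : a\in\mathcal{O}_\mfP\}$ alone is compact and (ii) that on this subspace it coincides with the finer topology in which these sets are clopen. Your route avoids the comparison of topologies entirely by working directly with the clopen topology, i.e.\ the embedding $\mfP\mapsto\chi_{\mathcal{O}_\mfP}$ into $\{0,1\}^F$, and showing the image is closed. The decomposition into closed conditions is sound: the ring/cone axioms, the restriction to $\mfp$ on $K$, and the residue condition $a\in\mathcal{O}_\mfP\Rightarrow a^{q_\mfp^f}-a\in\mathfrak{m}_\mfP$ are all conjunctions of conditions on finitely many coordinates. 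The delicate step is also right, and it is worth recording why your single universal sentence suffices: from $v(a)>0\Rightarrow(e+1)v(a)>v(t_\mfp)$ one gets that any two distinct positive values $\gamma_1<\gamma_2$ satisfy $(e+1)(\gamma_2-\gamma_1)>v(t_\mfp)$, so the interval $(0,v(t_\mfp)]$ of the value group contains at most $e$ elements; hence a smallest positive element $\gamma_0$ exists, the convex subgroup it generates is $\mathbb{Z}\gamma_0$ and is the smallest nontrivial one, and $v(t_\mfp)=e'\gamma_0$ with $e'\le e$ (this also yields finiteness of $(0,v(p)]$, so the valuation is indeed a $p$-valuation). Conversely every $\mfP\in\mathcal{S}_\mfp^\tau(F)$ satisfies the sentence because all positive values are $\ge 1$ while $v_\mfP(t_\mfp)=e'\le e<e+1$. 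What your approach buys is a proof from first principles (Tychonoff plus explicit universal axioms); what the paper's citation buys is brevity and the additional information that the two natural topologies agree on $\mathcal{S}_\mfp^\tau(F)$, a fact you neither use nor establish.
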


\begin{proof}
\cite[Lemma 10.3]{LGP} states that the subspace topology on $\mathcal{S}_\mfp^\tau(F)$ coincides with the subspace topology obtained by declaring the $\{\mfP\in\mathcal{S}(F) : a\in\mathcal{O}_\mfP \}$ to be a subbasis of open sets, and that this gives a compact space.
\end{proof}

\begin{Lemma}\label{lem:SFsw}
Suppose that $\mfp$ is a $p$-valuation, let $\mfP\in\mathcal{S}_\mfp^\tau(F)$ and let $t,s\in F$.
Then $\mfP\in\mathcal{S}_\mfp^{=\tau}(F,t,s)$ if and only if
$t^et_\mfp^{-1}\in\mathcal{O}_\mfP^\times$, $s\in\mathcal{O}_\mfP^\times$
and 
$s^n - 1 \in \mathcal{O}_\mfP^\times$ for all $n | q_\mfp^f - 1$, $n \neq q_\mfp^f - 1$.
\end{Lemma}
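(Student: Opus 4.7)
The plan is to unwind the definitions on both sides. Write $e' = v_\mfP(t_\mfp)$ for the relative ramification of $\mfP$ over $\mfp$, and $f'$ for the relative residue degree, so that $|Fv_\mfP| = q_\mfp^{f'}$. The hypothesis $\mfP \in \mathcal{S}_\mfp^\tau(F)$ gives $1 \leq e' \leq e$ and $f' \mid f$; the goal is to show that the three stated unit conditions pin down $e' = e$, $f' = f$, $v_\mfP(t) = 1$, and that $\bar s$ generates $(Fv_\mfP)^\times$.

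The ``only if'' direction is immediate: if $\mfP \in \mathcal{S}_\mfp^{=\tau}(F,t,s)$, then $v_\mfP(t) = 1$ and $e' = e$, so $v_\mfP(t^e t_\mfp^{-1}) = 0$; and $\bar s$ is a generator of the cyclic group $(Fv_\mfP)^\times$ of order $q_\mfp^f - 1$, so $\bar s^n \neq 1$ for every proper divisor $n$ of $q_\mfp^f - 1$, giving $s^n - 1 \in \mathcal{O}_\mfP^\times$.

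For the converse, the ramification condition $t^e t_\mfp^{-1} \in \mathcal{O}_\mfP^\times$ reads $e \cdot v_\mfP(t) = v_\mfP(t_\mfp) = e'$. Since $e'$ is a positive integer in the smallest nontrivial convex subgroup $\mathbb{Z}$ of $v_\mfP F^\times$, and positive integer multiples of elements strictly above $\mathbb{Z}$ in the order remain strictly above $\mathbb{Z}$, we conclude $v_\mfP(t) \in \mathbb{Z}_{>0}$. Together with $e \cdot v_\mfP(t) = e' \leq e$ this forces $v_\mfP(t) = 1$ and $e' = e$, so $t$ is a uniformizer and the relative ramification is exactly $e$.

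For the residue, $s \in \mathcal{O}_\mfP^\times$ places $\bar s$ in $(Fv_\mfP)^\times$, whose order is $q_\mfp^{f'} - 1$. Since $f' \mid f$ we have $q_\mfp^{f'} - 1 \mid q_\mfp^f - 1$, so the order $d$ of $\bar s$ divides $q_\mfp^f - 1$. The third hypothesis excludes every proper divisor of $q_\mfp^f - 1$ as the order of $\bar s$, yielding $d = q_\mfp^f - 1$; but $d \leq q_\mfp^{f'} - 1 \leq q_\mfp^f - 1$, so $f' = f$ and $\bar s$ generates $(Fv_\mfP)^\times$. The only step that is not pure bookkeeping is the observation that $v_\mfP(t)$ must lie in the smallest nontrivial convex subgroup $\mathbb{Z}$; this uses crucially the normalization of $v_\mfP$ fixed at the start of Section~2, and everything else is elementary divisor chasing in $v_\mfP F^\times$ and in the finite group $(Fv_\mfP)^\times$.
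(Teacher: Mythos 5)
Your proof is correct and matches the paper's approach: the paper simply states that the lemma ``follows directly from the definitions,'' and your argument is a careful unwinding of exactly those definitions (normalization of $v_\mfP$ so that $v_\mfP(t_\mfp)=e'$ lies in the convex subgroup $\mathbb{Z}$, plus order counting in the cyclic group $(Fv_\mfP)^\times$). The convexity argument forcing $v_\mfP(t)\in\mathbb{Z}_{>0}$ and the divisor chase for $f'=f$ are both sound.
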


\begin{proof}
This follows directly from the definitions.
\end{proof}

\begin{Lemma}\label{lem:SFsw_clopen}
If $\mfp$ is a $p$-valuation, then
for every $t,s\in F$, $\mathcal{S}_\mfp^{=\tau}(F,t,s)$ is open-closed in $\mathcal{S}_\mfp^\tau(F)$.
\end{Lemma}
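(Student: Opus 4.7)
The plan is to derive clopenness from the explicit characterization furnished by Lemma \ref{lem:SFsw} together with the fact that the declared subbasis of $\mathcal{S}(F)$ consists of clopen sets.

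First I would verify the following auxiliary observation: for every $a\in F$, the set $U_a^\times := \{\mfP\in\mathcal{S}(F) : a\in\mathcal{O}_\mfP^\times\}$ is clopen in $\mathcal{S}(F)$. If $a=0$ this set is empty, hence clopen. If $a\neq 0$, then $a\in\mathcal{O}_\mfP^\times$ iff $a\in\mathcal{O}_\mfP$ and $a^{-1}\in\mathcal{O}_\mfP$, so $U_a^\times$ is the intersection of two subbasic clopen sets from the subbasis specified just before Lemma \ref{lem:SFcompact}.

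Next I would apply Lemma \ref{lem:SFsw}, which says that for $\mfP\in\mathcal{S}_\mfp^\tau(F)$ one has $\mfP\in\mathcal{S}_\mfp^{=\tau}(F,t,s)$ if and only if
\begin{align*}
\mfP \in U_{t^et_\mfp^{-1}}^\times \cap U_s^\times \cap \bigcap_{\substack{n\mid q_\mfp^f-1 \\ n\neq q_\mfp^f-1}} U_{s^n-1}^\times.
\end{align*}
The intersection on the right is taken over a finite set of divisors, so it is a finite intersection of clopen sets in $\mathcal{S}(F)$, hence clopen in $\mathcal{S}(F)$. Intersecting with $\mathcal{S}_\mfp^\tau(F)$ therefore yields a clopen subset in the subspace topology on $\mathcal{S}_\mfp^\tau(F)$, which is precisely $\mathcal{S}_\mfp^{=\tau}(F,t,s)$.

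There is no real obstacle here; the proof is essentially unpacking Lemma \ref{lem:SFsw} and noting that each of the finitely many unit-conditions is individually clopen. The only minor subtlety is handling the trivial degenerate case where one of the listed elements is zero (so that the corresponding $U_a^\times$ is empty), which only makes the intersection empty and thus still clopen.
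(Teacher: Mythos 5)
Your proof is correct and follows essentially the same route as the paper: the paper's proof is a one-line observation that each of the finitely many conditions in Lemma \ref{lem:SFsw} defines an open-closed subset, and you have simply unpacked why each unit-condition $a\in\mathcal{O}_\mfP^\times$ is clopen (intersection of the two subbasic sets for $a$ and $a^{-1}$) and noted that the intersection is finite. The extra care about the degenerate case $a=0$ is harmless and fine.
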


\begin{proof}
As each of the finitely many conditions in the previous lemma defines an open-closed subset of $\mathcal{S}_\mfp^\tau(F)$,
also $\mathcal{S}_\mfp^{=\tau}(F,t,s)$ is open-closed.
\end{proof}

Next we quote two approximation results from \cite{ADF1}:

\begin{Proposition}\label{prop:value_approximation}
Suppose that $\mfp$ is a $p$-valuation.
  Let $S_1, \dotsc, S_n \subseteq \mathcal{S}_\mfp^\tau(F)$ be disjoint compact sets, and $z_1, \dotsc, z_n \in F^\times$.
  If for every valuation $v$ on $F$ coarsening some prime in $S_i$ and some prime in $S_j$, for some $i,j$, we have $v(z_i) = v(z_j)$, then there exists a $z \in F^\times$ with $v_\mfP(z) = v_\mfP(z_i)$ for all $\mfP \in S_i$, for $i = 1, \dotsc, n$.
\end{Proposition}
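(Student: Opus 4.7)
The plan is to reduce to a finite system via compactness of $\mathcal{S}_\mfp^\tau(F)$ (Lemma \ref{lem:SFcompact}) and then apply a classical approximation theorem for finitely many valuations with values prescribed consistently on common coarsenings (a Ribenboim-style approximation theorem as in \cite{Ershov_multivalued}).

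In the finite case, namely when each $S_i$ consists of finitely many primes $\mfP_1^{(i)}, \dotsc, \mfP_{m_i}^{(i)}$, I would observe that the compatibility hypothesis of the proposition translates exactly into the consistency condition on common coarsenings required by Ribenboim-style approximation. Indeed, if $v$ is any common coarsening of $v_{\mfP_k^{(i)}}$ and $v_{\mfP_\ell^{(j)}}$, then the hypothesis forces $v(z_i)=v(z_j)$, which is precisely what Ribenboim's theorem needs in order to produce $z \in F^\times$ realising all the prescribed values simultaneously.

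To reduce the compact case to the finite one, I would exploit that for any fixed $w \in F^\times$ the set $\{\mfP \in \mathcal{S}_\mfp^\tau(F) : v_\mfP(z) = v_\mfP(w)\} = \{\mfP : z w^{-1} \in \mathcal{O}_\mfP^\times\}$ is clopen, being the intersection of the two clopen conditions $z w^{-1} \in \mathcal{O}_\mfP$ and $w z^{-1} \in \mathcal{O}_\mfP$. Using disjointness and compactness of the $S_i$ in the totally disconnected Hausdorff space $\mathcal{S}_\mfp^\tau(F)$, I would choose a finite clopen partition $\{W_\alpha\}_\alpha$ of $\bigcup_i S_i$ refining the decomposition into the $S_i$, so that each $W_\alpha$ lies in a unique $S_{i(\alpha)}$. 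Pick a representative $\mfP_\alpha \in W_\alpha$ and apply the finite approximation theorem to $\{\mfP_\alpha\}_\alpha$ with prescribed values $v_{\mfP_\alpha}(z_{i(\alpha)})$; the compatibility hypothesis is inherited by the representatives since any common coarsening of $v_{\mfP_\alpha}$ and $v_{\mfP_\beta}$ is in particular a common coarsening of a prime in $S_{i(\alpha)}$ and one in $S_{i(\beta)}$.

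The main obstacle will be arranging the clopen partition finely enough that the $z$ produced by finite approximation at the representatives $\mfP_\alpha$ actually matches the prescribed value throughout each $W_\alpha$, not only at $\mfP_\alpha$ itself. I would tackle this by a bootstrap argument: produce a candidate $z$, refine the partition using the clopen sets $\{\mfP : v_\mfP(z/z_{i(\alpha)}) = 0\}$, and iterate while controlling finitely many value-data at each step; alternatively, one chooses the initial clopen partition sufficiently fine using the input data $z_1, \dotsc, z_n$ together with a uniformiser at each prime, so that the value $v_\mfP(z_{i(\alpha)})$ behaves uniformly on each $W_\alpha$ modulo the archimedean component $\mathbb{Z}$ of the value group. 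Making either reduction rigorous — in particular, controlling the interaction between common coarsenings of primes in different pieces of the partition — is the technical heart of the argument.
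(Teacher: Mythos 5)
There is a genuine gap, and it sits exactly where you locate it yourself: the passage from the finite case to the compact case. The element $z$ produced by a Ribenboim-style approximation theorem at finitely many representatives $\mfP_\alpha$ is only guaranteed to satisfy $v_{\mfP_\alpha}(z)=v_{\mfP_\alpha}(z_{i(\alpha)})$ at those representatives; the set $\{\mfP\in W_\alpha : z\,z_{i(\alpha)}^{-1}\in\mathcal{O}_\mfP^\times\}$ is indeed a clopen neighbourhood of $\mfP_\alpha$, but it depends on $z$, which is only constructed \emph{after} the partition is fixed, so compactness cannot be invoked to cover $W_\alpha$. Your two proposed fixes do not close this circle: the bootstrap has no termination argument (each refinement may leave new uncovered primes, and there is no measure of progress), and the idea of choosing the initial partition "sufficiently fine using the input data" founders on the fact that for distinct $\mfP,\mfQ$ in the same piece the values $v_\mfP(z_i)$ and $v_\mfQ(z_i)$ live in different groups and the condition to be achieved is $z z_i^{-1}\in\mathcal{O}_\mfP^\times$ uniformly on the piece --- which is precisely the statement being proved, not a preprocessing step. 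In general, an infinite compact family of pairwise incomparable valuations can satisfy approximation at every finite subfamily without satisfying it globally; upgrading finite to compact approximation requires additional structural input on the space of primes.

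For comparison, the paper does not attempt a self-contained argument at all: it invokes \cite[Corollary 5.1]{ADF1}, a strong approximation theorem for compact spaces of localities, and merely verifies its hypotheses --- condition (U) (a uniformity condition on the topology, checked in \cite[Example 3.5(2,3)]{ADF1}) and condition (I) (pairwise incomparability of distinct elements of $\mathcal{S}_\mfp^\tau(F)$). The content you are missing is exactly the content of that external result. If you want a proof along your lines, you would need to reprove the compact approximation theorem of \cite{ADF1}, which is where conditions such as (U) enter to make the finite-to-compact passage work; your finite-case analysis and the observation that the compatibility hypothesis is inherited by representatives are fine as far as they go, but they are the easy part.
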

\begin{proof}
This is an application of \cite[Corollary 5.1]{ADF1}, for which we verify the hypotheses.
Condition (U) holds as shown in \cite[Example 3.5(2,3)]{ADF1}.
Condition (I) follows from the fact that distinct elements of $\mathcal{S}_{\mfp}^{\tau}(F)$ are pairwise incomparable.
\end{proof}

\newcommand\B{\mathrm{B}}
For the next proposition we will need the notion of a ball with respect to a prime $\mfP$ of $F$: if $\mfP$ is a $p$-valuation we write
$$
    \B_{\mfP}(y,z):=\{x\in F:v_{\mfP}(x-y)>v_{\mfP}(z)\},
$$
whereas if $\mfP$ is an ordering we write
$$
    \B_{\mfP}(y,z):=\{x\in F:|x-y|<_{\mfP}|z|\},
$$
for $y\in F$ and $z\in F^{\times}$.

\begin{Proposition}\label{prop:approxfunctions}
Let
$S\subseteq\mathcal{S}_{\mfp}^{\tau}(F)$ be compact,
and suppose that the elements of
$S$
are pairwise independent.
Let $\gamma_1,\dots,\gamma_m\in F(X_1,\dots,X_r)$ and write $\gamma_j=\frac{g_j}{h_j}$ with $g_j,h_j\in F[X_1,\dots,X_r]$.
For each $j=1,\dots,m$, let $y_{j}\in F$, $z_{j}\in F^\times$ be given.
If for each $\mfP\in S$
there exists $x_\mfP\in F^r$ such that 
$h_j(x_\mfP)\neq 0$ and
$\gamma_j(x_\mfP)\in\B_\mfP(y_{j},z_{j})$ for each $j$,
then there exists $x\in F^r$ such that
$h_j(x)\neq 0$ and $\gamma_j(x)\in\B_\mfP(y_{j},z_{j})$ for each $j$ and $\mfP\in S$.
\end{Proposition}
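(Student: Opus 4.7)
The plan is to use compactness of $S$ to reduce the pointwise hypothesis to a finite clopen partition on which single witnesses work, and then to patch these witnesses into a global $x\in F^r$ using the pairwise independence of the primes in $S$ together with an approximation argument.

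For the first step, for each $\mfP\in S$ I would consider
\[
U_\mfP := \{\mfQ\in S : h_j(x_\mfP)\neq 0 \text{ and } \gamma_j(x_\mfP)\in\B_\mfQ(y_j,z_j) \text{ for } j=1,\dots,m\}
\]
and check that $U_\mfP$ is clopen in $S$. For fixed $x_\mfP$, each condition $\gamma_j(x_\mfP)\in\B_\mfQ(y_j,z_j)$ translates into a Boolean combination of the subbasic open-closed sets $\{\mfQ : a\in\mathcal{O}_\mfQ\}$: in the $p$-valuation case it says that $(\gamma_j(x_\mfP)-y_j)/z_j$ lies in the maximal ideal of $\mathcal{O}_\mfQ$, and in the ordering case it is a strict polynomial inequality at $\mfQ$. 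Since $\mfP\in U_\mfP$ for every $\mfP$, the $U_\mfP$ cover $S$, and by Lemma \ref{lem:SFcompact} finitely many suffice. Refining via Boolean operations in the clopen algebra yields a clopen partition $S=S_1\sqcup\dots\sqcup S_n$ on which the single tuple $x_i:=x_{\mfP_i}\in F^r$ is a witness for every $\mfP\in S_i$.

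The second step is to produce $x\in F^r$ whose components approximate $x_i^{(k)}$ sufficiently closely at every $\mfP\in S_i$ that the conditions $h_j(x)\neq 0$ and $\gamma_j(x)\in\B_\mfP(y_j,z_j)$ transfer from $x_i$ to $x$ by continuity of the rational functions $\gamma_j$ at the non-singular points $x_i$. Coordinate-wise this reduces to finding $x^{(k)}\in F$ with $v_\mfP(x^{(k)}-x_i^{(k)})$ (respectively $|x^{(k)}-x_i^{(k)}|_\mfP$) exceeding (respectively falling below) a prescribed threshold at every $\mfP\in S_i$. Pairwise independence of the primes in $S$ makes the coarsening-compatibility hypothesis in Proposition \ref{prop:value_approximation} vacuous, since two independent valuations share no non-trivial common coarsening; applying that proposition in a partition-of-unity style construction (with the ordering-analogue from \cite{ADF1} used when $\mfp$ is an ordering) then produces $x$.

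The main obstacle is achieving uniformity of approximation over each possibly infinite $S_i$, i.e.\ the closeness of $x^{(k)}$ to $x_i^{(k)}$ must hold at every $\mfP\in S_i$ simultaneously. The clopen-ness of the partition means that only finitely many approximation tasks are needed, one per $S_i$; within each $S_i$, the single element $t_\mfp^N\in F^\times$ realizes valuation at least $N$ at every $\mfP\in\mathcal{S}_\mfp^\tau(F)$, furnishing a uniform target. Once $x$ has been constructed with $v_\mfP(x^{(k)}-x_i^{(k)})$ sufficiently large on $S_i$, explicit Taylor-style estimates on the $\gamma_j$ combined with the non-vanishing $h_j(x_i)\neq 0$ and the fact that $x_i$ itself witnesses the desired inclusions on $S_i$ confirm that $h_j(x)\neq 0$ and $\gamma_j(x)\in\B_\mfP(y_j,z_j)$ hold at every $\mfP\in S$.
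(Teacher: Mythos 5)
The paper's own proof is a one-line application of the block approximation theorem \cite[Corollary 7.8]{ADF1}, so the substance of the statement lives in that external result. Your first step (for each $\mfP$ the set $U_\mfP$ of primes at which the fixed witness $x_\mfP$ still works is open-closed, so by Lemma \ref{lem:SFcompact} one gets a finite clopen partition $S=S_1\sqcup\dots\sqcup S_n$ with one witness $x_i$ per block) is correct and is indeed how such arguments begin. The problem is the second step: patching the $x_i$ into a single $x$.

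The gap is that you need a \emph{simultaneous approximation} statement of the form ``given the clopen partition $S=S_1\sqcup\dots\sqcup S_n$ and tuples $x_1,\dots,x_n$, there is $x\in F^r$ with $x$ close to $x_i$ at every $\mfP\in S_i$, uniformly in $\mfP$,'' and nothing you invoke delivers it. Proposition \ref{prop:value_approximation} only prescribes the \emph{value} $v_\mfP(z)$ of one element at each prime; it does not produce an element close to prescribed targets, and it is stated only for $p$-valuations. The classical weak approximation theorem for pairwise independent primes handles finitely many primes, but each block $S_i$ is in general infinite, and having a uniform target radius (your $t_\mfp^N$) does not by itself yield a single $x$ meeting infinitely many approximation conditions at once. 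This uniform patching over compact blocks of pairwise independent primes is exactly the content of \cite[Corollary 7.8]{ADF1} (together with its condition (U), verified in \cite[Example 3.5(2,3)]{ADF1}), whose proof is genuinely nontrivial; your sketch in effect assumes it. A secondary, fixable issue: even granting the patching, the threshold of closeness needed for your ``Taylor-style'' continuity transfer depends on $\mfP$ (through $v_\mfP(h_j(x_i))$, $v_\mfP(z_j)$, etc.), so making it uniform over each $S_i$ requires a further compactness argument that you do not carry out.
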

\begin{proof}
This is an application of \cite[Corollary 7.8]{ADF1}.
Note that $\mathcal{S}_{\mfp}^{\tau}(F)$ satisfies 
condition (U) of that corollary by \cite[Example 3.5(2,3)]{ADF1}.
\end{proof}

For a set $S\subseteq\mathcal{S}(F)$ of primes of $F$ and an
$\mathcal{L}_R(F)$-sentence $\varphi$ we define
$$
 \varphi(S) := \{\mfP\in S:(F,\mathcal{O}_\mfP)\models\varphi \}
$$
and
$$
 \varphi'(S) := \{\mfP\in S:(F',\mathcal{O}_{\mathfrak{P}'})\models\varphi\mbox{ for each closure }(F',\mfP')\mbox{ of }(F,\mfP)\}.
$$

\begin{Lemma}\label{lem:local_condition_qf}
If $\varphi$ is a quantifier-free $\mathcal{L}_R(F)$-sentence and $S\subseteq\mathcal{S}(F)$, 
then $\varphi(S)=\varphi'(S)$ is open-closed in $S$.
\end{Lemma}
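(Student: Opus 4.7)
The plan is to unwind the definitions and observe that a quantifier-free sentence in $\mathcal{L}_R(F)$ simplifies drastically once all elements of $F$ are available as constants. Since every $\mathcal{L}_{\rm ring}(F)$-term is built from constants in $F$ using the ring operations, it evaluates to a single element of $F$; hence every atomic $\mathcal{L}_R(F)$-sentence has the form $a=b$ or $R(c)$ with $a,b,c\in F$. Consequently $\varphi$ is, up to logical equivalence, a Boolean combination of conditions of the form $R(c_i)$, with $i$ ranging over a finite set, possibly conjoined with truth values coming from equalities $a=b$ which hold or fail in $F$ independently of the predicate.

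Next I would prove openness-closedness. For each $c\in F$ the set $\{\mfP\in\mathcal{S}(F):c\in\mathcal{O}_\mfP\}$ is, by the very definition of the topology on $\mathcal{S}(F)$ recalled at the start of the section, a subbasic open-closed set. Restricting to $S$ and taking the finite Boolean combination prescribed by $\varphi$ keeps the set open-closed, so $\varphi(S)$ is open-closed in $S$.

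For the equality $\varphi(S)=\varphi'(S)$, I would verify that each atomic condition appearing in $\varphi$ has the same truth value in $(F,\mathcal{O}_\mfP)$ as in any closure $(F',\mathcal{O}_{\mfP'})$ of $(F,\mfP)$. Equalities $a=b$ with $a,b\in F$ are of course invariant. For $R(c)$ with $c\in F$, it suffices to check $\mathcal{O}_{\mfP'}\cap F=\mathcal{O}_\mfP$: if $\mfP$ is a $p$-valuation then $v_{\mfP'}$ extends $v_\mfP$, and if $\mfP$ is an ordering then $\leq_{\mfP'}$ extends $\leq_\mfP$ and $\mathcal{O}_\mfP,\mathcal{O}_{\mfP'}$ are the corresponding positive cones. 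In either case the restriction to $F$ yields $\mathcal{O}_\mfP$. Hence the truth value of $\varphi$ in $(F,\mathcal{O}_\mfP)$ agrees with its truth value in every closure, which gives $\varphi(S)=\varphi'(S)$.

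There is no real obstacle here: the whole lemma is essentially a bookkeeping observation that ``quantifier-free with all parameters from $F$'' collapses to a finite Boolean combination of the subbasic clopens defining the topology on $\mathcal{S}(F)$, and that the unary predicate $R$ behaves well under taking closures. The only point deserving a line of justification is the identity $\mathcal{O}_{\mfP'}\cap F=\mathcal{O}_\mfP$ in the ordered case, which is immediate from the convention that $\mathcal{O}_\mfP$ denotes the positive cone.
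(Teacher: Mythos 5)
Your proof is correct and follows essentially the same route as the paper's: reduce to atomic sentences (which, since all parameters come from $F$, are of the form $a=b$ or $R(c)$), use closure of open-closed sets under Boolean operations together with the definition of the subbasis, and observe $\mathcal{O}_{\mfP'}\cap F=\mathcal{O}_\mfP$ for the identity $\varphi(S)=\varphi'(S)$. The paper's version is terser and leaves the last point implicit, but there is no substantive difference.
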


\begin{proof}
Since the family of open-closed sets is closed under boolean operations we can reduce
to atomic sentences $a=0$ and $R(a)$, with $a\in F$.
For $a=0$ the statement is trivial,
while for $R(a)$ it follows from the definition of the topology on $\mathcal{S}(F)$.
\end{proof}

\begin{Lemma}\label{prop:universal}
There exists a recursive map $\varphi\mapsto\varphi_\mfp^\tau$ from $\mathcal{L}_R(K)$-sentences to universal $\mathcal{L}_R(K)$-sentences
such that for every $\mfP\in\mathcal{S}_\mfp^{=\tau,\mathbb{Z}}(F)$ and every closure $(F',\mfP')$ of $(F,\mfP)$ the following are equivalent:
\begin{enumerate}
\item $(F',\mathcal{O}_{\mfP'})\models\varphi$
\item $(F',\mathcal{O}_{\mfP'})\models\varphi_\mfp^\tau$
\item $(F,\mathcal{O}_\mfP)\models\varphi_\mfp^\tau$ 
\end{enumerate}
\end{Lemma}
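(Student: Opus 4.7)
The strategy is to reduce to the decidability of the first-order theory of closures of $(K, \mfp)$ of relative type $\tau$. The plan has two main ingredients: a completeness assertion, so that the truth of $\varphi$ in any closure $F'$ is determined by $(K, \mfp, \tau)$ alone, and effective decidability, so that $\varphi_\mfp^\tau$ can be produced recursively.

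First I would argue that for every $\mfP \in \mathcal{S}_\mfp^{=\tau, \mathbb{Z}}(F)$ and every closure $(F', \mfP')$ of $(F, \mfP)$, the $\mathcal{L}_R(K)$-theory of $(F', \mathcal{O}_{\mfP'})$ depends only on $(K, \mfp, \tau)$. The key step is to verify that the relative algebraic closure $F' \cap \overline K$ is itself a closure of $(K, \mfp)$ of relative type $\tau$: in the $p$-adic case, the extension $F'/F$ is immediate (since $v_\mfP F^\times$ is a $\mathbb Z$-group), so the relative type over $\mfp$ is preserved, and Hensel's lemma inside $F'$ produces elements algebraic over $K$ whose residues and values realize the full residue extension and the full ramification; in the ordering case $F' \cap \overline K$ is simply the real closure of $(K, \leq_\mfp)$. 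Closures of $(K, \mfp)$ of type $\tau$ are unique up to $K$-isomorphism (uniqueness of real closures; Prestel--Roquette's uniqueness of $p$-adic closures of fixed type with $\mathbb Z$-group value group). Moreover, by model completeness of the closure theory in $\mathcal{L}_R$ (Tarski in the real case; Prestel--Roquette in the $p$-adic case, using that the valuation ring is $\mathcal{L}_{\rm ring}$-definable in $p$-adically closed fields), the inclusion $F' \cap \overline K \preceq F'$ is elementary. Together, all the $(F', \mathcal{O}_{\mfP'})$ arising in the lemma share a common complete $\mathcal{L}_R(K)$-theory, which I denote $T^{\rm cl}_{\mfp, \tau}$.

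Next I would invoke effective decidability of $T^{\rm cl}_{\mfp, \tau}$: Tarski's decision procedure in the real case, and Ax--Kochen--Ershov together with Prestel--Roquette in the $p$-adic case, all uniformly in $K, \mfp, \tau$ (which are effectively presented). Given an $\mathcal{L}_R(K)$-sentence $\varphi$, I would decide whether $T^{\rm cl}_{\mfp, \tau} \vdash \varphi$ and set $\varphi_\mfp^\tau := \forall x\, (x = x)$ if so, and $\varphi_\mfp^\tau := \forall x\, (x \neq x)$ otherwise. This gives a recursive map from $\mathcal{L}_R(K)$-sentences to universal $\mathcal{L}_R(K)$-sentences, and the three equivalences of the lemma follow at once: (1) is equivalent to $T^{\rm cl}_{\mfp, \tau} \vdash \varphi$ by the completeness above, while (2) and (3) both reduce to asking whether $\varphi_\mfp^\tau$ is the tautology as opposed to the contradiction, a question about our syntactic choice which is absolute across all $\mathcal{L}_R$-structures.

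The main obstacle is the first, completeness step: verifying carefully that $F' \cap \overline K$ realizes the full relative type $\tau$ over $(K, \mfp)$, and invoking model completeness of the closure theory in the language $\mathcal{L}_R$. Once this is in place, the decidability and the construction of the recursive map follow from classical results, and the triviality of the chosen universal sentences makes the equivalences (2) $\Leftrightarrow$ (3) automatic.
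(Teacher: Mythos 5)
Your strategy is genuinely different from the paper's --- the paper deduces the lemma from \emph{model completeness} of the theories of real closed and $p$-adically closed fields of fixed type, via \cite[Prop.~8.2]{LGP} --- and unfortunately it fails at its first step: the completeness claim is false in the $p$-adic case. For $\tau\neq(1,1)$ there are several pairwise non-isomorphic, and indeed non-elementarily-equivalent, $p$-adically closed algebraic extensions of $(K,v_\mfp)$ of relative type $\tau$, so there is no complete theory $T^{\rm cl}_{\mfp,\tau}$. Concretely, take $K=\mathbb{Q}$, $\mfp=p$ odd, $\tau=(2,1)$, and let $u$ be a non-square modulo $p$. The fields $F_1=\mathbb{Q}(\sqrt{p})$ and $F_2=\mathbb{Q}(\sqrt{up})$ each carry a prime $\mfP_i\in\mathcal{S}_p^{=\tau,\mathbb{Z}}(F_i)$ (both are totally ramified at $p$, with value group $\mathbb{Z}$), but the closure of $(F_1,\mfP_1)$ satisfies $\exists x\,(x^2=p)$ while the closure of $(F_2,\mfP_2)$ does not (a square root of $p$ there would produce a square root of $u$, forcing residue degree $2$). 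One can even place both primes on a single quartic field $F$ with $F\otimes_{\mathbb{Q}}\mathbb{Q}_p\cong\mathbb{Q}_p(\sqrt{p})\times\mathbb{Q}_p(\sqrt{up})$. Since your $\varphi_\mfp^\tau$ is by construction a fixed tautology or a fixed contradiction, independent of $F$ and $\mfP$, it cannot track the truth value of $\varphi$ in the closure, and (1)$\Leftrightarrow$(2) fails for $\varphi\equiv\exists x\,(x^2=p)$. Relatedly, your appeal to ``uniqueness of $p$-adic closures of fixed type'' is a misreading of Prestel--Roquette: uniqueness up to $F$-isomorphism holds for the closure of a fixed $p$-valued field $(F,v_\mfP)$ whose value group is a $\mathbb{Z}$-group, not for all $p$-adically closed extensions of $(K,v_\mfp)$ of a given relative type.

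The correct mechanism is model completeness rather than completeness: modulo the (incomplete) theory of closures, every sentence is effectively equivalent to a universal one, $\varphi_\mfp^\tau$, whose truth value in the substructure $(F,\mathcal{O}_\mfP)$ --- which genuinely varies with $F$ and $\mfP$ --- detects the truth of $\varphi$ in the closure; the hypothesis that $v_\mfP(F^\times)$ is a $\mathbb{Z}$-group enters to guarantee that the closure of $(F,\mfP)$ is unique over $F$, so that (3) can determine (1). Your decidability-based construction discards exactly this dependence on $(F,\mathcal{O}_\mfP)$. (In the ordering case your argument does go through for parameter-free $\mathcal{L}_R(K)$-sentences, since RCF together with the diagram of $(K,\leq_\mfp)$ is complete and decidable; but the lemma must also cover the $p$-adic case, and it is applied in Proposition \ref{prop:local_condition_clopen} to sentences with parameters from $F$, where even the real case of your argument would fail.)
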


\begin{proof}
This is a consequence of model completeness, see \cite[Prop.~8.2]{LGP}.
\end{proof}

\begin{Proposition}\label{prop:local_condition_clopen}
Let $S \subseteq \mathcal{S}_\mfp^{=\tau,\mathbb{Z}}(F)$.
Then $\varphi'(S)$ is open-closed in $S$
for every $\mathcal{L}_R(F)$-sentence $\varphi$.
\end{Proposition}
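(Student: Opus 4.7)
The plan is to leverage model-completeness, via Lemma \ref{prop:universal}, to replace the quantification over closures in the definition of $\varphi'(S)$ by a universal sentence evaluated in $(F,\mathcal{O}_\mfP)$ itself, and then to observe that any universal sentence cuts out a closed subset of $S$. Applying the same reduction simultaneously to $\neg\varphi$ will make the complement closed too, yielding the open-closed property.

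First, by the hypothesis $S\subseteq\mathcal{S}_\mfp^{=\tau,\mathbb{Z}}(F)$, every $\mfP\in S$ admits a closure unique up to $F$-isomorphism, so the universal quantification over closures in the definition of $\varphi'(S)$ collapses to a single condition on the closure $(F',\mfP')$. Lemma \ref{prop:universal} then yields a universal sentence $\varphi_\mfp^\tau$ such that $\mfP\in\varphi'(S)$ if and only if $(F,\mathcal{O}_\mfP)\models\varphi_\mfp^\tau$. Strictly speaking, Lemma \ref{prop:universal} is stated for $\mathcal{L}_R(K)$-sentences, but the same argument (model-completeness of the closure theory cited from \cite{LGP}) produces a universal equivalent for arbitrary $\mathcal{L}_R(F)$-sentences, either by adjoining parameters from $F$ to the language or by absorbing them into the universal prefix. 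Applying the same construction to $\neg\varphi$ in place of $\varphi$ yields a further universal $\mathcal{L}_R(F)$-sentence $\chi=(\neg\varphi)_\mfp^\tau$ such that $\mfP\in S\setminus\varphi'(S)$ if and only if $(F,\mathcal{O}_\mfP)\models\chi$.

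Second, for any universal sentence $\forall \bar{x}\,\psi_0(\bar{x})$ with $\psi_0$ quantifier-free,
$$
\{\mfP\in S:(F,\mathcal{O}_\mfP)\models\forall \bar{x}\,\psi_0(\bar{x})\}=\bigcap_{\bar{a}\in F^n}\{\mfP\in S:(F,\mathcal{O}_\mfP)\models\psi_0(\bar{a})\},
$$
and Lemma \ref{lem:local_condition_qf} shows that each set on the right is open-closed in $S$, so the intersection is closed in $S$. Applying this to $\varphi_\mfp^\tau$ shows that $\varphi'(S)$ is closed in $S$; applying it to $\chi$ shows that $S\setminus\varphi'(S)$ is closed in $S$; together, $\varphi'(S)$ is open-closed in $S$.

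The only delicate point is the first step: the symmetric application of Lemma \ref{prop:universal} to both $\varphi$ and $\neg\varphi$, and the need to allow $\mathcal{L}_R(F)$-sentences rather than just $\mathcal{L}_R(K)$-sentences. Both extensions follow directly from the model-completeness already invoked in the proof of Lemma \ref{prop:universal} and require no essentially new input.
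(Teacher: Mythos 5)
Your proof is correct and is essentially the paper's own argument: both use Lemma \ref{prop:universal} to replace $\varphi$ and $\neg\varphi$ by universal sentences evaluated in $(F,\mathcal{O}_\mfP)$, then write $\varphi'(S)$ and its complement as intersections of the open-closed sets supplied by Lemma \ref{lem:local_condition_qf}. The point you flag about passing from $\mathcal{L}_R(K)$ to $\mathcal{L}_R(F)$ by absorbing parameters is handled the same way (implicitly) in the paper.
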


\begin{proof}
By Lemma \ref{prop:universal}
we can find a quantifier-free $\mathcal{L}_R(F)$-formula $\psi_1(x)$, $x=(x_1,\dots,x_r)$,  such that with $\varphi_1\equiv \forall x\psi_1(x)$
we have $\varphi'(S)=\varphi_1'(S)=\varphi_1(S)$.
Thus,
\begin{align*}
	\varphi'(S) &= \bigcap_{a\in F^r}\psi_1(a)(S).
\end{align*}
Similarly, we find a quantifier-free $\mathcal{L}_R(F)$-formula $\psi_2(x)$ with
$$
 S\setminus\varphi'(S) = (\neg\varphi)'(S) = \bigcap_{a\in F^r}\psi_2(a)(S).
$$ 
By Lemma \ref{lem:local_condition_qf}, each $\psi_i(a)(S)$ is in particular closed,
so $\varphi'(S)$ is open-closed.
\end{proof}

We are now able to conclude that denseness 
implies uniform denseness, for certain compact sets of primes.

\begin{Proposition}\label{prop:D_implies_UD}
For every compact subset $S\subseteq\mathcal{S}_\mfp^{=\tau}(F)$,
${\rm D}_S$ implies ${\rm UD}_S$. 
\end{Proposition}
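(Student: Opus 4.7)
The plan is to fix a monic $g\in F[X]$ and $a\in F^\times$, restrict attention to the set $S_g := \{\mfP\in S : g \text{ has a zero in some closure of } (F,\mfP)\}$ (on $S\setminus S_g$ the ${\rm UD}$-condition is vacuous), and produce a single $x\in F$ witnessing ${\rm UD}$ for every $\mfP\in S_g$ simultaneously. The engine will be Proposition \ref{prop:approxfunctions}, applied once we have verified that $S_g$ is compact and that the primes in it are pairwise independent.

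First I would show that $S\subseteq\mathcal{S}_\mfp^{=\tau,\mathbb{Z}}(F)$. Lemma 2.2 translates ${\rm D}_S$ into the statement that $F$ is dense in every closure $(F',\mfP')$ of $(F,\mfP)$ for $\mfP\in S$; together with Lemma \ref{lem:denseness_preserves_val_grp_res_fld}, this forces the value group of any $p$-valuation $\mfP\in S$ to coincide with $v_{\mfP'}F'$, which is a $\mathbb{Z}$-group. With $S$ lying in $\mathcal{S}_\mfp^{=\tau,\mathbb{Z}}(F)$, I would apply Proposition \ref{prop:local_condition_clopen} to the $\mathcal{L}_R(F)$-sentence $\varphi \equiv \exists x\,(g(x)=0)$ (transferred between $(F,\mathcal{O}_\mfP)$ and its closure via Lemma \ref{prop:universal}) to conclude that $S_g = \varphi'(S)$ is open-closed in $S$. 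Since $S$ is compact (being a closed subset of the compact space $\mathcal{S}_\mfp^\tau(F)$ by Lemma \ref{lem:SFcompact}), so is $S_g$.

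Next, Proposition \ref{lem:D_implies_independence} applied to ${\rm D}_S$ shows that the primes in $S$, and hence in $S_g$, are pairwise independent, so the hypotheses of Proposition \ref{prop:approxfunctions} are in place. For each $\mfP\in S_g$, density of $F$ in the closure of $(F,\mfP)$ together with continuity of $g$ at any zero of $g$ in that closure gives $x_\mfP\in F$ with $g(x_\mfP)\in \B_\mfP(0,a)$. Applying Proposition \ref{prop:approxfunctions} with $m=1$, $\gamma_1=g$ (so $h_1=1$ is nowhere zero), $y_1=0$, $z_1=a$, $r=1$ and $S$ there equal to our $S_g$, yields a single $x\in F$ with $g(x)\in \B_\mfP(0,a)$ for every $\mfP\in S_g$. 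Unwinding the definition of $\B_\mfP(0,a)$ via the dictionary established in the proof of Lemma 2.2, this inclusion means $v_\mfP(g(x))>v_\mfP(a)$ if $\mfP$ is a $p$-valuation, and $|g(x)|<_\mfP|a|$ if $\mfP$ is an ordering; both imply $1-g(x)^2a^{-2}\in \mathcal{O}_\mfP$. Since the ${\rm UD}$-condition is trivial on $S\setminus S_g$, this $x$ establishes ${\rm UD}_S$.

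The main obstacle, or rather the most delicate point, is the compactness of $S_g$: one must go through the observation that ${\rm D}_S$ already forces $S$ into $\mathcal{S}_\mfp^{=\tau,\mathbb{Z}}(F)$ before Proposition \ref{prop:local_condition_clopen} (whose $\mathbb{Z}$-group hypothesis reflects uniqueness of closures and the resulting model completeness from Lemma \ref{prop:universal}) becomes applicable. Everything after that is a clean assembly: pairwise independence from Proposition \ref{lem:D_implies_independence}, local solutions from denseness in closures, and the single uniform $x$ from Proposition \ref{prop:approxfunctions}.
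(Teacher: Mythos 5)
Your proposal is correct and follows essentially the same route as the paper: deduce $S\subseteq\mathcal{S}_\mfp^{=\tau,\mathbb{Z}}(F)$ from Lemma \ref{lem:denseness_preserves_val_grp_res_fld}, get pairwise independence from Proposition \ref{lem:D_implies_independence}, use Proposition \ref{prop:local_condition_clopen} to see that $S_g$ is open-closed (hence compact) in $S$, and then glue the local witnesses with Proposition \ref{prop:approxfunctions} in the case $m=r=1$. Your use of denseness in the closure to get the strict ball condition $g(x_\mfP)\in\B_\mfP(0,a)$ is a reasonable (and slightly more careful) way to match the strict inequalities in Proposition \ref{prop:approxfunctions}; note only that compactness of $S$ is a hypothesis of the proposition rather than a consequence of $\mathcal{S}_\mfp^{=\tau}(F)$ being closed in $\mathcal{S}_\mfp^{\tau}(F)$, which it need not be.
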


\begin{proof}
Suppose that ${\rm D}_{S}$ holds.
Then $S$ is contained in $\mathcal{S}_\mfp^{=\tau,\mathbb{Z}}(F)$
by Lemma \ref{lem:denseness_preserves_val_grp_res_fld},
and the elements of $S$ are pairwise independent by
Proposition \ref{lem:D_implies_independence}.
Let $g\in F[X]$ monic and $a\in F^\times$.
By Proposition \ref{prop:local_condition_clopen}, 
the set $S_g$ of $\mfP\in S$ 
for which $g$ has a zero in some (or equivalently, every) closure of $(F,\mfP)$ is open-closed in $S$,
therefore in particular compact.
For each $\mfP\in S_g$, ${\rm D}_{S_g}$ implies that there exists $x_\mfP\in F$ with $1-g(x_\mfP)^2a^{-2}\in\mathcal{O}_\mfP$.
Therefore, Proposition \ref{prop:approxfunctions} (in the case $m=r=1$) gives $x\in F$ with $1-g(x)^2a^{-2}\in\mathcal{O}_\mfP$ for every $\mfP\in S_g$.
\end{proof}

\begin{Remark}\label{rem:examples_compact}
Important special cases to which the previous proposition applies are:
\begin{enumerate}
\item $S=\mathcal{S}_\mfp^{=\tau}(F,t,s)$ for $t,s\in F$, which is compact by Lemma \ref{lem:SFsw_clopen} and Lemma \ref{lem:SFcompact},
\item $S=\mathcal{S}_\mfp(F)$, as $\mathcal{S}_\mfp(F)=\mathcal{S}_\mfp^{=(1,1)}(F,t_\mfp,1)$.
\end{enumerate}
\end{Remark}

\begin{Remark}
We want to stress that we proved Proposition \ref{prop:D_implies_UD} only for 
compact sets $S$.
This condition cannot be dropped.
More precisely, if ${\rm UD}_S$ holds for some $S\subseteq\mathcal{S}_\mfp^{=\tau}(F)$,
then $S$ is relatively compact in $\mathcal{S}_\mfp^{=\tau}(F)$ :

Indeed, by applying ${\rm UD}_S$ to the cyclotomic polynomial
$g=\Phi_{q^f-1}$ and $a=t_\mfp$
we find $s\in F$ such that the residue of $s$ is a generator of the multiplicative group of the residue field of $(F,\mfP)$,
for every $\mfP\in S$.
Next, adapting a familiar argument (for example in \cite[Lemma 3.5(ii)]{PR}),
we may select finitely many polynomials $g_{1},\dots,g_{n}\in K(s)[X]$ in order that
for each $\mfP\in S$ there exists $i\in\{1,\dots,n\}$ such that
there is a uniformizer of $(F',\mfP')$ with minimal polynomial $g_i$ over $K'(s)$,
where $(F',\mfP')$ and $(K',\mfp')$ are closures of $(F,\mfP)$ and $(K,\mfp)$, respectively.
Moreover, any element of $F'$ sufficiently close to a root of $g_{i}$ is a uniformizer of $\mfP'$.
Applying ${\rm UD}_{S}$ to each $g_{i}$,
there are elements $t_{i}\in F$ such that 
$S$ is contained in the finite union $\bigcup_{i=1}^{n}\mathcal{S}_\mfp^{=\tau}(F,t_{i},s)$,
which is compact by Remark \ref{rem:examples_compact}.
\end{Remark}

The following example shows that 
 $\mathcal{S}_\mfp^{=\tau}(F)$ itself is in general not compact.

\begin{Example}
Fix $K=\mathbb{Q}$ and a prime number $p$.
We start with a quadratic number field $K_0$ in which $p$ is split. 
Now in each step construct $K_{i+1}$ as a quadratic extension of $K_i$ which satisfies the following three conditions:
\begin{enumerate}
\item one of the two primes above $p$ of type $(1,1)$ is inert,
\item the other prime of type $(1,1)$ is split, and
\item all primes above $p$ of type $(1,2)$ are split.
\end{enumerate}
The existence of such $K_{i+1}$ is a special case of the Grunwald--Wang theorem.
If $F$ is the union of the $K_i$, then the $p$-valuations on $F$ correspond to branches in the tree of primes constructed
and are all in $S_F:=\mathcal{S}_p^{(1,2)}(F)$.
Out of these, exactly one will have type $(1,1)$, and all the others have type $\tau:=(1,2)$.
Now $S_F':=\mathcal{S}_p^{=\tau}(F)$ is not compact,
as the open-closed cover $S_F'=\bigcup_{s\in F}\mathcal{S}_p^{=\tau}(F,p,s)$ has no finite subcover:
If $S_F'=\bigcup_{j=1}^n\mathcal{S}_p^{=\tau}(F,p,s_j)$ then there exists $i$ with $s_j\in K_i$ for all $j$,
and there are $\mfP\in S_F$ of type $\tau$ whose restriction to $K_i$ is of type $(1,1)$,
so $\mfP\notin\mathcal{S}_p^{=\tau}(F,p,s_j)$ for all $j$.
Assuming that $p$ is odd for simplicity, one can in fact check that $F$ satisfies ${\rm D}_{S_F}$ but not ${\rm UD}_{S_F}$
for the polynomial $g=X^2-c$, where $c$ is an integer which is not a square modulo $p$.
\end{Example}

In this situation we have
$\mathcal{S}_{\mfp}^{\tau,\mathbb{Z}}(F)=\mathcal{S}_{\mfp}^{\tau}(F)$,
which additionally shows that 
Proposition \ref{prop:local_condition_clopen}
does not hold for $\mathcal{S}_\mfp^{\tau,\mathbb{Z}}(F)$ instead of $\mathcal{S}_\mfp^{=\tau,\mathbb{Z}}(F)$ (take $\varphi$ to be an $\mathcal{L}_R$-sentence asserting that the residue field has $p^2$ elements).

\begin{proof}[Proof of Proposition \ref{prop:DimpliesUD}]
For every $t,s\in F$ and $\tau'\leq\tau$,
the set $\mathcal{S}_\mfp^{=\tau'}(F,t,s)$
is compact by Remark \ref{rem:examples_compact},
so ${\rm D}_{\mathcal{S}_\mfp^{\tau}(F)}$ implies ${\rm UD}_{\mathcal{S}_\mfp^{=\tau'}(F,t,s)}$  by Proposition \ref{prop:D_implies_UD}.
Conversely, 
$$
 \mathcal{S}_\mfp^\tau(F)=\bigcup_{\tau'\leq\tau}\bigcup_{t,s\in F}\mathcal{S}_\mfp^{=\tau'}(F,t,s),
$$ 
and ${\rm UD}_{\mathcal{S}_\mfp^{=\tau'}(F,t,s)}$ implies ${\rm D}_{\mathcal{S}_\mfp^{=\tau'}(F,t,s)}$ (Lemma \ref{lem:UDimpliesD}).
\end{proof}

\section{Axiomatizing uniform denseness}
\label{sec:axiom}

Let again $K$ be a number field, $\mfp$ a prime of $K$ and $\tau=(e,f)\in\mathbb{N}^{2}$.
As above, if $\mfp$ is a $p$-valuation let $t_\mfp$ be a uniformizer of $v_\mfp$,
and let $q_{\mfp}=|Kv_{\mfp}|$ be the size of the residue field.
When working in a first-order language expanding $\mathcal{L}_{\mathrm{ring}}$ in the following, we freely use multiplicative inversion as though it were represented by a function symbol, as it can be replaced by a either a universal or existential definition for multiplicative inversion. This replacement thus does not affect the quantifier complexity of any formula that is not itself quantifier free.

We will use the following result from \cite{LGP} to axiomatize quantifiers over real and $p$-adic closures.

\begin{Proposition}\label{quantifylocalLemma}
There exists a recursive map 
$\varphi(\mathbf{x})\mapsto\hat{\varphi}_{\mfp}^{\tau}(\mathbf{x})$
from $\mathcal{L}_R$-formulas to $\mathcal{L}_R(K)$-formulas
such that
for every extension $F/K$ and tuple $\mathbf{a}\in F^{m}$
the following holds:
\begin{enumerate}
\item If $(F^\prime,\mathcal{O}_{\mfP'})\models\varphi(\mathbf{a})$ holds for all closures $(F',\mfP')$ of $(F,\mfP)$ 
  with $\mfP\in\mathcal{S}_\mathfrak{p}^{\tau}(F)$,
     then $(F,R_\mfp^{\tau}(F))\models\hat{\varphi}_{\mfp}^{\tau}(\mathbf{a})$.
 \item If $(F,R_\mfp^{\tau}(F))\models\hat{\varphi}_{\mfp}^{\tau}(\mathbf{a})$,
         then $(F',\mathcal{O}_{\mfP'})\models\varphi(\mathbf{a})$ for all
         closures $(F',\mfP')$ of $(F,\mfP)$ with $\mfP\in\mcS_\mfp^{\tau,\mathbb{Z}}(F)$.
\end{enumerate}
\end{Proposition}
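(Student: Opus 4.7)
The plan is to reduce via model completeness to the case of universal (or dually, existential) $\mathcal{L}_R$-formulas and then relativize their quantifiers to formulas about $(F, R_\mfp^\tau(F))$. I would follow the strategy indicated by \cite{LGP}, which ultimately rests on model completeness of the theory of $p$-adically closed fields of $p$-rank at most $\tau$ (in $\mathcal{L}_R$) and of real closed fields (in $\mathcal{L}_R$).

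First, treating the free variables $\mathbf{x}$ as new constants and applying a formula-version of Lemma \ref{prop:universal}, I may assume that $\varphi(\mathbf{x})$ has the shape $\forall \mathbf{y}\, \psi(\mathbf{x}, \mathbf{y})$ with $\psi$ quantifier-free in $\mathcal{L}_R(K,\mathbf{x})$. The candidate $\hat\varphi_\mfp^\tau(\mathbf{x})$ then needs to express: for every $\mfP \in \mathcal{S}_\mfp^\tau(F)$, every closure $(F', \mfP')$ of $(F, \mfP)$, and every $\mathbf{y} \in F'$, one has $(F', \mathcal{O}_{\mfP'}) \models \psi(\mathbf{a}, \mathbf{y})$. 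Since each entry of $\mathbf{y}$ satisfies a monic polynomial over $F$ of degree bounded in terms of $\tau$, this universal quantifier translates to a universal quantifier over coefficient tuples in $F$ together with quantification over such a monic polynomial. Atomic conditions of the form $R(h(\mathbf{y}))$ rewrite as polynomial conditions in these coefficients relative to $\mathcal{O}_\mfP$, and since one wants the statement to hold for all $\mfP \in \mathcal{S}_\mfp^\tau(F)$, the relevant valuation ring is precisely $R_\mfp^\tau(F) = \bigcap_\mfP \mathcal{O}_\mfP$. Direction (1) of the proposition then follows because the translated formula is a necessary consequence of the universal statement holding in every closure; for direction (2), I would invoke that when $\mfP \in \mathcal{S}_\mfp^{\tau,\mathbb{Z}}(F)$ the closure is unique up to $F$-isomorphism, so nothing is lost in the encoding and truth of $\hat\varphi_\mfp^\tau(\mathbf{a})$ back-translates to truth of $\varphi(\mathbf{a})$ in every such closure.

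The main obstacle is uniformity over $\mfP$: $\hat\varphi_\mfp^\tau$ is a single $\mathcal{L}_R(K)$-formula yet must encode statements about infinitely many (generally pairwise incomparable) closures simultaneously. This uniformity is underwritten by the compactness of $\mathcal{S}_\mfp^\tau(F)$ (Lemma \ref{lem:SFcompact}) together with the clopen structure of the conditions coming from quantifier-free $\mathcal{L}_R$-formulas (Proposition \ref{prop:local_condition_clopen}), allowing the relevant infinite conjunction to collapse to a finite one. A secondary subtlety is the asymmetry between (1) and (2): in direction (2) the $\mathbb{Z}$-group hypothesis is indispensable, because without it the closure at $\mfP$ is not uniquely determined, so truth of a formula in $(F, R_\mfp^\tau(F))$ cannot be guaranteed to transfer uniformly to every choice of closure.
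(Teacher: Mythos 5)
Your high-level strategy is the same as the paper's, which simply adapts the proof of \cite[Proposition 8.4]{LGP}: reduce to universal formulas via model completeness of the theories of real closed and $p$-adically closed fields in $\mathcal{L}_R$, then push the remaining quantification down to $F$ and to the holomorphy domain. However, there is a genuine gap in your encoding step. You claim that each entry of $\mathbf{y}\in F'$ satisfies a monic polynomial over $F$ of degree bounded in terms of $\tau$, and you use this to replace the quantifier $\forall\mathbf{y}\in F'$ by a quantifier over coefficient tuples in $F$. This is false: a closure $(F',\mfP')$ is in general an infinite algebraic extension of $F$ containing elements of arbitrarily large degree over $F$ (already the real closure of $\mathbb{Q}$, or the henselization of $(\mathbb{Q},v_p)$, does), and $\tau$ bounds only ramification and residue degree, not the degrees of elements. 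So the proposed translation of the universal quantifier over $F'$ does not go through. The correct route is to use the full strength of the model-completeness step (Lemma \ref{prop:universal}, i.e.\ \cite[Proposition 8.2]{LGP}): it yields a universal $\varphi_\mfp^\tau$ whose truth in $(F,\mathcal{O}_\mfP)$ itself --- not in the closure --- is equivalent to truth of $\varphi$ in every closure of $(F,\mfP)$, for $\mfP\in\mcS_\mfp^{=\tau,\mathbb{Z}}(F)$. After that no quantification over $F'$ remains at all.

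What then remains is to express, uniformly in $\mathbf{b}\in F^r$, the condition ``for all $\mfP\in\mcS_\mfp^\tau(F)$, $(F,\mathcal{O}_\mfP)\models\psi(\mathbf{a},\mathbf{b})$'' with $\psi$ quantifier-free as an $\mathcal{L}_R(K)$-condition on $(F,R_\mfp^\tau(F))$. This is not immediate either, since $R_\mfp^\tau(F)=\bigcap_\mfP\mathcal{O}_\mfP$ only directly captures universally quantified \emph{positive} $R$-atoms, whereas $\psi$ contains negated ones as well; this is precisely the content of \cite[Lemma 8.3]{LGP}, and your appeal to compactness of $\mcS_\mfp^\tau(F)$ and to the clopen-ness of quantifier-free conditions points in the right direction without carrying it out. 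Your explanation of the asymmetry between (1) and (2) via uniqueness of the closure is a reasonable proxy, but the precise reason the $\mathbb{Z}$-group hypothesis enters is that the back-transfer in Lemma \ref{prop:universal} (truth of the universal formula in $(F,\mathcal{O}_\mfP)$ implying truth in the closure) is only available for $\mfP\in\mcS_\mfp^{=\tau,\mathbb{Z}}(F)$. As written, the proposal cannot be repaired without replacing the bounded-degree encoding by this use of Lemma \ref{prop:universal}.
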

 
\begin{proof}
We can copy the proof of \cite[Proposition 8.4]{LGP}, except that
we note that \cite[Lemma 8.3]{LGP} actually only applies \cite[Proposition 8.2]{LGP} and can therefore take an 
$\mathcal{L}_{R}$-formula instead of an $\mathcal{L}_{\rm ring}$-formula,
and that we omit the step of replacing the occurrences of $x\in R$ by the formula $\theta_{R,\mfp}^{\tau'}(x)$,
which makes the application of \cite[Proposition 6.6]{LGP} and therefore the assumption that 
$F$ satisfies $T_{R,\mfp}^{\tau'}$ unnecessary.
\end{proof}

In order to use this result, 
we first deal with the value groups being $\mathbb{Z}$-groups
and then use Proposition \ref{quantifylocalLemma} to axiomatize uniform denseness.

\label{sec:Z}

Recall that an ordered abelian group $G$ is a $\mathbb{Z}$-group
if it is elementarily equivalent to $\mathbb{Z}$;
equivalently, if it is discrete
(i.e.~it has a convex subgroup isomorphic to $\mathbb{Z}$, which we will also denote by $\mathbb{Z}$)
and satisfies 
$G=nG+\{0,\dots,n-1\}$ for all $n\in\mathbb{N}$.
So, given a valuation $v$ on a field $F$ which has a uniformizer, the value group $vF$ is a $\mathbb{Z}$-group if and only if
for each $n\in\mathbb{N}$ and for each $y \in F^\times$ there exists $x\in F^\times$ such that $v(yx^n)\in\{0,\dots,n-1\}\subseteq\mathbb{Z}$,
or, equivalently, there exists $x\in F$ such that $v(yx^n)\in\mathbb{Z}$.

We want to axiomatize the class of fields $F/K$ with $\mathcal{S}_\mfp^\tau(F) = \mathcal{S}_\mfp^{\tau,\mathbb{Z}}(F)$.
In both Lemma \ref{lem:one_value_zero_p_valns}
and Proposition \ref{prop:axioms_Z_groups} we assume that $\mfp$ is a $p$-valuation.

\begin{Lemma}\label{lem:one_value_zero_p_valns}
For any $n\in\mathbb{N}$, there exists a polynomial $\phi_n \in \mathbb{Z}[X_1, \dotsc, X_n]$ such that for any field $F/K$, elements $x_1, \dotsc, x_n \in F$, and $\mfP \in \mathcal{S}_\mfp^\tau(F)$,
we have $v_\mfP(\phi_{n}(x_1, \dotsc, x_n)) = 0$ if and only if $\min(v_\mfP(x_1), \dotsc, v_\mfP(x_n)) = 0$.
\end{Lemma}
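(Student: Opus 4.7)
The plan is to take $\phi_n(X_1,\dotsc,X_n) := N(X_1,\dotsc,X_n) + p$, where $N \in \mathbb{Z}[X_1,\dotsc,X_n]$ is any lift of a homogeneous polynomial $\overline N \in \mathbb{F}_p[X_1,\dotsc,X_n]$ that is anisotropic over $\mathbb{F}_{q_\mfp^f}$. Such an $\overline N$ is produced from a norm form: set $d := [\mathbb{F}_{q_\mfp^f} : \mathbb{F}_p]$, choose $\gamma \in \overline{\mathbb{F}_p}$ such that $\{1,\gamma,\dotsc,\gamma^{n-1}\}$ is an $\mathbb{F}_{q_\mfp^f}$-basis of $\mathbb{F}_{p^{nd}}$, and take $\overline N(X_1,\dotsc,X_n) := N_{\mathbb{F}_{p^{nd}}/\mathbb{F}_p}(X_1 + X_2\gamma + \dotsb + X_n\gamma^{n-1})$. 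This has $\mathbb{F}_p$-coefficients (by Galois invariance), is homogeneous of degree $D := nd$, contains each $X_i^D$ with nonzero coefficient $N_{\mathbb{F}_{p^{nd}}/\mathbb{F}_p}(\gamma^{i-1})$, and is anisotropic over $\mathbb{F}_{q_\mfp^f}$: any vanishing tuple gives an element of $\mathbb{F}_{p^{nd}}$ whose norm is zero and so is itself zero, forcing the tuple to vanish by the basis property. Anisotropy is then inherited over any subfield.

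I then verify $v_\mfP(\phi_n(x)) = 0 \iff \min_i v_\mfP(x_i) = 0$ for $\mfP \in \mathcal{S}_\mfp^\tau(F)$ by cases, using that the residue field $\mathbb{F}_{q_\mfp^{f'}}$ (with $f' \mid f$) embeds in $\mathbb{F}_{q_\mfp^f}$. If all $v_\mfP(x_i) \geq 0$ and some $v_\mfP(x_j) = 0$, then $N(x) \in \mcO_\mfP$ reduces to $\overline N(\bar x) \neq 0$ modulo $\mfP$ by anisotropy, so $v_\mfP(N(x)) = 0$ and, since $v_\mfP(p) \geq 1$, also $v_\mfP(\phi_n(x)) = 0$. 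If every $v_\mfP(x_i) \geq 1$, homogeneity yields $v_\mfP(N(x)) \geq D$, hence $v_\mfP(\phi_n(x)) \geq 1$.

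The hard case is when some $v_\mfP(x_i) < 0$. Set $m := \min_i v_\mfP(x_i) < 0$, $J^* := \{j : v_\mfP(x_j) = m\}$, and fix $i_0 \in J^*$. Decompose $N(x) = N|_{J^*}(x_{J^*}) + R(x)$, where $N|_{J^*}$ sets the variables outside $J^*$ to zero. Every monomial of $R$ involves some $x_i$ with $i \notin J^*$ (hence $v_\mfP(x_i) > m$), so has valuation strictly greater than $Dm$. For the main term, use homogeneity to write $N|_{J^*}(x_{J^*}) = x_{i_0}^D \cdot N|_{J^*}(y_{J^*})$ with $y_j := x_j/x_{i_0}$ a unit; its residue equals $\overline N|_{J^*}(\bar y_{J^*})$, which is nonzero by anisotropy of $\overline N|_{J^*}$ (noting $\bar y_{i_0} = 1$). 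Thus $v_\mfP(N|_{J^*}(x_{J^*})) = Dm$ and $v_\mfP(N(x)) = Dm < 0$, giving $v_\mfP(\phi_n(x)) = Dm \neq 0$.

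I expect the third case to be the main obstacle: a priori, cancellation among the monomials of minimal valuation in $N(x)$ could raise $v_\mfP(N(x))$ to a non-negative value and even to zero. The role of the anisotropy of $\overline N$ (and of its restrictions $\overline N|_{J^*}$) is precisely to prevent such cancellation of the rescaled leading form $\overline N|_{J^*}(\bar y_{J^*})$, pinning $v_\mfP(N(x))$ to $Dm$.
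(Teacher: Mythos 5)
Your proof is correct, but it takes a genuinely different route from the paper's. The paper picks a single monic $g\in\mathbb{Z}[X]$ whose reduction has no root in the degree-$f$ extension of $Kv_\mfp$, homogenizes it to $\phi_2(X,Y)=Y^{\deg g}g(X/Y)$, proves the exact identity $v_\mfP(\phi_2(x,y))=\deg(g)\min(v_\mfP(x),v_\mfP(y))$, and obtains $\phi_n$ for $n>2$ by nesting $\phi_2(X_1,\phi_2(X_2,\dotsc))$. You instead build an $n$-ary form in one stroke, as (a lift of) the norm form of $\mathbb{F}_{p^{nd}}/\mathbb{F}_p$ written in an $\mathbb{F}_{q_\mfp^f}$-basis, and add $p$ to dispose of the all-positive case. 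The two constructions rest on the same principle --- a form whose reduction is anisotropic over every possible residue field $\mathbb{F}_{q_\mfp^{f'}}$, $f'\mid f$, so that the terms of minimal valuation cannot cancel --- and indeed the paper's $\phi_2$ is exactly the binary instance of an anisotropic form. What the paper's nesting buys is that all the valuation-theoretic work is done once, in one variable, via $v_\mfP(g(x))=\deg(g)\min(v_\mfP(x),0)$; what your direct construction buys is a single homogeneous polynomial (no nesting) together with the cleaner identity $v_\mfP(N(x))=D\min_i v_\mfP(x_i)$, which your case-3 factoring argument in fact establishes whenever some $x_i\neq 0$ --- note that this makes the added constant $p$ redundant, though harmless. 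Two small points of hygiene: you should take $N$ to be the \emph{homogeneous} lift of $\overline N$ (lift the coefficients), since ``any lift'' could spoil the homogeneity you use in cases 2 and 3; and in case 2 the hypothesis should be stated as $v_\mfP(x_i)>0$ for all $i$, which for primes in $\mathcal{S}_\mfp^\tau(F)$ (normalized so that $\mathbb{Z}$ is the smallest non-trivial convex subgroup) does imply $v_\mfP(x_i)\geq 1$ as you use.
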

\begin{proof}
Let $g \in \mathbb{Z}[X]$ be a monic polynomial whose reduction has no zero in the extension of $K v_\mfp$ of degree $f$; for instance we may take $g$ to be a lift of an irreducible polynomial over $\mathbb{F}_p$ of sufficiently large degree.
Let $\phi_2(X,Y) = Y^{\deg g} g(X/Y) \in \mathbb{Z}[X,Y]$ be the homogenization.
For any $F/K$, $x,y \in F$ and $\mfP \in \mathcal{S}_\mfp^\tau(F)$, we have $v_\mfP(g(x)) = \deg(g) \min(v_\mfP(x), 0)$, and therefore $v_\mfP(\phi_2(x,y)) = \deg(g) \min(v_\mfP(x), v_\mfP(y))$; in particular $\phi_2$ is as desired for $n=2$.
For higher $n$, one uses the polynomial $\phi_2(X_1, \phi_2(X_2, \phi_2(X_3, \dotsc )\dotsb))$.
For $n=1$ we may take $\phi_1=X_1$.
\end{proof}

\begin{Proposition}\label{prop:axioms_Z_groups}
  There exists an $\mathcal{L}_R(K)$-theory $T_{\mathbb{Z},\mfp}^\tau$ such that for every extension $F/K$,
  we have $(F,R_\mfp^\tau(F)) \models T_{\mathbb{Z},\mfp}^\tau$ if and only if $\mathcal{S}_\mfp^\tau(F) = \mathcal{S}_\mfp^{\tau,\mathbb{Z}}(F)$.
\end{Proposition}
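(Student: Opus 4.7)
My plan is to take, for each $n\geq 1$, the $\mathcal{L}_R(K)$-sentence
$$\chi_n:\quad \forall y\bigl(y \neq 0 \to \exists x\,(x\neq 0 \wedge R(y x^{-n}) \wedge R(t_\mfp^n x^n y^{-1}))\bigr),$$
and set $T_{\mathbb{Z},\mfp}^\tau := \{\chi_n : n\geq 1\}$. The guiding observation is that, for a $p$-valuation $v_\mfP$ on $F$ with $\mathbb{Z}$ as smallest non-trivial convex subgroup of $v_\mfP(F^\times)$, being a $\mathbb{Z}$-group is equivalent to $v_\mfP(F^\times) = nv_\mfP(F^\times) + \mathbb{Z}$ for every $n$; each $\chi_n$ encodes a uniform version of this across $\mathcal{S}_\mfp^\tau(F)$. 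The direction $(F, R_\mfp^\tau(F))\models T_{\mathbb{Z},\mfp}^\tau \Rightarrow \mathcal{S}_\mfp^\tau(F) = \mathcal{S}_\mfp^{\tau,\mathbb{Z}}(F)$ is then essentially immediate: any witness $x$ for $\chi_n$ at $y$ satisfies $v_\mfP(y/x^n)\in [0, v_\mfP(t_\mfp^n)]\subseteq [0, ne]\subseteq\mathbb{Z}$ by convexity of $\mathbb{Z}$, so $v_\mfP(y)\in nv_\mfP(F^\times) + \mathbb{Z}$ for every $\mfP$, and varying $y, n$ yields the $\mathbb{Z}$-group property of each value group.

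For the converse, fix $y\in F^\times$ and $n\geq 1$. For each $\mfP\in\mathcal{S}_\mfp^\tau(F) = \mathcal{S}_\mfp^{\tau,\mathbb{Z}}(F)$ the $\mathbb{Z}$-group property supplies some $x_\mfP\in F^\times$ with $v_\mfP(y/x_\mfP^n)\in\{0,1,\dots,n-1\}$, whence $yx_\mfP^{-n}, t_\mfp^n x_\mfP^n y^{-1}\in\mathcal{O}_\mfP$. By Lemma \ref{lem:local_condition_qf}, the sets $U_x := \{\mfQ : yx^{-n}, t_\mfp^n x^n y^{-1}\in\mathcal{O}_\mfQ\}$ are open-closed in the compact space $\mathcal{S}_\mfp^\tau(F)$ (Lemma \ref{lem:SFcompact}), and the family $\{U_{x_\mfP}\}_\mfP$ covers it. I would extract a finite subcover, refine to a disjoint partition $\mathcal{S}_\mfp^\tau(F) = S_1\sqcup\cdots\sqcup S_k$ with $S_i\subseteq U_{x_i}$, and apply Proposition \ref{prop:value_approximation} with $z_i := x_i$ to obtain $z\in F^\times$ satisfying $v_\mfP(z) = v_\mfP(x_i)$ for each $\mfP\in S_i$. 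Then $v_\mfP(y/z^n) = v_\mfP(y/x_i^n) \in[0, v_\mfP(t_\mfp^n)]$ on all of $\mathcal{S}_\mfp^\tau(F)$, so $yz^{-n}, t_\mfp^n z^n y^{-1}\in R_\mfp^\tau(F)$ and $\chi_n$ is witnessed.

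The main obstacle is verifying the consistency condition of Proposition \ref{prop:value_approximation}: for every valuation $v$ on $F$ coarsening some prime in $S_i$ and some prime in $S_j$, one must have $v(x_i) = v(x_j)$. I plan to use that $\mathbb{Z}$ is the smallest non-trivial convex subgroup of each $v_\mfP(F^\times)$, so that any non-trivial convex subgroup must contain $\mathbb{Z}$. Tracing through, a non-trivial common coarsening $v$ of $v_\mfP$ and $v_\mfQ$ must be trivial on $K$ (otherwise $v(t_\mfp) = v_\mfp(t_\mfp)\neq 0$ contradicts $\mathbb{Z}\subseteq\ker(v_\mfP(F^\times)\twoheadrightarrow vF)$), whence $v$ kills $\mathbb{Z}\subseteq v_\mfP(F^\times)$; the image $v(y/x_i^n)$ of the integer $v_\mfP(y/x_i^n)$ therefore vanishes, giving $nv(x_i) = v(y) = nv(x_j)$, and torsion-freeness of $vF$ forces $v(x_i) = v(x_j)$.
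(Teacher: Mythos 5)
Your proof is correct, but your axioms are genuinely different from (and simpler than) the ones in the paper. The paper's sentence $\nu_{\mfp,n}^\tau$ demands $n$ witnesses $x_0,\dots,x_{n-1}$ such that $\min_i v_\mfP(y^{e!}t_\mfp^i x_i^n)=0$, which forces it to encode ``the minimum of several valuations is zero'' via the auxiliary polynomial $\phi_n$ of Lemma \ref{lem:one_value_zero_p_valns}, and to insert the exponent $e!$ because the shift $i$ needed to land on an integer value depends on the relative ramification $e_\mfP=v_\mfP(t_\mfp)$, which varies over $\mathcal{S}_\mfp^\tau(F)$. Your single-witness sandwich $0\le v_\mfP(yx^{-n})\le v_\mfP(t_\mfp^n)$ sidesteps both devices: since $v_\mfP(t_\mfp^n)=ne_\mfP$ lies in the smallest non-trivial convex subgroup $\mathbb{Z}$ of $v_\mfP(F^\times)$, convexity alone places $v_\mfP(yx^{-n})$ in $\mathbb{Z}$, giving $v_\mfP(y)\in nv_\mfP(F^\times)+\mathbb{Z}=nv_\mfP(F^\times)+\{0,\dots,n-1\}$; and conversely, when the value group is a $\mathbb{Z}$-group, a witness with $v_\mfP(yx_\mfP^{-n})\in\{0,\dots,n-1\}\subseteq[0,ne_\mfP]$ always exists. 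The second half of your argument --- open-closed sets $U_x$, compactness of $\mathcal{S}_\mfp^\tau(F)$, a finite partition refining the cover, and Proposition \ref{prop:value_approximation} --- is the same as the paper's, and your verification of the compatibility hypothesis is the right one: a non-trivial common coarsening of two distinct (hence incomparable) primes of $\mathcal{S}_\mfp^\tau(F)$ is a proper coarsening of each, so it kills the convex subgroup $\mathbb{Z}$ and in particular the integer $v_\mfP(yx_i^{-n})$, whence $nv(x_i)=v(y)=nv(x_j)$ and torsion-freeness of value groups finishes it. The only point worth spelling out is why $v_\mfP(t_\mfp)$ is a positive element of $\mathbb{Z}$ for every $\mfP\in\mathcal{S}_\mfp^\tau(F)$ (it is the relative ramification index $e_\mfP\le e$, exactly as used in the paper's own proof); with that noted, your version would shorten this section by making Lemma \ref{lem:one_value_zero_p_valns} unnecessary.
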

\begin{proof}
Let $\nu_{\mfp,n}^\tau$ be the $\mathcal{L}_{R}(K)$-sentence
  \[ \forall y \neq 0 \exists x_0, \dotsc, x_{n-1} \; R^\times( \phi_{n}(y^{e!} t_\mfp^0 x_0^{n}, \dotsc, y^{e!} t_\mfp^{n -1} x_{n - 1}^{n}) ), \]
where $R^\times(x)$ is short for $R(x)\wedge R(x^{-1})$,
and let $T_{\mathbb{Z},\mfp}^\tau=\{\nu_{\mfp,n}^\tau \colon n > 0\}$.

Consider a field $F/K$ with $(F, R_\mfp^\tau(F)) \models T_{\mathbb{Z},\mfp}^\tau$ and let $\mfP \in \mathcal{S}_\mfp^\tau(F)$.
Given $y \in F^\times$ and $n>0$, and letting $n' = e! n$, there exist $i \in \{ 0, \dotsc, n' - 1\}$ and $x \in F$ with $v_\mfP(y^{e!} t_\mfp^i x^{n'}) = 0$ by using $(F, R_\mfp^\tau(F)) \models \nu_{\mfp,n'}$ and Lemma \ref{lem:one_value_zero_p_valns}.
Therefore $v_\mfP(y^{e!} x^{n'}) = e! v_\mfP(y x^n) \in \mathbb{Z}$ and thus $v_\mfP(y x^n) \in \mathbb{Z}$.
Hence $\mfP \in \mathcal{S}_\mfp^{\tau,\mathbb{Z}}(F)$.
This proves $\mathcal{S}_\mfp^\tau(F) = \mathcal{S}_\mfp^{\tau,\mathbb{Z}}(F)$.

For the converse direction, let $F/K$ be a field with $\mathcal{S}_\mfp^\tau(F) = \mathcal{S}_\mfp^{\tau,\mathbb{Z}}(F)$, and fix $n > 1$ and $y \in F^\times$.
We shall find $x_0, \dotsc, x_{n - 1} \in F$ such that for any $\mfP \in \mathcal{S}_\mfp^{\tau}(F)$ we have $v_\mfP(x_i^n) \geq - e! v_\mfP(y) - i v_\mfP(t_\mfp)$  for any $i \in \{0, \dotsc, n - 1\}$, with equality for some $i$.
We see that then $v_\mfP(y^{e!} t_\mfp^i x_i^{n}) \geq 0$ for all $i$, with equality for some $i$, hence $\nu_{\mfp,n}^\tau$ is satisfied by Lemma \ref{lem:one_value_zero_p_valns}.

It remains to prove that such $x_0, \dotsc, x_{n-1}$ can always be found, for which we use Proposition \ref{prop:value_approximation}.
For every $\mfP$, write $e_\mfP = v_\mfP(t_\mfp)$ for the relative ramification index. 
Let $i_\mfP \in \{ 0, \dotsc, n-1\}$ such that $i_\mfP \equiv -\frac{e!}{e_\mfP} v_\mfP(y) \pmod{n}$,
choose $x_\mfP^{(i_\mfP)}$ with $n v_\mfP(x_\mfP^{(i_\mfP)}) = - e! v_\mfP(y) - i_\mfP e_\mfP$.
For $i\in\{0,\dotsc,n-1\}\setminus\{i_{\mfP}\}$, choose $x_\mfP^{(i)}$ with \[-e! v_\mfP(y) - i e_\mfP \leq n v_\mfP(x_\mfP^{(i)}) \leq -e! v_\mfP(y) - (i-n) e_\mfP.\]
For every $\mfP$, the set \[V_\mfP = \{ \mfQ \colon n v_\mfQ(x_\mfP^{(i_\mfP)}) = - v_\mfQ(y^{e!} t_\mfp^{i_\mfP}) \text{ and } -v_\mfQ(y^{e!} t_\mfp^i) \leq n v_\mfQ(x_\mfP^{(i)}) \leq  -v_\mfQ(y^{e!} t_\mfp^{i-n}) \text{ for }i \neq i_\mfP \} \] is an open-closed neighbourhood of $\mfP$ in $\mathcal{S}_\mfp^\tau(F)$.
Since $\mathcal{S}_\mfp^\tau(F)$ is compact by Lemma \ref{lem:SFcompact}, 
the $V_\mfP$ are compact and
there are finitely many $\mfP_1, \dotsc, \mfP_m$ such that the corresponding $V_{\mfP_j}$ cover $\mathcal{S}_\mfp^\tau(F)$, and we may choose open-closed and hence compact subsets $S_j \subseteq V_{\mfP_j}$ that partition $\mathcal{S}_\mfp^\tau(F)$.

By Proposition \ref{prop:value_approximation}, we can find $x_0, \dotsc, x_{n-1}$ such that $v_\mfQ(x_i) = v_\mfQ(x_{\mfP_j}^{(i)})$ for all $\mfQ \in S_j$, all $i\in\{0,\dots,n-1\}$, and all $j\in\{1,\dots,m\}$;
the hypothesis of the proposition is satisfied since for any valuation $v$ which is a proper coarsening of $v_\mfQ$ for some prime $\mfQ \in S_j$ we have $v(x_{\mfP_j}^{(i)}) = -\frac{e!}{n} v(y)$, and the right-hand side is independent of $j$.
These $x_i$ are as desired by construction.
\end{proof}

We return to the setting of an arbitrary prime $\mfp$ of $K$.

\begin{Lemma}\label{lem:defining_sets_primes}
There exists an $\mathcal{L}_R(K)$-formula
$\chi_\mfp^\tau(t,s)$ such that for
every extension $F/K$,
every $\mfP\in\mathcal{S}_\mfp^\tau(F)$,
and every $t,s\in F$, 
the following are equivalent:
\begin{enumerate}
\item $\mfP\in\mathcal{S}_\mfp^{=\tau}(F,t,s)$
\item $(F,\mathcal{O}_\mfP)\models\chi_\mfp^\tau(t,s)$
\item $(F',\mathcal{O}_{\mfP'})\models\chi_\mfp^\tau(t,s)$ for some closure $(F',\mfP')$ of $(F,\mfP)$
\item $(F',\mathcal{O}_{\mfP'})\models\chi_\mfp^\tau(t,s)$ for every closure $(F',\mfP')$ of $(F,\mfP)$
\end{enumerate}
\end{Lemma}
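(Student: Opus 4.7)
The plan is to read the desired formula directly off the finitary description given by Lemma~\ref{lem:SFsw}. If $\mfp$ is an ordering, then every prime of $F$ lying above $\mfp$ is itself an ordering, so $\mathcal{S}_\mfp^\tau(F) = \mathcal{S}_\mfp^{=\tau}(F) = \mathcal{S}_\mfp^{=\tau}(F,t,s)$ for all $t,s \in F$; in this case I would take $\chi_\mfp^\tau(t,s)$ to be a trivially true $\mathcal{L}_R(K)$-formula such as $0=0$, and (1)--(4) become vacuously equivalent.

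The substantive case is $\mfp$ being a $p$-valuation. Here I propose
\[
\chi_\mfp^\tau(t,s) \;\equiv\; R^\times(t^e t_\mfp^{-1}) \;\wedge\; R^\times(s) \;\wedge\; \bigwedge_{\substack{n \mid q_\mfp^f - 1 \\ n \neq q_\mfp^f - 1}} R^\times(s^n - 1),
\]
where $R^\times(x)$ abbreviates $R(x) \wedge R(x^{-1})$ and the multiplicative inversions are eliminated as explained at the start of Section~\ref{sec:axiom}. This is an $\mathcal{L}_R(K)$-formula in the free variables $t,s$: the exponent $e$, the modulus $q_\mfp^f - 1$, and the constant $t_\mfp$ are all determined by $\mfp$ and $\tau$, and the conjunction is finite. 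The equivalence (1)$\Leftrightarrow$(2) is now a direct translation of Lemma~\ref{lem:SFsw}, with $R$ interpreted as $\mathcal{O}_\mfP$.

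For (2)$\Leftrightarrow$(3) and (2)$\Leftrightarrow$(4), I would argue uniformly. Each atomic subformula of $\chi_\mfp^\tau(t,s)$, after elimination of inversions, asserts (in any field equipped with an interpretation of $R$) that a specific element $a \in F$ lies in $R$ and, where appropriate, that another specific element $a' \in F$ does. For any closure $(F',\mfP')$ of $(F,\mfP)$ one has $\mathcal{O}_\mfP = F \cap \mathcal{O}_{\mfP'}$, so for every $a \in F$ the predicate $R(a)$ has the same truth value in $(F,\mathcal{O}_\mfP)$ as in $(F',\mathcal{O}_{\mfP'})$. Propagating this conjunct by conjunct yields the equivalence of (2), (3), and (4), with (3)$\Leftrightarrow$(4) in particular being independent of the choice of closure. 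The only delicate points are the bookkeeping of the two cases (ordering versus $p$-valuation) and the expansion of multiplicative inversions; otherwise the lemma is essentially a syntactic rendering of Lemma~\ref{lem:SFsw}, and I do not foresee a serious obstacle.
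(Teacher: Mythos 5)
Your proposal is correct and matches the paper's proof essentially verbatim: the same trivial formula in the ordering case, the same formula $R^\times(t^et_\mfp^{-1})\wedge R^\times(s)\wedge\bigwedge_{n\mid q_\mfp^f-1,\,n\neq q_\mfp^f-1}R^\times(s^n-1)$ in the $p$-valuation case, with (1)$\Leftrightarrow$(2) read off from Lemma \ref{lem:SFsw} and (2)$\Leftrightarrow$(3)$\Leftrightarrow$(4) from the fact that $\mathcal{O}_{\mfP'}\cap F=\mathcal{O}_\mfP$ for any closure. The only cosmetic difference is that the paper phrases the transfer step via the formula being both existential and universal, whereas you argue atomic conjunct by atomic conjunct; both are fine.
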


\begin{proof}
If $\mfp$ is an ordering, then
$\mathcal{S}_{\mfp}^{=\tau}(F,t,s)=\mathcal{S}_{\mfp}^{\tau}(F)$, for all $t,s$.
Thus we may take $\chi_{\mfp}^{\tau}(t,s)$ to be $t=t$.
Otherwise, $\mfp$ is a $p$-valuation, and we
let $\chi_\mfp^\tau(t,s)$ be the $\mathcal{L}_R(K)$-formula
\begin{align*}
	R^\times(t^et_\mfp^{-1})\wedge R^\times(s)\wedge \bigwedge_{n | q_\mfp^f-1, n \neq q_\mfp^f-1} R^\times(s^n - 1).
\end{align*}
The equivalence of $(1)$ and $(2)$ follows from Lemma \ref{lem:SFsw}. 
For the equivalence of $(2)$, $(3)$ and $(4)$ note that $\chi_\mfp^\tau$ is equivalent both to an existential and a universal $\mathcal{L}_{R}(K)$-formula, and if $(F',\mfP')$ is a closure of $(F,\mfP)$,
then $\mathcal{O}_{\mfP'}^\times\cap F=\mathcal{O}_\mfP^\times$.
\end{proof}

\begin{Proposition}\label{lem:TStau}
There is a recursive theory $T_{{\rm UD},\mfp}^\tau$ in the language $\mathcal{L}_{R}(K)$ 
such that for every extension $F/K$
the following holds:
\begin{enumerate}
\item
If $F$ satisfies ${\rm UD}_{\mathcal{S}_\mfp^{=\tau}(F,t,s)}$ for every $t,s\in F$, then
$(F,R_\mfp^\tau(F))\models T_{{\rm UD},\mfp}^\tau$.
\item
If $(F,R_\mfp^\tau(F))\models T_{{\rm UD},\mfp}^\tau$, then
$F$ satisfies ${\rm UD}_{\mathcal{S}_\mfp^{=\tau,\mathbb{Z}}(F,t,s)}$ for all $t,s\in F$.
\end{enumerate}
\end{Proposition}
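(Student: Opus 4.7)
The plan is to use Proposition \ref{quantifylocalLemma} to transfer a universal condition stated over closures of $(F,\mfP)$ into a first-order condition on $(F,R_\mfp^\tau(F))$. Uniform denseness at $\mathcal{S}_\mfp^{=\tau}(F,t,s)$ is naturally phrased as ``for every monic $g$ and every $a\in F^\times$, there is $x$ such that a universal-over-closures condition holds,'' which fits exactly the template for which \ref{quantifylocalLemma} is designed, once one uses $\chi_\mfp^\tau$ from Lemma \ref{lem:defining_sets_primes} to pick out the primes of interest.

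Concretely, for each $d\geq 1$, let $\bar{c}=(c_0,\dots,c_{d-1})$ be new variables and set $g(\bar{c},X):=X^d+c_{d-1}X^{d-1}+\cdots+c_0$. Define the $\mathcal{L}_R(K)$-formula
\[ \varphi_d(\bar{c},t,s,a,x)\;\equiv\;\bigl(\chi_\mfp^\tau(t,s)\wedge\exists y\,(g(\bar{c},y)=0)\bigr)\to R\bigl(1-g(\bar{c},x)^2a^{-2}\bigr). \]
Apply Proposition \ref{quantifylocalLemma} to $\varphi_d$ to obtain an $\mathcal{L}_R(K)$-formula $\hat{\varphi}_{d,\mfp}^\tau$, and let $T_{{\rm UD},\mfp}^\tau$ consist of the sentences
\[ \forall\bar{c}\,\forall t\,\forall s\,\forall a\,\bigl(a\neq 0\to\exists x\;\hat{\varphi}_{d,\mfp}^\tau(\bar{c},t,s,a,x)\bigr),\qquad d=1,2,\dots. \]

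For direction (1), assume ${\rm UD}_{\mathcal{S}_\mfp^{=\tau}(F,t,s)}$ for all $t,s\in F$. Given $\bar{c},t,s\in F$ and $a\in F^\times$, let $g$ be the corresponding monic polynomial of degree $d$ and use uniform denseness to produce $x\in F$ valid for all $\mfP\in\mathcal{S}_\mfp^{=\tau}(F,t,s)$ at which $g$ has a root in some closure. I then verify $(F',\mathcal{O}_{\mfP'})\models\varphi_d(\bar{c},t,s,a,x)$ for every closure of every $\mfP\in\mathcal{S}_\mfp^\tau(F)$: if $\chi_\mfp^\tau(t,s)$ holds in the closure, Lemma \ref{lem:defining_sets_primes} gives $\mfP\in\mathcal{S}_\mfp^{=\tau}(F,t,s)$, and if additionally $g$ has a root in $F'$, the uniform denseness witness yields $1-g(x)^2a^{-2}\in\mathcal{O}_\mfP\subseteq\mathcal{O}_{\mfP'}$; otherwise the implication is vacuous. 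Proposition \ref{quantifylocalLemma}(1) then delivers the axiom in $(F,R_\mfp^\tau(F))$. Direction (2) is the reverse: the axiom supplies $x$ with $\hat{\varphi}_{d,\mfp}^\tau$ holding, Proposition \ref{quantifylocalLemma}(2) pushes $\varphi_d$ to all closures of every $\mfP\in\mathcal{S}_\mfp^{\tau,\mathbb{Z}}(F)$, and unwinding via Lemma \ref{lem:defining_sets_primes} yields $1-g(x)^2a^{-2}\in\mathcal{O}_{\mfP'}\cap F=\mathcal{O}_\mfP$ for each $\mfP\in\mathcal{S}_\mfp^{=\tau,\mathbb{Z}}(F,t,s)$ at which $g$ has a root in the (now unique) closure.

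The only minor point to iron out is that Proposition \ref{quantifylocalLemma} is stated for $\mathcal{L}_R$-formulas, whereas $\varphi_d$ contains the $K$-constants appearing in $\chi_\mfp^\tau$ (namely $t_\mfp$ and the divisors of $q_\mfp^f-1$); this is handled either by checking that the proof of \ref{quantifylocalLemma} extends verbatim to $\mathcal{L}_R(K)$-formulas, or by promoting those finitely many constants to additional free variables in $\varphi_d$ and instantiating them at the end when forming the sentences of $T_{{\rm UD},\mfp}^\tau$.
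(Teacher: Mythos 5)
Your proposal is correct and follows essentially the same route as the paper: the same guarded formula $[\chi_\mfp^\tau(t,s)\wedge\exists y\,(g_{\mathbf c}(y)=0)]\to R(1-g_{\mathbf c}(x)^2a^{-2})$, the same application of Proposition \ref{quantifylocalLemma} to relativize it over closures, and the same axiom scheme indexed by the degree, with the verification resting on Lemma \ref{lem:defining_sets_primes} exactly as in the paper. Your closing remark about $\mathcal{L}_R$ versus $\mathcal{L}_R(K)$ is a reasonable point of care (the paper silently applies Proposition \ref{quantifylocalLemma} to a formula with $K$-constants as well), and either of your proposed fixes works.
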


\begin{proof}
For a $n$-tuple $\mathbf{c}$ denote by $g_\mathbf{c}$ the monic polynomial of degree $n$ with coefficients $\mathbf{c}$.
Let $\varphi(\mathbf{c},x,a,t,s)$ be the $\mathcal{L}_R(K)$-formula 
$$
	[\chi_\mfp^\tau(t,s)\wedge(\exists y)(g_\mathbf{c}(y)=0)]\rightarrow R(1-g_\mathbf{c}(x)^2a^{-2}).
$$
Proposition \ref{quantifylocalLemma} gives us an $\mathcal{L}_{R}(K)$-formula $\hat{\varphi}_{\mfp}^\tau(\mathbf{c},x,a,t,s)$.
Let $\psi_n$ be the sentence
$$
 \forall\mathbf{c}\forall a\neq 0\forall t\forall s\exists x\hat{\varphi}_{\mfp}^\tau(\mathbf{c},x,a,t,s)
$$ 
with $\mathbf{c}$ of length $n$.
Then $T_{{\rm UD},\mfp}^\tau=\{\psi_n:n\in\mathbb{N}\}$ is the required $\mathcal{L}_{R}(K)$-theory,
by Lemma \ref{lem:defining_sets_primes}.
\end{proof}

\begin{proof}[Proof of Proposition \ref{prop:axiom}]
Let $T_\mfp^\tau = T_{\mathbb{Z},\mfp}^\tau \cup\bigcup_{\tau'\leq\tau} T_{{\rm UD},\mfp}^{\tau'}$.

If $(F,R_\mfp^\tau(F))\models T_\mfp^\tau$, then 
in particular $(F,R_\mfp^\tau(F))\models  T_{{\rm UD},\mfp}^{\tau'}$ for every $\tau'\leq\tau$,
so $F$ satisfies ${\rm UD}_{\mathcal{S}_\mfp^{=\tau',\mathbb{Z}}(F,t,s)}$ for every $t,s\in F$ 
by Proposition \ref{lem:TStau}.
Since also $(F,R_\mfp^\tau(F))\models T_{\mathbb{Z},\mfp}^\tau$,
we have that $\mathcal{S}_\mfp^{=\tau,\mathbb{Z}}(F,t,s)=\mathcal{S}_\mfp^{=\tau}(F,t,s)$ by
Proposition \ref{prop:axioms_Z_groups}.

Conversely, if ${\rm UD}_{\mathcal{S}_\mfp^{=\tau'}(F,t,s)}$  holds for every $t,s\in F$ and $\tau'\leq\tau$, then in particular ${\rm D}_{\mathcal{S}_\mfp^\tau(F)}$ holds,
hence $\mathcal{S}_\mfp^\tau(F)=\mathcal{S}_\mfp^{\tau,\mathbb{Z}}(F)$ since (in the case of $p$-valuations) 
the value group of a prime in $\mathcal{S}_\mfp^\tau(F)$ is the same as the value group of a corresponding closure by Lemma \ref{lem:denseness_preserves_val_grp_res_fld}, and the latter is a $\mathbb{Z}$-group.
Thus by Proposition \ref{prop:axioms_Z_groups} and Proposition \ref{lem:TStau},
$(F,R_\mfp^\tau(F))\models T_\mfp^\tau$.
\end{proof}

\begin{Remark}
A closer inspection shows that the theory $T_\mfp^\tau$ thus obtained
has quantifier complexity $\forall\exists\forall$.
However, it is not difficult to see that
if $(F_n,R_\mfp^\tau(F_n))_{n\in\mathbb{N}}$
is a chain of models of $T_\mfp^\tau$,
then also $F=\bigcup_{n\in\mathbb{N}}F_n$ 
satisfies
 ${\rm D}_{\mathcal{S}_\mfp^\tau(F)}$
 and so $(F,R_{\mfp}^{\tau}(F))$ is a model of $T_\mfp^\tau$.
 Therefore, by general principles,
$T_\mfp^{\tau}$ is equivalent to an $\forall\exists$-theory.
\end{Remark}

\begin{Remark}
Corollary \ref{cor:Tp} shows that if 
${\rm D}_{\mathcal{S}_\mfp^\tau(F)}$ holds
and $(E,R^{\tau}_{\mfp}(E))\equiv (F,R^{\tau}_{\mfp}(F))$,
as $\mathcal{L}_{R}(K)$-structures,
then also 
${\rm D}_{\mathcal{S}_\mfp^\tau(E)}$
holds.
In Section \ref{sec:counterexample} we show---at least in the case $K=\mathbb{Q}$ and $\mfp=\infty$---that this statement no longer holds true in $\mathcal{L}_{\rm ring}$ instead of $\mathcal{L}_R$,
i.e.~if $F$ is dense in all its real closures and $E\equiv F$,
as $\mathcal{L}_{\mathrm{ring}}$-structures,
it does {\em not} follow that also $E$ is dense in all its real closures.
In other words, the property ${\rm D}_{\mathcal{S}_\mfp^\tau}$ is in general not preserved under elementary equivalence in the language of rings.
\end{Remark}

\begin{Remark}
However, we observe that the property ${\rm D}_{\mathcal{S}_\mfp^\tau}$ does pass down elementary extensions in the language of rings.
Let $F\preceq F^{*}$ be an $\mathcal{L}_{\mathrm{ring}}$-elementary extension
and suppose that ${\rm D}_{\mathcal{S}_{\mfp}^{\tau}(F^{*})}$ holds.
Let $\mfP\in\mathcal{S}_{\mfp}^{\tau}(F)$,
and let $w$ be a non-trivial valuation on $F$ which is a
coarsening of $\mfP$.
There exists an ultrapower $F^{**}$ of $F$
into which we may $\mathcal{L}_{\mathrm{ring}}(F)$-elementarily embed $F^{*}$.
Identifying embeddings with inclusions, we have
$F\preceq F^{*}\preceq F^{**}$.
Since $F^{**}$ is an ultrapower of $F$, we may let $w^{**}$ and $\mfP^{**}$ be the natural extensions of $w$ and $\mfP$ to $F^{**}$, respectively.
By \L os's theorem, we have
$$(F,\mathcal{O}_{w},\mathcal{O}_{\mfP})\preceq(F^{**},\mathcal{O}_{w^{**}},\mathcal{O}_{\mfP^{**}}),$$
where these are viewed as structures in a language $\mathcal{L}_{R,S}(K)$ which is the expansion of $\mathcal{L}_{\mathrm{ring}}(K)$ by two unary predicates $R$ and $S$.
In particular: $\mfP^{**}$ is also a prime of $F^{**}$, lying above $\mfp$, and of the same exact type $\tau'\leq\tau$ as $\mfP$;
and $w^{**}$ is a non-trivial valuation on $F^{**}$ coarsening $\mfP^{**}$.
Next, we let $w^{*}$ and $\mfP^{*}$ denote the restrictions of $w^{**}$ and $\mfP^{**}$ to $F^{*}$,
respectively.
Then
\begin{align}\label{eq.ex.cl}
\tag{$\dagger$}
	(F,\mathcal{O}_{w},\mathcal{O}_{\mfP})\text{ is existentially closed in }(F^{*},\mathcal{O}_{w^{*}},\mathcal{O}_{\mfP^{*}})
\end{align}
as $\mathcal{L}_{R,S}(K)$-structures.
Again 
$\mfP^{*}$ is of the same exact type as $\mfP$,
and $w^{*}$ is a coarsening of $\mfP^{*}$.
In particular $\mfP^{*}\in\mathcal{S}_{\mfp}^{\tau}(F^{*})$.

We break into two cases.
First, suppose that $\mfp$ is a $p$-valuation.
Then $v_{\mfP}$
induces a $p$-valuation $\bar{v}_{\mfP}$
on $Fw$,
and $v_{\mfP^{*}}$
induces a $p$-valuation $\bar{v}_{\mfP^{*}}$
on $F^{*}w^{*}$.
In fact $\bar{v}_{\mfP}$ is the restriction of $\bar{v}_{\mfP^{*}}$ to $Fw$.
By assumption, $F^{*}$ is dense in a closure of $(F^{*},\mfP^{*})$;
and thus $v_{\mfP^{*}}F^{*}$ is a $\mathbb{Z}$-group and
$(F^{*}w^{*},\bar{v}_{\mfP^{*}})$ is henselian,
the latter by $(1)\Rightarrow(2)$ of Lemma \ref{lem:Ershov}.
From (\ref{eq.ex.cl}),
it follows that
$v_{\mfP}F$ is existentially closed in $v_{\mfP^{*}}F^{*}$,
thus $v_{\mfP}F$ is also a $\mathbb{Z}$-group.
Also from 
(\ref{eq.ex.cl}),
it follows that $Fw$ is existentially closed in $F^{*}w^{*}$.
In particular, $Fw$ is relatively algebraically closed in $F^{*}w^{*}$,
and so $\bar{v}_{\mfP}$ is henselian.
By $(2)\Rightarrow(1)$ of Lemma \ref{lem:Ershov}, and since $w$ was an arbitrary non-trivial valuation coarsening $v_{\mfP}$,
$F$ is dense in a henselization of $(F,\mfP)$, which is a $p$-adic closure.
Note that, by ignoring $w$ altogether, this argument shows that
the property
$\mathcal{S}_{\mfp}^{\tau}=\mathcal{S}_{\mfp}^{\tau,\mathbb{Z}}$
passes down elementary extensions in the language of rings.

If $\mfp$ is an ordering,
on the other hand,
then $\mfP$ induces an ordering $\leq_{w}$ on $Fw$,
and $\mfP^{*}$
induces an ordering $\leq_{w^{*}}$
on $F^{*}w^{*}$.
Again, $\leq_{w}$ is the restriction of $\leq_{w^{*}}$ to $Fw$.
Again, by assumption, $F^{*}$ is dense in a closure of $(F^{*},\mfP^{*})$;
and thus $w^{*}F^{*}$ is divisible and $(F^{*}w^{*},\leq_{w^{*}})$ is real closed,
by Proposition \ref{prp:dense.in.real.closure}.
From (\ref{eq.ex.cl}), it follows that
$wF$ is existentially closed in $w^{*}F^{*}$,
and that $(Fw,\leq_{w})$ is existentially closed in $(F^{*}w^{*},\leq_{w^{*}})$.
Thus $wF$ is divisible and $(Fw,\leq_{w})$ is real closed.
By Proposition \ref{prp:dense.in.real.closure}, and since $w$ was an arbitrary non-trivial $\leq_{\mfP}$-convex valuation, $F$ is dense in a real closure of $(F,\mfP)$.
Thus, in either case, ${\rm D}_{\mathcal{S}_{\mfp}^{\tau}(F)}$ holds.
\end{Remark}

\begin{Remark}
It is not hard to see that
denseness of $F$ in the closure
of one specific prime $\mfP$ of $F$ is an elementary property
in $\mathcal{L}_R$ where $R$ interprets $\mathcal{O}_\mfP$:
If $\mfP$ is a valuation then (3) of Lemma \ref{lem:Dense_in_henselization_polynomials} gives an elementary characterization of denseness of $(K,v)$ in its henselization, and one axiomatizes that a henselization is a closure by demanding that the value group be a $\mathbb{Z}$-group.
If $\mfP$ is an ordering then a similar characterization is
given in \cite[Satz 13]{Hau}.
\end{Remark}

\section{Algebraic fields and applications}
\label{sec:algebraic}

Recall that the Pythagoras number of a field $F$ is the smallest $n$ such that
every sum of squares in $F$ is a sum of at most $n$ squares. 
We denote the Pythagoras number of $F$ by $\pi_\infty(F)$. 
Let $\Phi_{\infty,n}$ be the $\mathcal{L}_{\rm ring}$-theory stating that every sum of $n+1$ squares is a sum of $n$ squares,
and let $\varphi_{\infty,n}(x)$ be an $\mathcal{L}_{\rm ring}$-formula defining the sums of $n$ squares.
As the sums of squares of $F$ are precisely the elements of $R_\infty(F)$,
the following is obvious:

\begin{Lemma}\label{lem:phi_infty}
For every field $F$, and every $n$, we have $\varphi_{\infty,n}(F)\subseteq  R_\infty(F)$.
Moreover, the following are equivalent:
\begin{enumerate}
\item $\varphi_{\infty,n}(F)=R_\infty(F)$ 
\item $\pi_\infty(F)\leq n$.
\item $F\models\Phi_{\infty,n}$
\end{enumerate}
\end{Lemma}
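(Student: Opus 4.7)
The plan is to invoke the classical Artin--Schreier theorem identifying $R_\infty(F)$ with the set $\sum F^2$ of sums of squares in $F$. Indeed, $R_\infty(F)=\bigcap_{\mfP\in\mcS_\infty(F)}\mcO_\mfP$ is the intersection of all positive cones of orderings of $F$, and the Artin--Schreier theorem asserts precisely that this intersection equals $\sum F^2$. Granted this identification, the lemma reduces to unwinding definitions.

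For the first assertion $\varphi_{\infty,n}(F)\subseteq R_\infty(F)$: any sum of $n$ squares is in particular a sum of squares, hence lies in every positive cone and therefore in $R_\infty(F)$.

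For the equivalences I would prove the cycle (1)$\Rightarrow$(2)$\Rightarrow$(3)$\Rightarrow$(1). The implication (1)$\Rightarrow$(2) is immediate: if $\varphi_{\infty,n}(F)=R_\infty(F)=\sum F^2$, then every sum of squares is already a sum of $n$ squares, so by definition $\pi_\infty(F)\leq n$. For (2)$\Rightarrow$(3), if $\pi_\infty(F)\leq n$, then any sum of $n+1$ squares is an element of $\sum F^2$ and hence a sum of at most $n$ squares; padding with $0=0^2$ yields exactly $n$ squares, so $F\models\Phi_{\infty,n}$. For (3)$\Rightarrow$(1), I would assume $\Phi_{\infty,n}$ and show by induction on $k\geq 0$ that every sum of $n+k$ squares is a sum of $n$ squares: the base $k=0$ is trivial, and for the inductive step one applies $\Phi_{\infty,n}$ to the first $n+1$ of the $n+k+1$ summands to obtain a sum of $n+k$ squares, to which the inductive hypothesis applies. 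Consequently $R_\infty(F)=\sum F^2\subseteq\varphi_{\infty,n}(F)$, which combined with the first assertion yields (1).

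No step presents any real obstacle; the content of the lemma is essentially a verbatim unpacking of the definition of the Pythagoras number, underwritten by the Artin--Schreier characterization of $R_\infty(F)$ as the set of sums of squares. This is indeed why the paper calls the statement ``obvious''.
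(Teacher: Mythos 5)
Your proposal is correct and matches the paper's intent exactly: the paper dispenses with the proof by noting that the sums of squares of $F$ are precisely the elements of $R_\infty(F)$ (the Artin--Schreier identification you cite) and declaring the equivalences obvious, which is just a compressed version of the routine unwinding you carry out. No discrepancy to report.
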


For a $p$-valuation $\mfp$ on a number field $K$, and $\tau\in\mathbb{N}^{2}$,
\cite{ADF2} defines a $p$-adic analogue $\pi_\mfp^\tau(F)$ of the Pythagoras number,
as follows.
For each $F/K$,
we first define an increasing chain $(R_{\mfp,n}^{\tau}(F))_{n\in\mathbb{N}}$ of
subsets of $F$,
as in \cite[Section 2.2]{ADF2},
such that
\begin{align*}
    R_{\mfp}^{\tau}(F)&=\bigcup_{n\in\mathbb{N}}R_{\mfp,n}^{\tau}(F).
\end{align*}
Then the $(\mfp,\tau)$-Pythagoras number of $F$ is
\begin{align*}
	\pi_{\mfp}^{\tau}(F)&:=\inf\big\{n\in\mathbb{N}\;\big|\;R_{\mfp}^{\tau}(F)=R_{\mfp,n}^{\tau}(F)\big\}\in\mathbb{N}\cup\{\infty\}.
\end{align*}
In the language of \cite[Section 3]{ADF2}, for each $n$, the map
$F\longmapsto R_{\mfp,n}^{\tau}(F)$
is a $1$-dimensional diophantine family,
by \cite[Example 3.8]{ADF2}.
That is, for each $n$, there is an existential $\mathcal{L}_{\mathrm{ring}}(K)$-formula $\varphi_{\mfp,n}^{\tau}(x)$ such that
$\varphi_{\mfp,n}^{\tau}(F)=R_{\mfp,n}^{\tau}(F)$, for all $F/K$.
Let $\Phi_{\mfp,n}^{\tau}$ be the universal-existential $\mathcal{L}_{\mathrm{ring}}(K)$-theory consisting of the sentences
\begin{align*}
	\forall x\;[\phi_{\mfp,m}^{\tau}(x)\rightarrow\phi_{\mfp,n}^{\tau}(x)],
\end{align*}
for all $m\geq n$.
Just as for orderings, the following is now obvious.

\begin{Lemma}\label{lem:phi}
If $F/K$ is an extension, then $\varphi_{\mfp,n}^\tau(F)\subseteq  R_\mfp^\tau(F)$,
and the following are equivalent:
\begin{enumerate}
\item $\varphi_{\mfp,n}^\tau(F)=R_\mfp^\tau(F)$ 
\item $\pi_\mfp^\tau(F)\leq n$.
\item $F\models\Phi_{\mfp,n}^\tau$
\end{enumerate}
\end{Lemma}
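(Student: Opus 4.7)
The plan is simply to unwind the definitions imported from \cite{ADF2}. The containment $\varphi_{\mfp,n}^\tau(F)\subseteq R_\mfp^\tau(F)$ is immediate: by construction $\varphi_{\mfp,n}^\tau(F) = R_{\mfp,n}^\tau(F)$, and $R_{\mfp,n}^\tau(F)$ is a term in the increasing chain whose union is $R_\mfp^\tau(F)$.

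For (1)$\Leftrightarrow$(2), I would invoke the definition of $\pi_\mfp^\tau(F)$ as the infimum of those indices $n$ for which $R_\mfp^\tau(F) = R_{\mfp,n}^\tau(F)$. Given (2), there is some $n'\leq n$ witnessing this infimum, and monotonicity of the chain then forces $R_\mfp^\tau(F)=R_{\mfp,n}^\tau(F)=\varphi_{\mfp,n}^\tau(F)$; conversely (1) directly exhibits $n$ as a witness, so $\pi_\mfp^\tau(F)\leq n$. For (2)$\Leftrightarrow$(3), I would unfold the sentences in $\Phi_{\mfp,n}^\tau$: they assert $\varphi_{\mfp,m}^\tau(F)\subseteq\varphi_{\mfp,n}^\tau(F)$ for all $m\geq n$, which by the increasing chain means $R_{\mfp,m}^\tau(F)=R_{\mfp,n}^\tau(F)$ for all $m\geq n$, and this in turn is equivalent to $R_\mfp^\tau(F)=\bigcup_m R_{\mfp,m}^\tau(F)=R_{\mfp,n}^\tau(F)$, i.e.\ to $\pi_\mfp^\tau(F)\leq n$.

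No substantive difficulty is expected; the authors themselves advertise the lemma as obvious, in complete parallel with Lemma \ref{lem:phi_infty}. The only point requiring a moment of care is the infimum in the definition of $\pi_\mfp^\tau(F)$: one must remember that a witness $n'\leq n$ to the infimum, combined with the fact that $(R_{\mfp,n}^\tau(F))_n$ is an increasing chain, suffices to conclude equality at index $n$ itself.
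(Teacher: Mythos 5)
Your proposal is correct and is precisely the definition-unwinding argument that the paper itself deems ``obvious'' (no proof is given there beyond the remark that it parallels Lemma \ref{lem:phi_infty}). The one point you flag — that $\pi_\mfp^\tau(F)\leq n$ yields a witness $n'\leq n$ and monotonicity of the chain $(R_{\mfp,n}^\tau(F))_n$ then gives equality at $n$ itself — is handled correctly.
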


Let $\mfp$ now be an arbitrary prime of $K$.
For a class of fields $\mathcal{F}$ we let $\pi_\mfp^\tau(\mathcal{F})=\sup_{F\in\mathcal{F}}\pi_\mfp^\tau(F)$.

\begin{Lemma}\label{lem:bounded_pi_closure}
For a class of fields $\mathcal{F}$ let
$\mathcal{F}^*$ denote its closure
under elementary equivalence, direct limits and ultraproducts.
Then $\pi_\mfp^\tau(\mathcal{F}^*)=\pi_\mfp^\tau(\mathcal{F})$.
\end{Lemma}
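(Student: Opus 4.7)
The plan is to exploit the explicit $\forall\exists$-axiomatization of ``$\pi_\mfp^\tau \leq n$'' provided by Lemmas \ref{lem:phi_infty} and \ref{lem:phi}. The inequality $\pi_\mfp^\tau(\mathcal{F}) \leq \pi_\mfp^\tau(\mathcal{F}^*)$ is trivial from $\mathcal{F} \subseteq \mathcal{F}^*$, so only the reverse direction requires work, and we may assume $n := \pi_\mfp^\tau(\mathcal{F}) < \infty$ (else there is nothing to show). By Lemma \ref{lem:phi} (in the $p$-adic case) or Lemma \ref{lem:phi_infty} (in the real case), every $F \in \mathcal{F}$ satisfies the theory $\Phi_{\mfp,n}^\tau$, which by construction is universal-existential. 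It therefore suffices to show that the class $\mathrm{Mod}(\Phi_{\mfp,n}^\tau)$ is closed under the three operations generating $\mathcal{F}^*$, as then $\mathcal{F}^* \subseteq \mathrm{Mod}(\Phi_{\mfp,n}^\tau)$, giving $\pi_\mfp^\tau(\mathcal{F}^*) \leq n$.

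Closure under elementary equivalence is immediate from $\Phi_{\mfp,n}^\tau$ being a first-order theory, and closure under ultraproducts is immediate from {\L}o\'{s}'s theorem. For direct limits of fields, the homomorphisms are automatically injective, so one may realize such a limit as a directed union of subfields. The preservation of $\forall\exists$-sentences under directed unions is standard: given an axiom $\forall \bar x \exists \bar y\, \psi(\bar x, \bar y)$ of $\Phi_{\mfp,n}^\tau$ (with $\psi$ quantifier-free) and a tuple $\bar a$ from the union, by directedness $\bar a$ lies in some member $F_i$ of the system, where witnesses $\bar b$ for $\exists \bar y\, \psi(\bar a, \bar y)$ exist; since quantifier-free formulas are preserved under the inclusion $F_i \hookrightarrow \bigcup_j F_j$, these same witnesses work in the direct limit.

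There is no significant obstacle: the entire argument is a formal consequence of the fact, already recorded in the excerpt, that $\Phi_{\mfp,n}^\tau$ and $\Phi_{\infty,n}$ are $\forall\exists$-theories whose models are exactly the fields of $(\mfp,\tau)$-Pythagoras number at most $n$. The only point worth emphasizing is that the axiomatization is \emph{uniform} in the field (the formulas $\varphi_{\mfp,n}^\tau$ are fixed $\mathcal{L}_{\mathrm{ring}}(K)$-formulas), which is precisely what makes the transfer across $\mathcal{L}_{\mathrm{ring}}(K)$-elementary equivalence, ultraproducts, and directed unions automatic.
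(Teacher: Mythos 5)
Your proof is correct and takes exactly the paper's approach: the paper's entire argument is the one-line observation ``This is clear, since $\Phi_{\mfp,n}^\tau$ is a $\forall\exists$-theory,'' and you have simply spelled out the standard preservation facts (elementary equivalence, {\L}o\'{s}, directed unions) that this invokes.
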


\begin{proof}
This is clear, 
since $\Phi_{\mfp,n}^\tau$ is a $\forall\exists$-theory.
\end{proof}

Let
$T_{\mfp,n}^\tau$ denote the union of $\Phi_{\mfp,n}^\tau$
and $T_\mfp^\tau$, with all occurrences of $R$ replaced with $\varphi_{\mfp,n}^\tau(x)$.

\begin{Proposition}\label{prop:Tpn}
$F\models T_{\mfp,n}^\tau$ if and only if
$\pi_\mfp^\tau(F)\leq n$ and ${\rm D}_{\mathcal{S}_\mfp^\tau(F)}$ holds.
\end{Proposition}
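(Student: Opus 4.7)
The plan is to reduce both directions of the equivalence to Corollary \ref{cor:Tp} (which handles denseness) together with Lemma \ref{lem:phi}/\ref{lem:phi_infty} (which handles the Pythagoras number), via the basic model-theoretic observation that replacing a predicate $R$ in an $\mathcal{L}_R(K)$-sentence by a formula $\psi(x)$ amounts to interpreting $R$ in $F$ as the set $\psi(F)$. Concretely, for any $\mathcal{L}_R(K)$-sentence $\sigma$ and any $\mathcal{L}_{\mathrm{ring}}(K)$-formula $\psi(x)$, if $\sigma^\psi$ denotes the $\mathcal{L}_{\mathrm{ring}}(K)$-sentence obtained by replacing each atomic subformula $R(t)$ by $\psi(t)$, then $F \models \sigma^\psi$ iff $(F,\psi(F)) \models \sigma$. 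With this in hand, the two implications are essentially bookkeeping.

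For the forward direction, assume $F \models T_{\mfp,n}^\tau$. In particular $F \models \Phi_{\mfp,n}^\tau$, so by Lemma \ref{lem:phi} (or Lemma \ref{lem:phi_infty} if $\mfp = \infty$) we get $\pi_\mfp^\tau(F) \leq n$ and hence $\varphi_{\mfp,n}^\tau(F) = R_\mfp^\tau(F)$. The remaining sentences of $T_{\mfp,n}^\tau$ are exactly of the form $\sigma^{\varphi_{\mfp,n}^\tau}$ for $\sigma \in T_\mfp^\tau$, so by the above observation and the equality $\varphi_{\mfp,n}^\tau(F) = R_\mfp^\tau(F)$ we conclude $(F, R_\mfp^\tau(F)) \models T_\mfp^\tau$. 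Corollary \ref{cor:Tp} then yields ${\rm D}_{\mathcal{S}_\mfp^\tau(F)}$.

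For the converse, assume $\pi_\mfp^\tau(F) \leq n$ and ${\rm D}_{\mathcal{S}_\mfp^\tau(F)}$. The first assumption and Lemma \ref{lem:phi}/\ref{lem:phi_infty} give $F \models \Phi_{\mfp,n}^\tau$ together with $\varphi_{\mfp,n}^\tau(F) = R_\mfp^\tau(F)$. The second assumption and Corollary \ref{cor:Tp} give $(F, R_\mfp^\tau(F)) \models T_\mfp^\tau$. Using the substitution equivalence in the other direction, $F \models \sigma^{\varphi_{\mfp,n}^\tau}$ for every $\sigma \in T_\mfp^\tau$, and combining with $\Phi_{\mfp,n}^\tau$ yields $F \models T_{\mfp,n}^\tau$.

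There is no real obstacle: the work has already been done in Proposition \ref{prop:axiom}/Corollary \ref{cor:Tp} (which gives the $\mathcal{L}_R(K)$-axiomatization of denseness) and in Lemma \ref{lem:phi}/\ref{lem:phi_infty} (which shows that $\varphi_{\mfp,n}^\tau$ defines $R_\mfp^\tau(F)$ precisely when $\pi_\mfp^\tau(F) \leq n$). The only point one might stress is that the substitution used in the definition of $T_{\mfp,n}^\tau$ is well-formed because $\varphi_{\mfp,n}^\tau$ has a single free variable $x$, so each atomic $R(t)$ becomes $\varphi_{\mfp,n}^\tau(t)$ in the natural way.
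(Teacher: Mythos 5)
Your proposal is correct and follows exactly the paper's (one-line) proof, which simply cites Corollary \ref{cor:Tp} together with Lemma \ref{lem:phi}; you have merely spelled out the routine substitution argument that the paper leaves implicit. No further comment is needed.
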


\begin{proof}
This follows from Corollary \ref{cor:Tp} together with
Lemma \ref{lem:phi}.
\end{proof}

\begin{Corollary}
If $\pi_\mfp^\tau(F)=n<\infty$
and $F$ is dense in all closures at elements of $\mathcal{S}_\mfp^\tau(F)$,
then every $F'$ which is $\mathcal{L}_{\mathrm{ring}}(K)$-elementarily equivalent to $F$ is dense in all closures at elements of $\mathcal{S}_\mfp^\tau(F')$.
\end{Corollary}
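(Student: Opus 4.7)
The plan is to read off the corollary immediately from Proposition \ref{prop:Tpn} together with the observation that $T_{\mfp,n}^{\tau}$ is by construction an $\mathcal{L}_{\mathrm{ring}}(K)$-theory, hence its class of models is closed under $\mathcal{L}_{\mathrm{ring}}(K)$-elementary equivalence.

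In detail, I would first invoke Proposition \ref{prop:Tpn} in the direction $(\Longrightarrow)$: the hypotheses $\pi_{\mfp}^{\tau}(F) \leq n$ and ${\rm D}_{\mathcal{S}_{\mfp}^{\tau}(F)}$ together give $F \models T_{\mfp,n}^{\tau}$. Since $T_{\mfp,n}^{\tau}$ was obtained from $\Phi_{\mfp,n}^{\tau}$ and $T_{\mfp}^{\tau}$ by substituting the $\mathcal{L}_{\mathrm{ring}}(K)$-formula $\varphi_{\mfp,n}^{\tau}(x)$ for every occurrence of the predicate $R$, each axiom of $T_{\mfp,n}^{\tau}$ is a genuine $\mathcal{L}_{\mathrm{ring}}(K)$-sentence. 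Therefore from $F \equiv F'$ in $\mathcal{L}_{\mathrm{ring}}(K)$ we conclude $F' \models T_{\mfp,n}^{\tau}$.

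Applying Proposition \ref{prop:Tpn} in the direction $(\Longleftarrow)$ to $F'$ yields simultaneously $\pi_{\mfp}^{\tau}(F') \leq n$ and ${\rm D}_{\mathcal{S}_{\mfp}^{\tau}(F')}$. The latter is by definition the statement that $F'$ is dense in every closure at every prime in $\mathcal{S}_{\mfp}^{\tau}(F')$, which is what we wanted.

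No step is really an obstacle: all the work is already done in Proposition \ref{prop:Tpn}, whose content is precisely that ${\rm D}_{\mathcal{S}_{\mfp}^{\tau}}$ becomes an $\mathcal{L}_{\mathrm{ring}}(K)$-elementary property once one controls the Pythagoras number so that the predicate $R$ can be eliminated via the existential formula $\varphi_{\mfp,n}^{\tau}$. The only mild subtlety to flag in the write-up is that the passage from $F$ to $F'$ of the bound $\pi_{\mfp}^{\tau} \leq n$ is automatic because $\Phi_{\mfp,n}^{\tau}$ is an $\mathcal{L}_{\mathrm{ring}}(K)$-theory (in fact $\forall\exists$), and that the replacement of $R$ by $\varphi_{\mfp,n}^{\tau}$ inside the axioms coming from $T_{\mfp}^{\tau}$ is legitimate in $F'$ precisely because $F'$ still satisfies $\pi_{\mfp}^{\tau}(F') \leq n$, so that $\varphi_{\mfp,n}^{\tau}(F') = R_{\mfp}^{\tau}(F')$ by Lemma \ref{lem:phi}.
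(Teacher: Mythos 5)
Your proposal is correct and is exactly the argument the paper intends: the corollary is stated as an immediate consequence of Proposition \ref{prop:Tpn}, using that $T_{\mfp,n}^{\tau}$ is a genuine $\mathcal{L}_{\mathrm{ring}}(K)$-theory and hence preserved under elementary equivalence. Your additional remark about why the substitution of $\varphi_{\mfp,n}^{\tau}$ for $R$ remains faithful in $F'$ is a correct and sensible point to flag, but it is already packaged into the statement of Proposition \ref{prop:Tpn}.
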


We denote by $\mathcal{A}$ the class of algebraic fields of characteristic zero,
i.e.~fields $F\subseteq\mathbb{Q}^{\rm alg}$.
We denote by ${\rm Th}(\mathcal{A})=\bigcap_{F\in\mathcal{A}}{\rm Th}_{\mathcal{L}_{\rm ring}}(F)$ 
the $\mathcal{L}_{\mathrm{ring}}$-theory of $\mathcal{A}$.

\begin{Proposition}
For each prime $p$ of $\mathbb{Q}$ and pair $\tau$ of positive integers,
there exists $n_p^\tau\in\mathbb{N}$ such that $\pi_p^\tau(F)\leq n_p^\tau$
for each field $F\in\mathcal{A}$.
\end{Proposition}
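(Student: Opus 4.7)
The plan is to reduce the assertion to the case of number fields via Lemma \ref{lem:bounded_pi_closure} and then to invoke Siegel's theorem together with its $p$-adic analogue from \cite{ADF2}. Let $\mathcal{N}$ denote the class of number fields contained in $\mathbb{Q}^{\mathrm{alg}}$. Each $F\in\mathcal{A}$ is the union, and hence a direct limit, of the number fields it contains, so $\mathcal{A}$ is contained in the closure $\mathcal{N}^{*}$ of $\mathcal{N}$ under direct limits. Therefore
\[
  \pi_{p}^{\tau}(\mathcal{A})\;\leq\;\pi_{p}^{\tau}(\mathcal{N}^{*})\;=\;\pi_{p}^{\tau}(\mathcal{N}),
\]
where the equality is Lemma \ref{lem:bounded_pi_closure}. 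It thus suffices to exhibit a uniform bound on $\pi_{p}^{\tau}(K)$ as $K$ ranges over number fields.

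For $p=\infty$, this uniform bound is provided by Siegel's classical theorem on sums of squares in number fields, quoted in the introduction: the Pythagoras number of every number field is bounded by an absolute constant (one may take $4$). For a finite prime $p$ of $\mathbb{Q}$ and an arbitrary pair $\tau\in\mathbb{N}^{2}$, the $p$-adic analogue of Siegel's theorem proved in \cite{ADF2} furnishes a constant $c_{p}^{\tau}$, depending only on $p$ and $\tau$, such that $\pi_{p}^{\tau}(K)\leq c_{p}^{\tau}$ for every number field $K$. Setting $n_{p}^{\tau}$ to be the resulting bound finishes the argument.

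The only non-formal ingredient is the appeal to \cite{ADF2} (respectively Siegel's theorem in the archimedean case); everything else is a formal consequence of the fact, encoded in Lemma \ref{lem:bounded_pi_closure}, that $\Phi_{p,n}^{\tau}$ is a $\forall\exists$-theory and hence the condition $\pi_{p}^{\tau}\leq n$ is preserved under direct limits, together with the trivial observation that every algebraic field is an increasing union of number fields. The genuine mathematical obstacle—already dispatched by the cited works—lies in establishing the uniformity of the $(p,\tau)$-Pythagoras number across all number fields; once this is granted, the passage from $\mathcal{N}$ to $\mathcal{A}$ is purely model-theoretic.
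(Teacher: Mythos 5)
Your proposal is correct and follows essentially the same route as the paper: reduce to number fields via Lemma \ref{lem:bounded_pi_closure} (using that every algebraic field is a direct limit of number fields) and then invoke the uniform bound for number fields from \cite[Thm 1.2]{ADF2} (Siegel's theorem in the archimedean case). The paper's proof is exactly this two-line argument.
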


\begin{proof}
For $F$ in the class $\mathcal{K}$ of number fields,
the statement is \cite[Thm 1.2]{ADF2}.
By Lemma \ref{lem:bounded_pi_closure} it then holds for the 
class $\mathcal{K}^\ast$, which contains $\mathcal{A}$.
\end{proof}

\begin{Corollary}
\label{cor:denseness}
Every $F'\models{\rm Th}(\mathcal{A})$ is dense in all its real closures and all its $p$-adic closures {\rm(}of arbitrary $p$-rank{\rm)}.
\end{Corollary}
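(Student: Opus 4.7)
The plan is to combine the classical denseness result for algebraic fields, the uniform bound on Pythagoras numbers just established, and the axiomatization of denseness provided by Proposition \ref{prop:Tpn}. The key insight is that, unlike the general $T_\mfp^\tau$, the theory $T_{\mfp,n}^\tau$ is formulated in $\mathcal{L}_{\rm ring}(\mathbb{Q})$, the existential formula $\varphi_{\mfp,n}^\tau$ having replaced the predicate $R$; hence it can be transferred along $\mathcal{L}_{\rm ring}$-elementary equivalence, and in particular passes from all members of $\mathcal{A}$ to any model of ${\rm Th}(\mathcal{A})$.

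Concretely, I first fix a prime $\mfp$ of $\mathbb{Q}$ and $\tau \in \mathbb{N}^{2}$ and establish that $T_{\mfp,n_\mfp^\tau}^\tau \subseteq {\rm Th}(\mathcal{A})$. For any $F \in \mathcal{A}$, the introduction recalls that $F$ is dense in the henselization of $(F,v)$ for every valuation $v$. Since the value group of any $p$-valuation on an algebraic field $F$ is already a $\mathbb{Z}$-group, such a henselization coincides with a $p$-adic closure; together with the classical analogue for orderings, this shows that $F$ is dense in all its real closures and all its $p$-adic closures, so in particular ${\rm D}_{\mathcal{S}_\mfp^\tau(F)}$ holds. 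By the preceding Proposition $\pi_\mfp^\tau(F) \leq n_\mfp^\tau$, and hence Proposition \ref{prop:Tpn} gives $F \models T_{\mfp,n_\mfp^\tau}^\tau$. Because this holds for every $F \in \mathcal{A}$ and the $\mathbb{Q}$-constants appearing in $T_{\mfp,n_\mfp^\tau}^\tau$ are $\mathcal{L}_{\rm ring}$-definable in characteristic zero, we conclude $T_{\mfp,n_\mfp^\tau}^\tau \subseteq {\rm Th}(\mathcal{A})$.

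Any $F' \models {\rm Th}(\mathcal{A})$ therefore satisfies $T_{\mfp,n_\mfp^\tau}^\tau$ for every $\mfp$ and $\tau$, and invoking Proposition \ref{prop:Tpn} in the reverse direction yields ${\rm D}_{\mathcal{S}_\mfp^\tau(F')}$. Every ordering on $F'$ restricts to the unique ordering of $\mathbb{Q}$ and so lies in $\mathcal{S}_\infty(F')$, while every $p$-valuation on $F'$ restricts to the $p$-adic valuation of $\mathbb{Q}$ with some relative type $\tau' \in \mathbb{N}^{2}$ and so lies in $\mathcal{S}_p^{\tau'}(F')$. Taking the union over all choices of $\mfp$ and $\tau$ thus shows that $F'$ is dense in all its real and $p$-adic closures of arbitrary $p$-rank. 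I expect the main obstacle to be conceptual rather than computational: one must recognize that the bounded Pythagoras number allows the holomorphy domain to be defined by a single existential formula, so that the $\mathcal{L}_R$-axiomatization from Proposition \ref{prop:axiom} becomes an $\mathcal{L}_{\rm ring}$-axiomatization, which is precisely what permits denseness to transfer through ${\rm Th}(\mathcal{A})$.
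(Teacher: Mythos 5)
Your proposal is correct and follows essentially the same route as the paper: show that the $\mathcal{L}_{\rm ring}$-theory $\bigcup_{p,\tau} T_{p,n_p^\tau}^\tau$ is contained in ${\rm Th}(\mathcal{A})$ by verifying via Proposition \ref{prop:Tpn} that every algebraic field satisfies it (using that its primes are archimedean orderings or $p$-valuations with value group $\mathbb{Z}$, plus the uniform Pythagoras bound), then transfer to any $F'\models{\rm Th}(\mathcal{A})$ and apply Proposition \ref{prop:Tpn} in reverse, noting that every prime of $F'$ lies in some $\mathcal{S}_p^\tau(F')$. Your explicit remark that the $\mathbb{Q}$-constants are $\mathcal{L}_{\rm ring}$-definable is a small point the paper leaves implicit, but otherwise the arguments coincide.
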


\begin{proof}
Let $K=\mathbb{Q}$ and let $T$ be 
the union of $T_{p,n_p^\tau}^\tau$
for all primes $p$ of $\mathbb{Q}$
and all $\tau\in\mathbb{N}^2$.
Then every $F\in\mathcal{A}$ is a model of $T$:
For every $p$ and $\tau$, 
every  $\mfP\in\mathcal{S}_p^\tau(F)$ is either an archimedean ordering or
a valuation with value group $\mathbb{Z}$, 
so $F$ is dense in a closure of $(F,\mfP)$.
By Proposition \ref{prop:Tpn}, $F\models T_{p,n_p^\tau}^\tau$.

Therefore $T\subseteq{\rm Th}(\mathcal{A})$ 
and so also every $F'\models{\rm Th}(\mathcal{A})$ is a model
of $T_{p,n_p^\tau}^\tau$ for every $p\in\mathcal{S}(\mathbb{Q})$ and $\tau\in\mathbb{N}^{2}$.
Let $\mfP$ be a prime of $F'$.
If $p$ is the restriction of $\mfP$ to $\mathbb{Q}$
and $\tau$ denotes the relative type of $\mfP$ over $p$,
then $\mfP\in\mathcal{S}_p^\tau(F')$.
As $F'\models T_{p,n_p^\tau}^\tau$,
Proposition \ref{prop:Tpn} gives that
${\rm D}_{\mathcal{S}_\mfp^\tau(F')}$ holds.
In particular, $F'$ is dense in every closure at $\mfP$.
\end{proof}

\begin{Remark}
After this work was finished we realized that
for $p$-adic closures
this corollary can in fact also be deduced
from the literature in a different way:
For each $F\in\mathcal{A}$, the ring $R_p^\tau(F)$ satisfies 
Darni\`{e}re's {\em Block Approximation Property (BAP)}
\cite[II.2 p.~29]{Darniere}
and is dense in all of its henselizations,
i.e.~$F$ is dense in the quotient fields of henselizations of 
the localizations of $R_p^\tau(F)$ at maximal ideals,
which in this case are precisely the $p$-adic closures of $F$.
As this class of rings is elementary by \cite[II.5 Prop.~5]{Darniere},
and $\varphi_{p,n_p^\tau}^\tau$ defines $R_p^\tau(F')$
in every model $F'$ of ${\rm Th}(\mathcal{A})$,
$R_p^\tau(F')$ will be dense in all its henselizations.
However, also in this case the quotient fields of these henselizations are precisely the $p$-adic closures of $F'$:
Indeed, for a henselian $p$-valued field $E$ with value group $\Gamma$, 
the $\ell$-cohomological dimension is $1 + \dim_{\mathbb{F}_\ell} \Gamma/\ell\Gamma$, in particular this is bigger than 2 for some prime number $\ell$ if $\Gamma$ is not a $\mathbb{Z}$-group,
i.e.~if $E$ is not p-adically closed,
and by \cite[p.~129 Theorem]{Chatzidakis}, 
every model $F'$ of ${\rm Th}(\mathcal{A})$
has (virtual) cohomological dimension at most 2,
and therefore so has every algebraic extension $E$ of $F'$.
\end{Remark}

\begin{Remark}
By Proposition \ref{prop:D_implies_UD} and
Remark \ref{rem:examples_compact},
${\rm D}_{\mathcal{S}_{\mfp}^{\tau}(F)}$
implies the uniform version ${\rm UD}_{\mathcal{S}_{\mfp}^{=\tau}(F,t,s)}$,
for each $t,s$.
We note that the proof of this fact generalizes easily to
allow the prime $\mfp$ to vary within a given finite set
$S$ of primes of $K$.
More precisely, if we denote
$\mathcal{S}_{S}^{\tau}(F)=\bigcup_{\mfp\in S}\mathcal{S}_{\mfp}^{\tau}(F)$
and $\mathcal{S}_{S}^{=\tau}(F,t,s)=\bigcup_{\mfp\in S}\mathcal{S}_{\mfp}^{=\tau}(F,t,s)$;
then
${\rm D}_{\mathcal{S}_{S}^{\tau}(F)}$
implies
${\rm UD}_{\mathcal{S}_{S}^{=\tau}(F,t,s)}$,
for each $t,s$.

In particular, every model $F$ of the theory of algebraic fields satisfies
${\rm UD}_{\mathcal{S}_{S}^{=\tau}(F,t,s)}$, for every $t,s$, and every finite set $S$.
However, for models of the common theory of number fields one can say more:
if $F^{*}$ is a model of the common $\mathcal{L}_{\mathrm{ring}}(K)$-theory of number fields extending $K$,
and $S$ is a finite set of primes of $K$,
then $F^*$ satisfies ${\rm UD}_{S'}$ where
$S'=\bigcup_{\tau\in\mathbb{N}^2}\mathcal{S}_S^\tau(F^*)$.
\end{Remark}

\begin{Remark}
  As we already observed in the introduction, our denseness result has long been known for PRC fields and P$p$C fields, which are models of the theory of algebraic fields.
  In a sense the archetypal result of this kind is the Frey--Prestel theorem \cite[Proposition 11.5.3]{FJ} -- every PAC field is dense with respect to any non-trivial valuation in its algebraic closure --, from which one in particular deduces that henselizations of PAC fields are separably closed.

  There are also other fields in the literature which are dense in all their real and $p$-adic closures in a non-trivial way, for instance coming from other local--global principles which are not subsumed by Pop's notion of PCC fields: for example, the regularly closed fields of \cite{HeinemannPrestel} are dense in their real and $p$-adic closures, but they are not always models of the theory of algebraic fields.
\end{Remark}

We now draw a few simple consequences from Corollary \ref{cor:denseness}.

\begin{Corollary}\label{cor:application_1}
Let $F$ be a model of the theory of algebraic fields, 
and let $v$ be a non-trivial valuation on $F$.
If $Fv$ is formally real, respectively formally $p$-adic,
then $Fv$ is real closed, resp.~$p$-adically closed.
\end{Corollary}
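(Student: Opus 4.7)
The plan is to \emph{lift} the ordering (respectively $p$-valuation) witnessing the formal reality (resp.\ formal $p$-adicity) of $Fv$ to a prime $\mfP$ of $F$ for which $v$ is a non-trivial convex subvaluation (resp.\ non-trivial coarsening), apply Corollary \ref{cor:denseness} to $(F,\mfP)$, and then read off the closedness of $Fv$ by passing to the residue field.

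In the formally real case, I fix an ordering $\leq$ on $Fv$ and use the Baer--Krull theorem to lift it to an ordering $\mfP$ on $F$ for which $v$ is $\leq_{\mfP}$-convex and $\leq_v=\leq$. Since $v$ is non-trivial and $F$ is a model of the theory of algebraic fields, Corollary \ref{cor:denseness} gives that $F$ is dense in a real closure of $(F,\mfP)$, and Proposition \ref{prp:dense.in.real.closure} applied with this $\mfP$ and valuation $v$ directly concludes that $(Fv,\leq)=(Fv,\leq_v)$ is real closed.

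In the formally $p$-adic case, I fix a $p$-valuation $\bar v$ on $Fv$ and consider the composite valuation $w$ on $F$ with valuation ring $\{x\in\mathcal{O}_v : \mathrm{res}_v(x)\in\mathcal{O}_{\bar v}\}$, where $\mathrm{res}_v\colon\mathcal{O}_v\to Fv$ is the residue map. A short check confirms that $w$ is a $p$-valuation on $F$ with $v$ as a proper coarsening: its residue field is $(Fv)\bar v$ (finite of $p$-power order), and since any nonzero convex subgroup of $w(F^\times)$ is either contained in or contains the convex subgroup $\bar v(Fv^\times)$, the smallest non-trivial convex subgroup of $w(F^\times)$ coincides with that of $\bar v(Fv^\times)$, namely $\mathbb{Z}$. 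By Corollary \ref{cor:denseness}, $F$ is dense in a $p$-adic closure of $(F,w)$. The implication (1)$\Rightarrow$(2) of Lemma \ref{lem:Dense_in_henselization_polynomials}, applied to the coarsening $v$ of $w$, then gives that the induced valuation $\bar v$ on $Fv$ is henselian; and Lemma \ref{lem:denseness_preserves_val_grp_res_fld} gives that $w(F^\times)$ is a $\mathbb{Z}$-group. An elementary manipulation---using that the quotient of a $\mathbb{Z}$-group by its smallest non-trivial convex subgroup is divisible---shows that any convex subgroup of a $\mathbb{Z}$-group containing this smallest subgroup is itself a $\mathbb{Z}$-group; applied to $\bar v(Fv^\times)\subseteq w(F^\times)$, this yields that $\bar v(Fv^\times)$ is a $\mathbb{Z}$-group. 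Thus $(Fv,\bar v)$ is a henselian $p$-valued field with $\mathbb{Z}$-group value group, i.e., $p$-adically closed.

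The real case is essentially automatic once the Baer--Krull lift is carried out. The main technical work lies in the $p$-adic case, where one must correctly set up the composite valuation $w$, verify that it is a $p$-valuation (in particular the $\mathbb{Z}$ at the bottom of the value group), and carry out the short argument that the relevant intermediate convex subgroup is again a $\mathbb{Z}$-group. All remaining steps are appeals to Corollary \ref{cor:denseness} and the henselianity/denseness lemmas already proved in Section \ref{sec:independence}.
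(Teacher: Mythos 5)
Your proof is correct and follows essentially the same route as the paper: lift the prime of $Fv$ to a prime $\mfP$ of $F$ coarsened by $v$ (Baer--Krull for orderings, composition of places for $p$-valuations), invoke Corollary \ref{cor:denseness}, and then conclude via Proposition \ref{prp:dense.in.real.closure} in the real case and via Lemmas \ref{lem:Ershov} and \ref{lem:denseness_preserves_val_grp_res_fld} in the $p$-adic case. The only difference is that you spell out in more detail the verification that the composite $w$ is a $p$-valuation and that the convex subgroup $\bar v(Fv^\times)$ of the $\mathbb{Z}$-group $w(F^\times)$ is again a $\mathbb{Z}$-group, steps the paper leaves implicit.
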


\begin{proof}
If $Fv$ has a prime $\mfQ$, this is induced from a prime $\mfP$ of $F$ of which $v$ is a coarsening: 
In the case of $p$-valuations, this is simply composition of places,
and in the case of orderings it follows from the Baer--Krull theorem.
Now $F$ is dense in every closure with respect to $\mfP$ by Corollary \ref{cor:denseness}.
If $\mfQ$ is an ordering, Proposition \ref{prp:dense.in.real.closure} implies that $(Fv, \leq_\mfQ)$ is real closed.
If $\mfQ$ is a $p$-valuation, Lemma \ref{lem:Ershov} implies that $(Fv, v_\mfQ)$ is henselian, and Lemma \ref{lem:denseness_preserves_val_grp_res_fld} implies that the value group $v_\mfP F$, and therefore also its convex subgroup $v_\mfQ(Fv)$, is a $\mathbb{Z}$-group. Hence $(Fv, v_\mfQ)$ is $p$-adically closed.
\end{proof}

\begin{Remark}\label{rem:dense_henselization}
Algebraic fields are not just dense in all $p$-adic closures, which are the henselizations with respect to $p$-valuations,
but in fact in {\em all} their henselizations. 
We note that this stronger property does {\em not} transfer to models of the theory of algebraic fields:
In fact, already any proper elementary extension $\mathbb{Q}^*$ of $\mathbb{Q}$ has many valuations
for which this fails, one example being the following:
Like in any
ordered
field,
as in Section \ref{sec:independence},
the convex closure $\mathcal{O}$ of $\mathbb{Z}\subseteq\mathbb{Q}^*$ is the valuation ring of a valuation $v=v_{\leq}$ with residue field $F:=\mathbb{Q}^*v\subseteq\mathbb{R}$.
Since $\mathbb{Q}^*$ is a proper elementary extension of $\mathbb{Q}$, the valuation $v$ is non-trivial, and hence $F$ is real closed by Corollary \ref{cor:application_1}.
Let $\bar{w}$ be any non-trivial valuation on $F$ and note that $\bar{w}$ is necessarily non-henselian.
Then $\mathbb{Q}^*$ is not dense in a henselization of the composition $w=\bar{w}\circ v$, as $w$
induces the non-henselian valuation $\bar{w}$ on the residue field of its non-trivial coarsening $v$,
see Lemma \ref{lem:Ershov}.
\end{Remark}

\begin{Corollary}
Let $F$ be an algebraic field that admits a prime $\mfP$ such that $(F,\mfP)$ is neither real closed nor $p$-adically closed.
If $F\prec F^*$ is an elementary extension and $v$ a non-trivial valuation on $F^*$, 
then there is no embedding of $F^*v$ into $F^*$.
\end{Corollary}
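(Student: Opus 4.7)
The plan is to assume for contradiction that an embedding $\iota\colon F^{*}v\hookrightarrow F^{*}$ exists, and then deduce that $(F,\mfP)$ must already be closed, contradicting the hypothesis. The argument would proceed in three steps.

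First I would show that $F^{*}v$ is real closed, respectively $p$-adically closed, according to whether $\mfP$ is an ordering or a $p$-valuation. Since $F\prec F^{*}$ and admitting a prime of a given type is a first-order property, $F^{*}$ carries such a prime; as $\iota(F^{*}v)$ is a subfield of $F^{*}$, it inherits formal reality (resp.\ formal $p$-adicity), and so does $F^{*}v$ itself. Because $F\in\mcA$ and $F\prec F^{*}$, $F^{*}$ is a model of $\mathrm{Th}(\mcA)$, so Corollary \ref{cor:application_1} applied to the non-trivial valuation $v$ on $F^{*}$ yields the desired closure of $F^{*}v$.

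Next I would descend to $F$. Let $K_{0}:=F^{*}v\cap\overline{\mathbb{Q}}$ be the algebraic part of $F^{*}v$. Standard arguments show that $K_{0}$ is again real closed, respectively $p$-adically closed: in the real case, odd-degree polynomials and square roots of positive elements of $K_{0}$ have roots in $F^{*}v$ which are algebraic over $\mathbb{Q}$ and hence in $K_{0}$; in the $p$-adic case, $K_{0}$ inherits henselianity from $F^{*}v$ (any Hensel-lift satisfies the relevant polynomial and is thus algebraic), has finite residue field as a subfield of the finite residue of $F^{*}v$, and has value group a subgroup of a $\mathbb{Z}$-group still containing $v(p)$, hence again a $\mathbb{Z}$-group. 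Now $\iota(K_{0})\subseteq F^{*}$ consists entirely of elements algebraic over $\mathbb{Q}$, and the algebraic closure of $\mathbb{Q}$ in $F^{*}$ coincides with $F$ itself: by elementarity, each irreducible $p\in\mathbb{Q}[X]$ has the same finite number of roots in $F$ and in $F^{*}$, so every algebraic element of $F^{*}$ is already in $F$. Hence $R:=\iota(K_{0})\subseteq F$, and $F/R$ is algebraic as both lie in $\overline{\mathbb{Q}}$.

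Finally I would derive the contradiction. If $F=R$, then $F$ is itself closed of the relevant type; since a closed field has a unique prime of that type, $(F,\mfP)$ is closed, contradicting the hypothesis. Otherwise pick $\alpha\in F\setminus R$. In the ordering case, $\alpha$ is a root of an irreducible quadratic over the real closed field $R$, so completing the square expresses $-1$ as a square in $F$, contradicting that $F$ is formally real. In the $p$-adic case, $R$ is henselian, so $\mfP$ is the unique extension to $F$ of the $p$-valuation on $R$; if $F/R$ is infinite algebraic then this extension has either infinite residue field or a value group that fails to be a $\mathbb{Z}$-group, contradicting that $\mfP$ is a $p$-valuation, while if $F/R$ is finite then $F$ is $p$-adically closed (finite algebraic extensions of $p$-adically closed fields being $p$-adically closed), again contradicting the hypothesis. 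I expect the $p$-adic part of this last step to be the main obstacle, since it requires carefully verifying that $K_{0}$ inherits the correct structural invariants and ruling out that a proper infinite algebraic extension of a $p$-adically closed subfield of $F$ could still be formally $p$-adic with respect to $\mfP$; the ordering case is much more rigid and immediate.
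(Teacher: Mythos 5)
Your proposal is correct and follows essentially the same route as the paper: restrict a prime of $F^*$ (of the same type as $\mfP$, obtained by elementarity) along the embedding to make $F^*v$ formally real resp.\ formally $p$-adic, apply Corollary \ref{cor:application_1} to conclude $F^*v$ is real resp.\ $p$-adically closed, and then descend to $F=F^*\cap\mathbb{Q}^{\mathrm{alg}}$ to contradict the hypothesis. You merely spell out in more detail the paper's final one-line claim that containing a real or $p$-adically closed field forces the algebraic part $F$ to be closed of the corresponding type.
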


\begin{proof}
Suppose that $F^*v\subseteq F^*$.
There is a prime $\mfP^{*}$ of $F^{*}$ of the same type as $\mfP$, i.e.\ either both $\mfP,\mfP^{*}$ are orderings, or both $\mfP,\mfP^{*}$ are $p$-valuations of the same exact type $\tau$
(see \cite[Theorem 6.4]{PR} for the case of $p$-valuations).
Restricting $\mfP^*$ to $F^*v$ gives a prime $\mfp$ of $F^*v$,
so $F^*v$ is real or $p$-adically closed by Corollary \ref{cor:application_1}.
In particular, $F^*$ contains a real or $p$-adically closed field,
which implies that also $F=F^*\cap\mathbb{Q}^{\rm alg}$ is real or $p$-adically closed,
contradicting the assumption.
\end{proof}

In the language of \cite{AnscombeFehm}, this means that an algebraic field $F$ as in the corollary does not have {\em embedded residue}
when viewed as a $\mathbb{Z}$-field.
By \cite[Corollary 5.3]{AnscombeFehm}, this immediately implies the following,
where we write $\mathbb{Q}_{p,{\rm alg}}=\mathbb{Q}_p\cap\mathbb{Q}^{\rm alg}$, $\mathbb{R}_{\rm alg}=\mathbb{R}\cap\mathbb{Q}^{\rm alg}$:

\begin{Corollary}
Let $F$ be a field with $F\subsetneqq\mathbb{R}_{{\rm alg}}$
or $[F\mathbb{Q}_{p,{\rm alg}}:\mathbb{Q}_{p,{\rm alg}}]<\infty$ and $\mathbb{Q}_{p,{\rm alg}}\not\subseteq F$ for some $p$.
Then there exists an existential formula $\phi(x)$ in the language of rings which defines in $F((t))$ the valuation ring $F[[t]]$.
\end{Corollary}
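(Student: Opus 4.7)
The plan is to deduce this from the preceding corollary on algebraic fields lacking embedded residue and then invoke \cite[Corollary 5.3]{AnscombeFehm}, as already foreshadowed by the paragraph immediately preceding the statement. The whole argument reduces to verifying that, in each of the two cases, the hypothesis of the preceding corollary applies to $F$.

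First I would check that $F$ is algebraic over $\mathbb{Q}$. In Case~1 this is immediate from $F \subseteq \mathbb{R}_{\mathrm{alg}}$. In Case~2, $F\mathbb{Q}_{p,\mathrm{alg}}$ is a finite extension of $\mathbb{Q}_{p,\mathrm{alg}}$, hence algebraic over $\mathbb{Q}_{p,\mathrm{alg}}$ and therefore over $\mathbb{Q}$, and so is its subfield $F$.

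Next I would exhibit a prime $\mfP$ of $F$ such that $(F,\mfP)$ is neither real closed nor $p$-adically closed. In Case~1, the inclusion $F\subseteq\mathbb{R}_{\mathrm{alg}}$ induces an ordering $\mfP$ on $F$. Any real closed subfield of $\mathbb{R}_{\mathrm{alg}}$ containing $\mathbb{Q}$ must contain the real closure of $\mathbb{Q}$ inside $\mathbb{R}$, which is all of $\mathbb{R}_{\mathrm{alg}}$; since $F\subsetneqq\mathbb{R}_{\mathrm{alg}}$, $F$ is not real closed at $\mfP$, and $\mfP$ being an ordering makes $(F,\mfP)$ trivially not $p$-adically closed. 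In Case~2, the field $F\mathbb{Q}_{p,\mathrm{alg}}$ is henselian (a finite extension of the henselian field $\mathbb{Q}_{p,\mathrm{alg}}$), with a unique $p$-valuation $\mfQ$; set $\mfP=\mfQ|_F$. Were $F$ henselian at $\mfP$, it would contain the henselization of $(\mathbb{Q},v_p)$ inside $F\mathbb{Q}_{p,\mathrm{alg}}$, which by the uniqueness of henselizations coincides with $\mathbb{Q}_{p,\mathrm{alg}}$; this contradicts $\mathbb{Q}_{p,\mathrm{alg}}\not\subseteq F$. Hence $F$ is not henselian, and in particular not $p$-adically closed, at $\mfP$, while $\mfP$ being a $p$-valuation means $(F,\mfP)$ is not real closed.

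With the hypothesis of the preceding corollary verified, the remark following it yields that $F$ does not have embedded residue when viewed as a $\mathbb{Z}$-field, and \cite[Corollary 5.3]{AnscombeFehm} then provides the required existential formula $\phi(x)$ defining $F[[t]]$ in $F((t))$. The only nontrivial step is the henselization argument in Case~2; everything else is just bookkeeping on top of the earlier machinery.
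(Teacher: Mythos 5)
Your proposal is correct and follows essentially the same route as the paper, which deduces the statement directly from the preceding corollary (no embedded residue) together with \cite[Corollary 5.3]{AnscombeFehm}. The verifications you supply — that $F$ is algebraic over $\mathbb{Q}$ in both cases, that a proper subfield of $\mathbb{R}_{\rm alg}$ cannot be real closed by uniqueness of the real closure inside $\mathbb{R}_{\rm alg}$, and that henselianity of $F$ at the restricted $p$-valuation would force $\mathbb{Q}_{p,{\rm alg}}\subseteq F$ by uniqueness of the henselization of $(\mathbb{Q},v_p)$ inside the henselian field $F\mathbb{Q}_{p,{\rm alg}}$ — are precisely the details the paper leaves implicit, and they are sound.
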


We note that the assumption 
 $F\subsetneqq\mathbb{R}_{{\rm alg}}$
or $[F\mathbb{Q}_{p,{\rm alg}}:\mathbb{Q}_{p,{\rm alg}}]<\infty$ and $\mathbb{Q}_{p,{\rm alg}}\not\subseteq F$
cannot be dropped: For example,
the statement does not hold for $F=\mathbb{R}_{\rm alg}$
or $F=\mathbb{Q}_{p,{\rm alg}}$, see \cite[Corollary 6.6]{AnscombeFehm}.

\section{Denseness in real closures in the language of rings}
\label{sec:counterexample}

Let $F$ be a field of characteristic zero.
By Proposition \ref{prop:Tpn}, if $\pi_\infty(F)<\infty$, then denseness of $F$ in its real closures
transfers to every $F'$ which is elementarily equivalent to $F$ in the language of rings.
In this section we adapt a construction of Prestel from \cite{Prestel1978}
to show that this can fail in the case $\pi_\infty(F)=\infty$.
In fact we will see that denseness in real closures transfers in general not to all ultrapowers $F^*$ of $F$.

\begin{Proposition}
There exists a field $K_\infty\subseteq\mathbb{R}$ 
which has a unique ordering,
and for each $i\in\mathbb{N}$ has a valuation $u_{i}$ with residue field $K_{\infty}u_{i}$ which is
hilbertian of finite level\footnote{Recall that the {\em level} $s(F)$
of a field $F$ is
the least number of squares that sum to $-1$.} $s(K_\infty u_i)\geq 4^i$.
\end{Proposition}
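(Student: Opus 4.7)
I would adapt Prestel's construction from \cite{Prestel1978} by realizing $K_\infty$ as a pythagorean-type closure inside $\mathbb{R}$ of a purely transcendental base field of countably infinite transcendence degree over $\mathbb{Q}$.

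To set up the base, fix algebraically independent real numbers $\{t_{i,j} : i \in \mathbb{N},\, 1 \leq j \leq 4^i\} \cup \{z_i : i \in \mathbb{N}\}$ and let $F_0 := \mathbb{Q}(\{t_{i,j}\} \cup \{z_i\}) \subseteq \mathbb{R}$; as a purely transcendental extension of $\mathbb{Q}$, $F_0$ is hilbertian. For each $i$, the polynomial $f_i(Z) := Z^2 + \sum_{j=1}^{4^i} t_{i,j}^2$ is irreducible over $F_0^{(i)} := \mathbb{Q}(\{t_{k,j}\}_{k,j} \cup \{z_k\}_{k \neq i})$, since $-\sum_j t_{i,j}^2$ is negative in the archimedean ordering and hence not a square. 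The corresponding $f_i(z_i)$-adic valuation $u_i^{(0)}$ on $F_0$ has residue field $L_i \cong F_0^{(i)}[\alpha_i]$ with $\alpha_i^2 = -\sum_{j=1}^{4^i} t_{i,j}^2$. The identity $-1 = \sum_{j=1}^{4^i}(t_{i,j}/\alpha_i)^2$ gives $s(L_i) \leq 4^i$, and Pfister's classical computation of the level of $\mathbb{Q}(X_1,\dots,X_{2^{2i}})(\sqrt{-\sum X_j^2})$, together with the fact that levels are powers of $2$, yields $s(L_i) = 4^i$. As a finitely generated algebraic extension of a purely transcendental extension of $\mathbb{Q}$, $L_i$ is hilbertian.

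Next, I would take $K_\infty$ to be the pythagorean closure of $F_0$ inside $\mathbb{R}$. Then every positive element of $K_\infty$ is a square, so the restriction of the ordering of $\mathbb{R}$ is the unique ordering on $K_\infty$. Each $u_i^{(0)}$ extends to a valuation $u_i$ on $K_\infty$, whose residue field $K_\infty u_i$ is an algebraic extension of $L_i$.

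The main obstacle is to verify that $K_\infty u_i$ is still hilbertian of level $\geq 4^i$. The level claim is the easier half: since $L_i \subseteq K_\infty u_i$ is a (possibly infinite) tower of quadratic extensions, any hypothetical representation of $-1$ as a sum of fewer than $4^i$ squares in $K_\infty u_i$ would live in a finite subextension, and standard results on how the level changes under quadratic extensions let one push such a representation down to $L_i$, contradicting $s(L_i) = 4^i$. Preservation of hilbertianity across the potentially infinite pro-$2$ extension $K_\infty u_i / L_i$ is the delicate point, and is the heart of the adaptation of Prestel's argument: should the full pythagorean closure destroy hilbertianity of the residue field, the construction has to be refined so as to adjoin only a sub-tower of square roots large enough to force uniqueness of the ordering on $K_\infty$ yet small enough to keep the residue-field extension within the scope of Weissauer-type hilbertianity-preservation theorems.
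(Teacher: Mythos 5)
Your construction diverges from the paper's at exactly the point you flag as ``delicate,'' and that point is a genuine gap rather than a routine verification. Passing from $F_0$ to its pythagorean closure $K_\infty$ adjoins square roots of infinitely many totally positive elements, and the residues of these elements under $u_i$ are essentially arbitrary elements of $L_i$ (there is no compatibility between the archimedean ordering and the $f_i(z_i)$-adic valuation, and since $L_i$ is nonreal of finite level, \emph{every} element of $L_i$ is a sum of squares). So $K_\infty u_i$ is an uncontrolled infinite $2$-tower over $L_i$. Your claim that ``standard results on how the level changes under quadratic extensions'' push a representation of $-1$ down to $L_i$ is not correct: the level is not preserved under quadratic extensions (it can halve at each step, and drops to $1$ over the quadratic closure), and the classical bounds of the form $s(L)\leq c\cdot s(L(\sqrt{d}))$ lose a constant factor per step, which is fatal over an infinite tower. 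Likewise, hilbertianity of an infinite (non-Galois, non-abelian) $2$-extension of a hilbertian field is not guaranteed by Weissauer-type theorems, as you acknowledge. So neither conclusion of the Proposition is established for $K_\infty u_i$.

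The paper resolves precisely this tension differently, and it is worth seeing how. It builds $K_\infty$ as a union of fields $K_i := H_i\cap R_i$, where $H_i$ is a \emph{henselization} of a function field $L_i$ with respect to the relevant valuation and $R_i$ is a real closure with respect to an archimedean ordering. Since the henselization is an immediate extension, the residue field of the valuation on $K_i$ is \emph{exactly} the hilbertian field $k_i=F_i(\sqrt{-e_i})$ of level between $4^i$ and $3\cdot4^i$ --- no residue extension occurs at all, so nothing about the level or hilbertianity needs to be preserved under an infinite tower. Unique orderability of $K_i$ is then supplied not by pythagorean closure but by Prestel's lemma on intersections of a henselization with a real closure. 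If you want to salvage your approach, you would have to replace the full pythagorean closure by a carefully chosen subfield whose elements are units with controlled residues --- which is, in effect, what the intersection with the henselization achieves. A further, smaller, point: the paper also needs the valuations $u_i$ on the union $K_\infty$ to still have residue field $k_i$, which it arranges by composing places $H_j\dashrightarrow k_j\dashrightarrow H_{j-1}$ trivial on $K_{j-1}$; in your setup you would need an analogous argument that the residue field does not grow when passing from $F_0$ to $K_\infty$, and this is exactly what fails.
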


\begin{proof}
We construct a chain of countable fields
$K_0 \subseteq K_1 \subseteq K_2 \subseteq \dots$
and for each $i$ extensions $H_i/K_i$ and $k_{i+1}/K_i$
and elements $d_i\in K_i$ such that
\begin{enumerate}
\item $d_i$ is a sum of $2\cdot 4^i+1$ squares in $K_i$
\item $K_i$ embeds into $\mathbb{R}$ and has a unique ordering
\item $H_i$ carries a henselian valuation $v_i$ trivial on $K_{i-1}$ with residue field $H_iv_i=K_iv_i=k_i$
\item $k_i$ is hilbertian with $4^i\leq s(k_i)\leq 3\cdot 4^i$
\item $k_i$ carries a valuation $w_i$ trivial on $K_{i-1}$ with residue field contained in $H_{i-1}$ 
\end{enumerate}
We begin with $K_{-1} = \mathbb{R}_{\rm alg}$, $d_{-1} = 1$.
Given $K_{i-1}$, $H_{i-1}$, $k_{i-1}$ and $d_{i-1}$ satisfying (1)-(5), let $F_i = K_{i-1}(x_{i,1}, \dotsc, x_{i,4^i})$ for some new indeterminates $x_{i,j}$, $L_i = F_i(y_i)$ for a new indeterminate $y_i$, $e_i = d_{i-1} + \sum_{j=1}^{4^i} x_{i,j}^2 \in F_i$ and $d_i = e_i + y_i^2 \in L_i$.
Note that
$d_i$ is a sum of $2\cdot 4^{i-1}+1+4^i+1\leq 2\cdot 4^i+1$ squares.
The field $L_i$ carries a valuation $v_i$, coming from the prime ideal $(y_i^2 + e_i)$ of $F_i[y_i]$ (prime since $e_i$ is totally positive), with residue field $k_i := F_i(\sqrt{-e_i})$; this $k_i$ is hilbertian since it is a finitely generated transcendental extension of $K_{i-1}$,
see~\cite[Chapter 13]{FJ}.
Now $e_i$ cannot be written as a sum of fewer than $4^i$ squares in $F_i$ by a theorem of Cassels and Pfister
\cite[Corollary IX.2.3]{Lam},
which implies that $4^i\leq s(k_i)<\infty$, cf~\cite[p.~421 Exercise 5]{Lam}.
As $e_i$ is a sum of fewer than $3\cdot 4^i$ squares in $F_i$,
$s(k_i)\leq 3\cdot 4^i$.

We construct $H_i$ as a henselization of $L_i$ with respect to $v_i$, and $K_i$ as the intersection of $H_i$ with a real closure $R_i$ of $L_i$ with respect to an archimedean ordering.
(Evidently $L_i$ has an archimedean ordering since it can be embedded into $\mathbb{R}$, since $K_{i-1}$ can by assumption.)
By \cite[Lemma 1.3]{Prestel1978}, $K_i$ is then uniquely ordered.
As $L_i\subseteq K_i\subseteq H_i$ 
we get that $H_iv_i=K_iv_i=L_iv_i=k_i$.

We now construct a valuation on $k_i$ (for $i>1$), trivial on $K_{i-1}$, with residue field contained in $H_{i-1}$.
By Hensel's Lemma, $s(k_{i-1})\leq 3\cdot 4^{i-1}$
implies that also $s(H_{i-1})\leq 3\cdot 4^{i-1}$.
Therefore we can find $z_{i,1}, \dotsc, z_{i,4^i} \in H_{i-1}$ such that $y_{i-1}^2 + z_{i,1}^2 + \dotsb + z_{i,4^i}^2 = 0$.
On $F_i = K_{i-1}(x_{i,1}, \dotsc, x_{i,4^i})$, take a $H_{i-1}$-valued place\footnote{We use the same symbol for a valuation and the associated place.} $w$, trivial on $K_{i-1}$, under which each $x_{i,j}$ has residue $z_{i,j}$. 
Then the residue of $e_i = e_{i-1} + y_{i-1}^2 + x_{i,1}^2 + \dotsb + x_{i,4^i}^2$ is $e_{i-1}$, whose negative has a square root in $H_{i-1}$ by Hensel's Lemma; therefore we can extend $w$ to an $H_{i-1}$-valued place $w_i$ on $k_i$.

Let $K_\infty = \bigcup_{i\in\mathbb{N}} K_i$. This field is naturally embedded into $\mathbb{R}$ by using the embeddings of all the $K_i$, and it is uniquely ordered since all the $K_i$ are, by (2).
It remains to show that
for each $i_0$, there is a valuation on $K_\infty$ with residue field $k_{i_0}$.
For each $i > i_0$, we have a place $w_i\circ v_i:H_i \dasharrow k_i \dasharrow H_{i-1}$ trivial on $K_{i-1}$, by (3) and (5); 
by composition, we obtain a place $H_i \dasharrow k_{i_0}$ and therefore a place $K_i \dasharrow k_{i_0}$, with residue field exactly $k_{i_0}$.
The corresponding valuations on the $K_i$ are all compatible, since $w_i\circ v_i$ is trivial on $K_{i-1}$; therefore we can take an inductive limit of valuation rings to obtain a valuation $u_{i_0}$ on $K_\infty$ with residue field $k_{i_0}$.
\end{proof}

\begin{Lemma}\label{lem:Dsumsofsquares}
${\rm D}_{\mathcal{S}_\infty(F)}$ implies that
for every $g \in F[X]$ of odd degree and every $\varepsilon \in F^\times$, there exists $x \in F$ with $\varepsilon^2-g(x)^2$
a sum of squares in $F$.
\end{Lemma}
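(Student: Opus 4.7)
The plan is to upgrade ${\rm D}_{\mathcal{S}_\infty(F)}$ to the uniform version ${\rm UD}_{\mathcal{S}_\infty(F)}$ by means of Proposition \ref{prop:D_implies_UD}, and then invoke Artin's characterization of totally non-negative elements as sums of squares. This works because $\mathcal{S}_\infty(F)=\mathcal{S}_\infty^{=(1,1)}(F,t_\infty,1)$ is compact (Remark \ref{rem:examples_compact}, Lemma \ref{lem:SFcompact}), and because an odd-degree polynomial has a zero in every real closed field by the intermediate value theorem, so the UD hypothesis on the existence of zeros in closures is satisfied at every ordering of $F$.

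If $F$ is not formally real, then $\mathcal{S}_\infty(F)=\emptyset$ and $-1$ is already a sum of squares in $F$; combined with the identity $y=\bigl(\tfrac{y+1}{2}\bigr)^2-\bigl(\tfrac{y-1}{2}\bigr)^2$, this makes every element of $F$ a sum of squares, so any $x$ works. We may thus assume that $F$ is formally real. Write $g=c\tilde g$ where $c\in F^\times$ is the leading coefficient of $g$ and $\tilde g$ is monic of odd degree. Applying ${\rm UD}_{\mathcal{S}_\infty(F)}$ to $\tilde g$ and to $a=c^{-1}\varepsilon$ produces $x\in F$ such that $1-\tilde g(x)^2a^{-2}\in\mathcal{O}_\mfP$, i.e.\ $a^2-\tilde g(x)^2\geq_\mfP 0$, for every ordering $\mfP$ of $F$. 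Multiplying through by the totally positive square $c^2$ yields $\varepsilon^2-g(x)^2\geq_\mfP 0$ in every ordering $\mfP$ of $F$. By the Artin--Schreier theorem, a totally non-negative element of a formally real field is a sum of squares, so $\varepsilon^2-g(x)^2$ is a sum of squares in $F$ as required.

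There is no substantial obstacle in this argument; the mild subtleties to keep track of are the reduction to a monic polynomial (and ensuring it still has odd degree, hence a root in every real closure), and the separation of the formally real and non-formally-real cases. The conceptual content is entirely packaged in Proposition \ref{prop:D_implies_UD} together with Artin's theorem.
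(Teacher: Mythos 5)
Your proof is correct and follows essentially the same route as the paper: upgrade ${\rm D}_{\mathcal{S}_\infty(F)}$ to ${\rm UD}_{\mathcal{S}_\infty(F)}$ (via Proposition \ref{prop:DimpliesUD}, or equivalently compactness and Proposition \ref{prop:D_implies_UD}), use that an odd-degree polynomial has a root in every real closure, and identify $R_\infty(F)$ with the set of sums of squares by Artin's theorem. Your explicit reduction to the monic case and your separate treatment of the non-formally-real case are details the paper's proof glosses over, but they do not change the argument.
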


\begin{proof}
By Proposition \ref{prop:DimpliesUD}, ${\rm D}_{\mathcal{S}_\infty(F)}$ implies
${\rm UD}_{\mathcal{S}_\infty(F)}$.
Since $g$ has a zero in every real closure of $F$,
${\rm UD}_{\mathcal{S}_\infty(F)}$
gives an $x\in F$ with $1-g(x)^2\varepsilon^{-2}\in R_\infty(F)$.
As $R_\infty(F)$ consists of the sums of squares of $F$,
we get that $\varepsilon^2-g(x)^2$ is a sum of squares.
\end{proof}

\begin{Lemma}\label{lemmaFindingPolyAndEpsilon}
  Let $s \geq 2$ and
  let $v$ be a valuation on $F$ whose residue field $Fv$ has level $s(Fv)\geq s$.
  Let $g\in\mathcal{O}_v[X]$ for which the reduction
  $\overline{g}\in Fv[X]$ has no root in $Fv$.
  Then 
  for no $x\in F$ and $\varepsilon \in F^\times$ with $v(\varepsilon)>0$
  is $\varepsilon^2 - g(x)^2$ a sum of $s-1$ squares in $F$.
\end{Lemma}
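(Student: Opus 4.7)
The plan is to argue by contradiction. Suppose there exist $x\in F$, $\varepsilon\in F^\times$ with $v(\varepsilon)>0$, and $y_1,\dots,y_{s-1}\in F$ with $\varepsilon^2 - g(x)^2 = \sum_{i=1}^{s-1} y_i^2$, which rearranges to
\[
g(x)^2 + y_1^2 + \cdots + y_{s-1}^2 = \varepsilon^2.
\]
Writing $z_0:=g(x)$ and $z_i:=y_i$ for $i\geq 1$, the idea is to divide this identity by $z_j^2$ for an index $j$ with $v(z_j)$ minimal, and then reduce modulo the maximal ideal $\mathfrak{m}_v$ to exhibit $-1$ as a sum of $s-1$ squares in $Fv$, contradicting $s(Fv)\geq s$.

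The preparatory step is to verify that $m:=\min_{0\leq i\leq s-1}v(z_i)$ is strictly less than $v(\varepsilon)$. If not, then $v(z_i)\geq v(\varepsilon)>0$ for every $i$, so in particular $v(g(x))>0$. When $v(x)\geq 0$ this forces $\overline{g}(\overline{x})=0$, contradicting the hypothesis that $\overline g$ has no root in $Fv$; when $v(x)<0$, a dominant-term analysis (using that the leading coefficient of $g$ is a unit in $\mathcal{O}_v$, implicit in $\overline g$ having the expected degree) yields $v(g(x))=(\deg g)\cdot v(x)\leq 0$, again a contradiction.

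With $m<v(\varepsilon)$ in hand, pick $j$ attaining the minimum and divide the relation by $z_j^2$ to obtain
\[
\sum_{i=0}^{s-1}(z_i/z_j)^{2}=(\varepsilon/z_j)^{2}.
\]
By the choice of $j$, every $z_i/z_j$ lies in $\mathcal{O}_v$; the summand with $i=j$ equals $1$; and $v(\varepsilon/z_j)=v(\varepsilon)-m>0$, so the right-hand side vanishes upon reduction. Reducing modulo $\mathfrak{m}_v$ therefore yields
\[
1+\sum_{i\neq j}\overline{(z_i/z_j)}^{\,2}=0 \quad\text{in } Fv,
\]
which expresses $-1$ as a sum of $s-1$ squares in $Fv$, contradicting $s(Fv)\geq s$. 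The only subtle point is the preparatory inequality $m<v(\varepsilon)$; once it is established, the division-and-reduction argument proceeds mechanically.
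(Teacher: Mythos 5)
Your proof is correct and is essentially the paper's argument: the paper's one-line proof (``look for terms of lowest valuation on the right-hand side\dots no non-trivial sum of $s$ squares in $Fv$ is zero'') is exactly your divide-by-the-term-of-minimal-valuation-and-reduce computation, with $-1$ emerging as a sum of the remaining $s-1$ residue squares. You are in fact slightly more careful than the paper, which asserts $v(g(x)^2)\le 0$ ``by the assumption on $\overline g$'' without addressing the case $v(x)<0$; as you observe, that case requires the leading coefficient of $g$ to be a unit, an implicit hypothesis that does hold in the lemma's application ($g=X^3-a$).
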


\begin{proof}
  Suppose for contradiction that $\varepsilon^2 = g(x)^2 + y_1^2 + \dotsb + y_{s-1}^2$, and look for terms of lowest valuation on the right-hand side.
  Since $v(\varepsilon^2)>0\geq v(g(x)^2)$ by the assumption on $\overline{g}$, and no non-trivial sum of $s$ squares in $Fv$ is zero, we obtain a contradiction.
\end{proof}

\begin{Corollary}
The field $K_\infty$ is dense in all its real closures
but has an elementary extension which is not.
\end{Corollary}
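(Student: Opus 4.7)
The first assertion is immediate from Proposition~\ref{prp:dense.in.real.closure}: since $K_\infty\subseteq\mathbb{R}$, its unique ordering is archimedean and hence admits no non-trivial convex valuation, so the denseness criterion is vacuously satisfied.

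For the second assertion, my plan is to form an ultrapower $K_\infty^* := \prod_{i\in\mathbb{N}} K_\infty/\mathcal{U}$ with respect to a non-principal ultrafilter $\mathcal{U}$ on $\mathbb{N}$ and to violate ${\rm D}_{\mathcal{S}_\infty(K_\infty^*)}$ by way of Lemma~\ref{lem:Dsumsofsquares}. For each $i$, hilbertianness of $k_i := K_\infty u_i$ furnishes a monic irreducible cubic in $k_i[X]$; lifting its coefficients yields a monic cubic $g_i \in \mathcal{O}_{u_i}[X]$ whose reduction has no root in $k_i$. Since $k_i$ has positive level while $K_\infty$ is formally real, the valuation $u_i$ is non-trivial, so I may pick $\varepsilon_i \in K_\infty^\times$ with $u_i(\varepsilon_i) > 0$. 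Applying Lemma~\ref{lemmaFindingPolyAndEpsilon} with $s = s(k_i) \geq 4^i$ then shows that no element of the form $\varepsilon_i^2 - g_i(x)^2$, $x \in K_\infty$, is a sum of at most $4^i - 1$ squares in $K_\infty$.

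Setting $g^* := [g_i]$, which is a monic cubic in $K_\infty^*[X]$, and $\varepsilon^* := [\varepsilon_i] \in (K_\infty^*)^\times$, I will argue that $(\varepsilon^*)^2 - g^*(x^*)^2$ fails to be a sum of squares in $K_\infty^*$ for every $x^* \in K_\infty^*$. Indeed, if for some $x^* = [x_i]$ this element equalled $\sum_{j=1}^n (y_j^*)^2$ with $y_j^* = [y_{i,j}]$, then by \L os's theorem the analogous identity would hold in $K_\infty$ on some index set $A \in \mathcal{U}$; since $\mathcal{U}$ is non-principal, $A$ contains an $i$ with $4^i > n + 1$, contradicting the previous paragraph. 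Lemma~\ref{lem:Dsumsofsquares} then yields failure of ${\rm D}_{\mathcal{S}_\infty(K_\infty^*)}$, while $K_\infty^*$ is an elementary extension of $K_\infty$ by construction. The only mildly delicate step is invoking hilbertianness of $k_i$ to supply the polynomial $\bar{g}_i$; the rest is a clean \L os argument exploiting the unbounded growth of the levels $s(k_i)$ arranged in the preceding proposition.
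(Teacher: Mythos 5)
Your proof is correct and follows essentially the same route as the paper's: the witnesses $(g_i,\varepsilon_i)$ produced via hilbertianity of $k_i$ and Lemma~\ref{lemmaFindingPolyAndEpsilon}, combined with Lemma~\ref{lem:Dsumsofsquares}, are exactly the paper's ingredients. The only (cosmetic) difference is that you realize the relevant type by a diagonal element in a non-principal ultrapower via \L os's theorem, whereas the paper notes that the formulas $\varphi_n(a,\varepsilon)$ form a finitely satisfiable type and realizes it in an $\aleph_0$-saturated elementary extension.
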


\begin{proof}
As all orderings of $K_\infty$ are archimedean,
it is dense in its real closures.

Let $\varphi_n(a,\varepsilon)$ be a formula expressing
that $\varepsilon\neq 0$ and with $g_a=X^3-a$, there is no $x$ such that $\varepsilon^2-g_a(x)^2$
is a sum of $4^n-1$ squares.
Then $\{\varphi_1,\dots,\varphi_n\}$ is satisfiable in $K_\infty$:
As $K_\infty u_n$ is hilbertian there exists $a\in\mathcal{O}_{u_n}$
such that $X^3-\bar{a}$ has no zero in $K_\infty u_n$.
By Lemma \ref{lemmaFindingPolyAndEpsilon}, for every $\varepsilon\in K_\infty^\times$ with $u_n(\varepsilon)>0$, $K_\infty\models\varphi_n(a,\varepsilon)$.

Therefore, if $K_\infty\prec K^*$ is an $\aleph_0$-saturated extension,
$\{\varphi_1,\varphi_2,\dots\}$ is realized in $K^*$,
i.e.~there exists $0\neq\varepsilon\in K^*$ and $a\in K^*$
such that $\varepsilon^2-f_a(x)^2$ is not a sum of squares for any $x$.
By Lemma \ref{lem:Dsumsofsquares} this shows that $K^*$ is not dense in all its real closures.
\end{proof}

\section*{Acknowledgements}

Part of this work was done while all three authors were guests of the Institut Henri Poincar\'{e}. 
The authors would like to thank the IHP for funding and hospitality,
and the organizers of the trimester `Model theory, combinatorics and valued fields’ for the invitation.

S.~A.~was supported by the Leverhulme Trust under grant RPG-2017-179.
P.~D.~was supported by KU Leuven IF C14/17/083.
A.~F.~was funded by the Deutsche Forschungsgemeinschaft (DFG) - 404427454.

\end{document}